\DeclareMathOperator{\dist}{dist}
\DeclareMathOperator{\Hull}{Hull}
\DeclareMathOperator{\card}{card}
\DeclareMathAlphabet{\mathpzc}{OT1}{pzc}{m}{it}
\begin{document}

\newtheorem{theorem}{Theorem}[section]
\newtheorem{result}[theorem]{Result}
\newtheorem{fact}[theorem]{Fact}
\newtheorem{conjecture}[theorem]{Conjecture}
\newtheorem{lemma}[theorem]{Lemma}
\newtheorem{proposition}[theorem]{Proposition}
\newtheorem{corollary}[theorem]{Corollary}
\newtheorem{facts}[theorem]{Facts}
\newtheorem{props}[theorem]{Properties}
\newtheorem*{thmA}{Theorem A}
\newtheorem{ex}[theorem]{Example}
\theoremstyle{definition}
\newtheorem{definition}[theorem]{Definition}
\newtheorem{remark}[theorem]{Remark}
\newtheorem{example}[theorem]{Example}
\newtheorem*{defna}{Definition}

\newcommand{\notes} {\noindent \textbf{Notes.  }}
\newcommand{\defn} {\noindent \textbf{Definition.  }}
\newcommand{\defns} {\noindent \textbf{Definitions.  }}
\newcommand{\x}{{\bf x}}
\newcommand{\e}{\epsilon}
\renewcommand{\d}{\delta}
\renewcommand{\a}{\alpha}
\newcommand{\z}{{\bf z}}
\newcommand{\B}{{\bf b}}
\newcommand{\V}{{\bf v}}
\newcommand{\T}{\mathbb{T}}
\newcommand{\Z}{\mathbb{Z}}
\newcommand{\Hp}{\mathbb{H}}
\newcommand{\D}{\Delta}
\newcommand{\g}{\gamma}
\newcommand{\G}{\Gamma}
\newcommand{\R}{\mathbb{R}}
\newcommand{\N}{\mathbb{N}}
\renewcommand{\B}{\mathbb{B}}
\renewcommand{\S}{\mathbb{S}}
\newcommand{\C}{\mathbb{C}}
\newcommand{\ddist}{\text{dist}^*}
\newcommand{\ft}{\widetilde{f}}
\newcommand{\dt}{{\mathrm{det }\;}}
 \newcommand{\adj}{{\mathrm{adj}\;}}
 \newcommand{\0}{{\bf O}}
 \newcommand{\av}{\arrowvert}
 \newcommand{\zbar}{\overline{z}}
 \newcommand{\xbar}{\overline{X}}
 \newcommand{\htt}{\widetilde{h}}
\newcommand{\ty}{\mathcal{T}}
\newcommand\diam{\operatorname{diam}}
\newcommand\Mod{\operatorname{Mod}}
\renewcommand\Re{\operatorname{Re}}
\renewcommand\Im{\operatorname{Im}}
\newcommand{\tr}{\operatorname{Tr}}
\renewcommand{\skew}{\operatorname{skew}}

\newcommand{\ds}{\displaystyle}
\numberwithin{equation}{section}

\renewcommand{\theenumi}{(\roman{enumi})}
\renewcommand{\labelenumi}{\theenumi}

\newcommand{\vyron}[1]{{\scriptsize \color{blue}\textbf{Vyron's note:} #1 \color{black}\normalsize}}
\newcommand{\alastair}[1]{{\scriptsize \color{red}\textbf{Alastair's note:} #1 \color{black}\normalsize}}

\setcounter{tocdepth}{1}

\title{Decomposing multitwists}

\author{Alastair N. Fletcher}
\email{fletcher@math.niu.edu}
\address{Department of Mathematical Sciences, Northern Illinois University, Dekalb, IL 60115, USA}

\author{Vyron Vellis}
\email{vvellis@utk.edu}
\address{Department of Mathematics, The University of Tennessee, Knoxville, TN 37966, USA}

\thanks{A.F. was supported by a grant from the Simons Foundation, \#352034. V.V. was partially supported by the NSF DMS grant 1952510.}
\subjclass[2020]{Primary 30L05; Secondary 30F45, 37C10}
\keywords{uniformly disconnected set, bi-Lipschitz decomposition, hyperbolic surface of infinite type, bi-Lipschitz path}

\maketitle

\begin{abstract}
The Decomposition Problem in the class $LIP(\S^2)$ is to decompose any bi-Lipschitz map $f:\S^2 \to \S^2$ as a composition of finitely many maps of arbitrarily small isometric distortion. In this paper, we construct a decomposition for certain bi-Lipschitz maps which spiral around every point of a Cantor set $X$ of Assouad dimension strictly smaller than one. These maps are constructed by considering a collection of Dehn twists on the Riemann surface $\S^2 \setminus X$. The decomposition is then obtained via a bi-Lipschitz path which simultaneously unwinds these Dehn twists. As part of our construction, we also show that $X \subset \S^2$ is uniformly disconnected if and only if the Riemann surface $\S^2 \setminus X$ has a pants decomposition whose cuffs have hyperbolic length uniformly bounded above, which may be of independent interest.
\end{abstract}


\section{Introduction}

A bi-Lipschitz homeomorphism $f:X\to Y$ between metric spaces is a homeomorphism that roughly preserves absolute distances; specifically, there exists $L\geq 1$ such that
\[ L^{-1} d_X(x,y) \leq d_Y( f(x) , f(y) ) \leq Ld_X(x,y)\]
for all $x,y\in X$. We then say that $f$ is an $L$-bi-Lipschitz map. The smallest such constant $L$ is called the {\it isometric distortion} of $f$. Letting $\S^n$ be the sphere of dimension $n$, we denote by $LIP(\S^n)$ the class of orientation preserving homeomorphisms of $\S^n$. 

A central problem in bi-Lipschitz geometry is whether a bi-Lipschitz map can be decomposed into bi-Lipschitz mappings of arbitrarily small isometric distortion.

\begin{conjecture}[Decomposition Problem]
Let $n\geq 1$ and let $f\in LIP(\S^n)$. Then for every $\epsilon >0$ we can find homeomorphisms $f_k \in LIP(\S^n)$, for $k=1,\ldots, m$, such that $f$ can be written as a composition $f = f_m \circ \ldots \circ f_1$, where each $f_k$ has isometric distortion at most $1+\epsilon$.
\end{conjecture}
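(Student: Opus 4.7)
The plan is to separate the analytic obstructions (non-isometric local stretching) from the topological obstructions (twisting and winding), tackling each through a different mechanism. For the base case $n=1$, an orientation-preserving $L$-bi-Lipschitz homeomorphism of $\S^1$ lifts to an increasing $L$-bi-Lipschitz map on $\R$ commuting with integer translation; linear interpolation in the lift between the identity and the lift of $f$ produces a family $f_t$ with $f_0=\mathrm{id}$ and $f_1=f$, and partitioning $[0,1]$ into $m$ equal steps yields a composition $f=f_m\circ\cdots\circ f_1$ whose individual distortions tend to $1$ as $m\to\infty$.

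For $n\geq 2$, I would proceed in two stages. First, reduce to small supports by taking a triangulation $\mathcal{T}$ of $\S^n$ of mesh much smaller than a modulus of continuity for both $f$ and $f^{-1}$, then write $f$ as a composition of bi-Lipschitz homeomorphisms, each supported in a bounded neighborhood of a single simplex, by matching images to preimages one simplex at a time. The bi-Lipschitz constants of the factors are controlled by $L$ and by the geometry of $\mathcal{T}$, and crucially each factor is the identity outside a small region. Second, each locally-supported factor can be further decomposed: after rescaling it is close to an affine map, and one applies the polar decomposition of its linear part, interpolating separately in the compact group $SO(n)$ and in the cone of symmetric positive-definite matrices, exploiting convexity of the latter to keep individual distortions arbitrarily close to $1$.

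The main obstacle, and the reason the conjecture is far from trivial, is that this scheme fails in every neighborhood of a set around which $f$ winds infinitely often, since no rescaling produces an approximately affine map there. The present paper confronts exactly this difficulty for multitwists around a Cantor set $X\subset\S^2$ of Assouad dimension strictly less than one, using the hyperbolic structure of $\S^2\setminus X$ and pants decompositions with uniformly bounded cuff lengths to construct a bi-Lipschitz path that simultaneously unwinds all of the Dehn twists. A complete proof of the conjecture would require extending this approach to arbitrary closed obstruction sets: identifying the topological twisting encoded by $f$ on the complement, covering it by appropriate surface-type decompositions, producing an unwinding bi-Lipschitz path globally, and then removing the Assouad dimension hypothesis. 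The hardest part is this last step, since the bound on dimension is used both to produce a nice pants decomposition and to control the geometry of the unwinding path.
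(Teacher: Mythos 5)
The statement you were asked to address is labelled as a \emph{conjecture} in the paper, and the paper does not prove it; it proves only a special case (Theorem~\ref{thm:main1}) for certain multitwist maps supported on ring domains around a Cantor set of small Assouad dimension. Your submission is, accordingly, a program sketch rather than a proof, and you are candid about this in the final paragraph. That candor is correct, but it means there is no proof here to certify.

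A few specific remarks. Your $n=1$ argument is sound: lifting to $\R$, linearly interpolating $f_t=(1-t)\,\mathrm{id}+t\tilde f$, and then taking $m$ equal steps does give factors with derivative ratio $1+O(L^2/m)$, hence distortion tending to $1$. This is a genuine alternative to the paper's factorization via $f_1(x)=\int_{x_0}^x |f'(t)|^{\lambda}\,dt$; both work, and yours has the advantage of producing a bi-Lipschitz path rather than a two-step factorization, which is closer in spirit to the strategy the paper uses in higher dimensions.

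For $n\geq 2$, however, the core step of your plan --- ``after rescaling it is close to an affine map,'' followed by polar decomposition and interpolation in $SO(n)$ and the positive-definite cone --- requires differentiability at the point being blown up, together with uniform control over the scale at which the affine approximation becomes good. A general bi-Lipschitz map is differentiable only almost everywhere, and the paper's central class of examples (maps spiralling around every point of a Cantor set) is non-differentiable at every point of that set, so the rescaling never converges to an affine map there. Your triangulation-and-local-support reduction does not escape this: however fine the mesh, some simplices will contain points of non-differentiability, and the resulting factors inherit the full winding. In effect the plan, if it could be completed, would recover the $C^1$ result of \cite{FM}, which is already known, and would not touch the genuinely hard part. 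You identify this obstruction correctly, but identifying the obstruction is where the paper's actual work begins (Theorem~\ref{thm:UD-hyper}, Proposition~\ref{prop:1}, the bi-Lipschitz path machinery of Section~\ref{sec:BLpaths}), and none of that is reproduced or replaced in the proposal. So the bottom line is: the $n=1$ case is fine, the general case is not proved, and indeed it is not proved in the paper either --- it remains open.
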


The case $n=1$ is elementary: suppose $I,J$ are intervals in $\R$ and $f:I\to J$ is an $L$-bi-Lipschitz map. Then $f$ can be written as $f = f_2\circ f_1$, where
\[ f_1(x) = \int_{x_0}^x |f'(t) |^{\lambda} \: dt,\]
$x_0$ is fixed, $\lambda = \log_L \alpha$, $f_1$ is $\alpha$-bi-Lipschitz and $f_2 = f\circ f_1^{-1}$ is $L/\alpha$-bi-Lipschitz.

However, for $n\geq 2$, the Decomposition Problem has been so far elusive. It is clear that affine bi-Lipschitz mappings can be factored into affine mappings of small isometric distortion, but beyond this, only certain specific examples have been considered. Freedman and He \cite{FrHe} studied the logarithmic spiral map $s_k(z) = ze^{ik \log |z| }$, which is an $L$-bi-Lipschitz map with $|k| = L-1/L$. Gutlyanskii and Martio \cite{GuMa} studied a related class of mappings in dimension $2$, and generalized this to a class of volume preserving bi-Lipschitz automorphisms of the unit ball $\B^3$ in three dimensions.

Although in this paper we focus on $LIP(\S^2)$, the Decompsition Problem can also be asked for the class of quasiconformal homeomorphisms of $\S^n$. In dimension $2$, the fact that every quasiconformal map arises as a solution of the Beltrami equation can be leveraged to show that the Decomposition Problem has a positive solution here; see \cite[Theorem 4.7]{Lehto}. Since every orientation preserving bi-Lipschitz map is also quasiconformal, in dimension $2$ we are able to find a decomposition of bi-Lipschitz maps, but only into quasiconformal maps of small conformal distortion. Observe, however, that quasiconformal maps need not be bi-Lipschitz.

A similar problem was studied by the first named author and Markovic in \cite{FM}. There it was shown that $C^1$ diffeomorphisms of $\S^n$, for $n\geq 2$, can be decomposed into bi-Lipschitz maps of arbitrarily small isometric distortion. This solves the Decomposition Problem for $C^1$ bi-Lipschitz maps, but of course, bi-Lipschitz maps are only guaranteed to be differentiable almost everywhere.

In this paper, we study the Decomposition Problem for a class of maps in $LIP(\S^2)$ which spiral around every point of a Cantor set, with small Assouad dimension; see below for definitions. Necessarily these maps are not differentiable at any point of the Cantor set in question. This can be viewed as a generalization of the result of Freedman and He, although they were motivated to give estimates on the number of maps required in the decomposition. Our constructions will be involved enough that we will not address this question here, and be content to just find a decomposition.

Maps which spiral around every point of a Cantor set simultaneously are not new. Such mappings  were constructed by Astala et al in \cite{AIPS} in order to give sharp examples of the multifractal spectrum; see in particular the proof of Theorem 5.1 and Figure 7 in \cite{AIPS}.

\subsection{Uniformly disconnected sets and hyperbolic geometry}

We identify the topological sphere $\S^2$ with the one point compactification $\R^2 \cup \{ \infty \}$, and equip it with the chordal metric. If $X \subset \S^2$ is a Cantor set, then by applying a chordal isometry we may assume that $X\subset \R^2$. Having done this, we may then view $S:= \S^2 \setminus X$ as a Riemann surface of infinite type. 

The bi-Lipschitz maps that we will decompose arise from a collection of Dehn twists on the surface $S$. For the mappings we define to be bi-Lipschitz, we need some control on the ring domains on which the Dehn twists are defined. Informally, these ring domains cannot be too thin, and their boundaries cannot be too wiggly.

To address the first of these points, we recall some hyperbolic geometry. The surface $S$ has a pants decomposition, that is, $S = \bigcup_{i=1}^{\infty} P_i$, where each $P_i$ is a topological sphere with three disks removed. 
The collection of boundary curves of the pairs of pants, called the \emph{cuffs} of the decomposition, may be enumerated by $(\alpha_j)_{j=1}^{\infty}$. 
Each $\alpha_j$ is a simple closed curve on $S$ and generates a class $[\alpha_j]$ of simple closed curves that are freely homotopic to $\alpha_j$.

We denote by $\ell_S (\alpha_j)$ the hyperbolic length of $\alpha_j$ and by $\ell_S[\alpha_j]$ the infimum of hyperbolic lengths of closed curves in $S$ homotopic to $\alpha_j$. We suppress the subscript $S$ if the context is clear. It is well-known \emph{a Cantor set $X\subset\R^2$ is uniformly perfect if and only if for any pants decomposition of $\mathbb{S}^2\setminus X$, the associated cuffs $(\alpha_j)_{j=1}^{\infty}$ satisfy $\inf_j \ell_S[ \alpha_j] >0$}; see \cite{Po79}.
Recall that a non-degenerate metric space $X$ is \emph{uniformly perfect} if there exists a constant $C>1$ such that for any $x\in X$ and every positive $r< \diam{X}$, we have that $B(x,r)\setminus B(x,r/C) \neq \emptyset$. Informally, this means that any ring domain separating $X$ cannot be too thick.

Uniform disconnectedness is, in a sense, the opposite of uniform perfectness; a metric space $X$ is \emph{uniformly disconnected} if there exists a constant $C>1$ such that for any $x\in X$ and every positive $r< \diam{X}$, there exists $X_{x,r} \subset X$ that contains $x$ such that $\diam{X_{x,r}}\leq r$ and 
\[ \dist(X_{x,r}, X\setminus X_{x,r})  \geq r/C.\]
It is natural to ask whether $X$ being uniformly disconnected implies analogous geometric properties of the surface $S$. Our first result gives such a characterization.

\begin{theorem}\label{thm:UD-hyper}
A Cantor set $X\subset \R^2$ is uniformly disconnected if and only if there exists a pants decomposition for $S=\mathbb{S}^2\setminus X$ such that the associated cuffs $(\alpha_j)_{j=1}^{\infty}$ satisfy $\sup_j \ell [ \alpha_j ] < \infty$.
\end{theorem}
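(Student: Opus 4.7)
My plan uses the classical dictionary between the hyperbolic length of a simple closed geodesic on $S$ and the modulus of an embedded annular neighborhood of it. The collar lemma provides, for any closed geodesic $\alpha$ on $S$ of hyperbolic length $\ell$, an embedded annulus of modulus at least $\phi(\ell)>0$, where $\phi$ is strictly decreasing with $\phi(\ell)\to\infty$ as $\ell\to 0$. Conversely, every embedded annulus $A\subset S$ homotopic to $\alpha$ lifts to the hyperbolic annulus of modulus $\pi/\ell$ corresponding to the cyclic cover and hence has modulus at most $\pi/\ell$. Consequently $\sup_j\ell[\alpha_j]<\infty$ is equivalent to the existence of a uniform lower bound $m_0>0$ on the moduli of embedded annuli in $S$ homotopic to the cuffs. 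I shall also use the planar Teichm\"uller--Gr\"otzsch inequality: an annulus $A\subset\R^2$ of modulus at least $m_0$ separating disjoint compact sets $E_1$ (inside) and $E_2$ (outside) satisfies $\diam E_1\leq K(m_0)\dist(E_1,E_2)$ for some finite $K(m_0)$.

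\emph{Forward direction.} Assume $X$ is $C$-uniformly disconnected. Standard arguments yield a refining sequence of clopen partitions $\mathcal{P}_k$ of $X$ with each piece of diameter at most $\lambda^k\diam X$ and distinct pieces of $\mathcal{P}_k$ pairwise separated by at least $c\lambda^k\diam X$, for constants $\lambda\in(0,1)$ and $c>0$ depending only on $C$. For each non-singleton piece $Y\in\mathcal{P}_k$ of diameter $r$, I enclose $Y$ by a smooth Jordan curve $\gamma_Y\subset\R^2\setminus X$ chosen inside the strip $\{z:r<\dist(z,Y)<(1+c/2)r\}$, so that $\gamma_Y$ bounds a topological disk containing $Y$ and is surrounded by an annulus in $\R^2\setminus X$ whose modulus is bounded below by a positive constant depending only on $C$. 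By the nested-partition structure these Jordan curves are pairwise disjoint across levels and pieces; after inserting auxiliary cuffs (boundedly many per cluster) to make every complementary region in $S$ a three-holed sphere, they assemble into a pants decomposition of $S$. The dictionary then gives $\sup_j\ell[\alpha_j]<\infty$.

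\emph{Backward direction.} Assume $\sup_j\ell[\alpha_j]\leq L<\infty$, so each cuff carries an embedded annulus $A_j\subset S$ of modulus at least $m_0=\phi(L)$. The dual graph of the pants decomposition is a trivalent tree with ends in bijection with points of $X$. Given $x\in X$ and $0<r<\diam X$, the ray in the dual tree from the end at $x$ passes through successive cuffs $\alpha_{j_1},\alpha_{j_2},\ldots$ bounding nested disks $D_1\supsetneq D_2\supsetneq\cdots\ni x$ with $\diam(D_k\cap X)\to 0$. Let $k$ be the least index with $\diam(D_k\cap X)\leq r$, set $X_{x,r}:=D_k\cap X$, and let $\beta$ be the third cuff of the pair of pants between $\alpha_{j_{k-1}}$ and $\alpha_{j_k}$, bounding the cluster $X_\beta=(D_{k-1}\setminus D_k)\cap X$. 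Noting $X\setminus X_{x,r}=(X\setminus D_{k-1})\cup X_\beta$, the planar modulus estimate applied to $A_{j_{k-1}}$, $A_{j_k}$, and $A_\beta$ gives respectively $\dist(X_{x,r},X\setminus D_{k-1})\geq r/K(m_0)$, $\dist(X_{x,r},X\setminus X_{x,r})\geq\diam X_{x,r}/K(m_0)$, and $\dist(X_{x,r},X_\beta)\geq\diam X_\beta/K(m_0)$. Since $\diam(D_{k-1}\cap X)>r$ forces $\diam X_\beta+\dist(X_{x,r},X_\beta)>r-\diam X_{x,r}$, a brief case split (on whether $\diam X_{x,r}\geq r/2$, and otherwise on which of $\diam X_\beta$ and $\dist(X_{x,r},X_\beta)$ dominates) yields $\dist(X_{x,r},X\setminus X_{x,r})\geq r/C$ for some $C=C(L)$.

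The main obstacle is the scale-matching in the backward direction: consecutive cuffs along the ray from $x$ may produce cluster diameters that jump by an arbitrary factor, so the naive choice $X_{x,r}=D_k\cap X$ can be much smaller than $r$, and the separation lower bound must be recovered by combining modulus information from all three cuffs of the pair of pants between $\alpha_{j_{k-1}}$ and $\alpha_{j_k}$ simultaneously. A secondary, combinatorial obstacle in the forward direction is assembling the enclosing curves $\gamma_Y$ into a genuine pants decomposition: clusters with more than two sub-clusters at the next partition level force the introduction of auxiliary cuffs, and these must themselves be chosen in thick planar annuli so as to preserve the uniform modulus lower bound.
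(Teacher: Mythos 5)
Your backward direction (bounded cuff lengths imply uniform disconnectedness) is essentially the paper's argument: both use the collar lemma to extract disjoint thick planar annuli around the cuffs and then the Loewner/Teichm\"uller--Gr\"otzsch estimate to get the separation needed. The paper packages this through an intermediate symbolic characterization of uniform disconnectedness (its Lemma~3.3), while you argue directly along a ray of the dual tree; I checked your case split and it closes. Your forward direction, however, is a genuinely different strategy from the paper's. The paper does not build cuffs by hand: it quasiconformally embeds $X$ into the ternary Cantor set $\mathcal{C}$ (via the cited extension result of Vellis), takes the uniform pants decomposition of $\S^2\setminus\mathcal{C}$, and transfers the cuff-length bound back using Wolpert's inequality together with the subordination principle for hyperbolic metrics.

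There is a genuine gap in your forward direction as written. The Jordan curve $\gamma_Y$ is placed in the strip $\{z: r<\dist(z,Y)<(1+c/2)r\}$ with $r=\diam Y$. Uniform disconnectedness only guarantees $\dist(Y,X\setminus Y)\geq c\lambda^k\diam X$, while $\diam Y$ may be as large as $\lambda^k\diam X$, so the strip lies inside $\R^2\setminus X$ only if $c>2$; in general it can meet $X\setminus Y$, and the same scale mismatch threatens the pairwise disjointness of the $\gamma_Y$ across levels. More seriously, even after choosing the scale correctly, the existence of a separating ring domain in $\R^2\setminus X$ of modulus bounded below by a constant depending only on the UD constant does \emph{not} follow from the bound on $\dist(Y,X\setminus Y)/\diam Y$ alone. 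For instance, if $Y=\{-1,0,1\}$ and $X\setminus Y\supset\{\tfrac12\pm i\epsilon\}$, any ring domain separating them must thread the gap of width $2\epsilon$, so its modulus tends to $0$ as $\epsilon\to 0$ even though $\dist(Y,X\setminus Y)/\diam Y$ stays bounded below; such interleaving configurations are compatible with uniform disconnectedness. To make your approach work one would need to exploit the freedom in choosing the clopen pieces $Y$ (and the auxiliary cuffs) so that each admits a round or square-thickened annular collar at the separation scale, and then verify nesting; none of this is automatic and it is precisely what the paper's route through $\mathcal{C}$ and Wolpert's inequality avoids.
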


By a uniformization theorem of David and Semmes \cite{DSbook}, a set $X \subset \R^2$ is quasisymmetrically homeomorphic to the standard ternary Cantor set $\mathcal{C}$ if and only if it is compact, uniformly perfect, and uniformly disconnected. Therefore, by Theorem \ref{thm:UD-hyper} and \cite{Po79}, it follows that a Cantor set $X\subset \R^2$ is quasisymmetrically homeomorphic to $\mathcal{C}$ if and only if there exists a constant $C>1$ and a pants decomposition for $S=\mathbb{S}^2\setminus X$ such that the associated cuffs $(\alpha_j)_{j=1}^{\infty}$ satisfy
\[ C^{-1} \leq \ell [ \alpha_j ] \leq C, \qquad\text{for all $j$}.\]

\subsection{Dehn multi-twists}

Here we outline how our bi-Lipschitz mappings are constructed. Full definitions and discussion will follow in the sequel. The first step is the following proposition which is a corollary of Theorem \ref{thm:UD-hyper}. 


\begin{proposition}\label{prop:main}
Given $c\geq 1$, there exists $L>1$, $k\in\N$, and a finite set 
\[ \{g_i : \overline{B}(0,1)\setminus B(0,1-\tfrac1{L}) \to \R^2\}_{i=1}^{k}\] 
of $L$-bi-Lipschitz conformal maps with the following property. Let $X\subset \R^2$ be a $c$-uniformly disconnected Cantor set and let $(\alpha_j)_{j=1}^{\infty}$ be the cuffs from Theorem \ref{thm:UD-hyper}. There exist mutually disjoint closed ring domains $R_j \subset \R^2 \setminus X$ homotopic to $\a_j$, and similarities $(\phi_j)_{j=1}^{\infty}$ of $\R^2$ such that for each $j\in\N$ there exists $i(j)\in \{1,\dots,k\}$ with $R_j := \phi_j\circ g_{i(j)} (\overline{B}(0,1)\setminus B(0,1-\frac1{L}))$.  Moreover, for each $j\in\N$, the bounded component of $\phi_j^{-1}(R_j)$ has diameter equal to 1.
\end{proposition}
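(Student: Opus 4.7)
The plan is to use Theorem~\ref{thm:UD-hyper} to find a pants decomposition of $S$ whose cuffs can be represented by pairwise disjoint round Euclidean annuli of uniformly bounded modulus, and then to normalize each such annulus by a similarity so that it becomes the image of a single standard annulus under one fixed conformal map; in this way the finite collection of maps in the statement can be taken with $k=1$.

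First, we would apply Theorem~\ref{thm:UD-hyper} to obtain a pants decomposition of $S=\S^2\setminus X$ with cuffs $(\a_j)_{j=1}^\infty$ satisfying $\sup_j \ell[\a_j]\leq \Lambda$ for some $\Lambda=\Lambda(c)$. Tracing through the forward direction of that theorem, the pants decomposition will come from a tree of nested subsets of $X$ produced by iterated uniform disconnectedness: for each $j$ there is an associated splitting $X = X_j\sqcup (X\setminus X_j)$ and a round Euclidean ball $B(x_j,s_j)$ containing $X_j$ such that $\overline{B(x_j,\lambda s_j)}$ is disjoint from $X\setminus X_j$, for some $\lambda=\lambda(c)>1$ depending only on $c$. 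The closed annulus
\[ A_j := \overline{B(x_j,\lambda s_j)}\setminus B(x_j,s_j) \]
then lies in $\R^2\setminus X$, is homotopic to $\a_j$ in $S$, and has modulus $\log\lambda$. The tree structure of the balls (either nested with a buffer between them or pairwise disjoint after $\lambda$-expansion) makes the collection $\{A_j\}$ pairwise disjoint.

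Next, we would fix $L\geq 2$ large enough that $L/(L-1)\leq \lambda$, and replace each $A_j$ by the concentric round sub-annulus
\[ R_j := \overline{B(x_j,\lambda s_j)}\setminus B\bigl(x_j,\,\lambda s_j(1-\tfrac1L)\bigr), \]
which has modulus exactly $\log(L/(L-1))$ and inherits the homotopy class and pairwise disjointness. Taking $k=1$, we would define the similarity
\[ g_1 : \overline{B}(0,1)\setminus B(0,1-\tfrac1L) \to \R^2, \qquad g_1(z) = \frac{z}{2(1-1/L)}. \]
This is conformal with scaling factor $L/(2(L-1))\in[1/2,1]$ for $L\geq 2$, hence is $L$-bi-Lipschitz. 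Its image is the round closed annulus of inner radius $1/2$ and outer radius $L/(2(L-1))$, whose bounded complementary component is the disk of diameter $1$ centered at the origin. Defining the similarity $\phi_j(z) = 2\lambda s_j(1-1/L)\,z + x_j$, a direct calculation gives $\phi_j\circ g_1(\overline{B}(0,1)\setminus B(0,1-\tfrac1L)) = R_j$, and the bounded complementary component of $\phi_j^{-1}(R_j) = g_1(\overline{B}(0,1)\setminus B(0,1-\tfrac1L))$ has diameter $1$, as required.

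The main obstacle in this plan is the first step: extracting from the proof of Theorem~\ref{thm:UD-hyper} a family of round balls whose $\lambda$-expansions are mutually well separated and enclose the correct pieces of $X$. Uniform disconnectedness alone only controls the ratio $c$ between the diameter and separation of a single piece $X_{x,r}$; since $c$ may be larger than $1$, this is not enough to produce a round Euclidean annulus around a single clump. One really has to use the hierarchical structure of nested clumps provided by the proof of Theorem~\ref{thm:UD-hyper} to pass to well-separated scales on which round annuli of modulus bounded below by a positive function of $c$ exist around every cuff, and then to check carefully that these annuli really can be chosen mutually disjoint.
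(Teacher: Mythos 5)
Your plan hinges on step one: extracting from the uniform disconnectedness of $X$ a family of \emph{round} Euclidean annuli, with modulus bounded below in terms of $c$ alone, separating the clumps of $X$ at every scale. This is exactly where the argument breaks, and you half-acknowledge this yourself in the final paragraph, but the acknowledged gap is not merely a matter of being careful about disjointness --- it is a genuine obstruction. A $c$-uniformly disconnected piece $X_{x,r}$ can be long and thin (say, spread along a near-segment of length comparable to $r$) while the remainder $X\setminus X_{x,r}$ comes within distance $r/c$ in a transverse direction; in that situation no round annulus of modulus bounded below by a positive constant encloses $X_{x,r}$ and excludes $X\setminus X_{x,r}$, and passing to a lower level of the hierarchy can reproduce the same aspect-ratio problem indefinitely (e.g.\ for a product of two Cantor sets of very different porosities). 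The paper itself flags this: right after Theorem~\ref{thm:main1} it notes that \emph{if} the rings can be chosen round then the hypothesis $\dim_A(X)<1$ can be dropped, which is precisely the statement that roundness is not automatic and is a stronger assumption than uniform disconnectedness. So reducing to $k=1$ with a single linear $g_1$ is not available.

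The paper's proof goes a different route that does not attempt roundness at all. Starting from Theorem~\ref{thm:UD-hyper}, Proposition~\ref{prop:collar} produces mutually disjoint (generally non-round) ring domains $R_j'$ around the cuffs with uniformly bounded modulus. Each $R_j'$ is normalized by a similarity $\zeta_j$, and then Lemma~\ref{lem:MM} (square thickening on a grid of mesh size controlled by the modulus bound and the Loewner estimate~\eqref{eq:Loewner}) is used to carve out polygonal ring domains $R_j''\subset\zeta_j^{-1}(R_j')$ whose boundary edges lie on a fixed grid in a fixed box; because there are only finitely many such polygonal configurations, this is what produces the finite list. Finally Lemma~\ref{lem:koebe} (Koebe distortion on an inner sub-annulus) shows that the Riemann map of each $R_j''$ restricted to a fixed sub-annulus is bi-Lipschitz with constant depending only on $c$, which gives the maps $g_i$. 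So the finiteness of $\{g_i\}$ is a consequence of the combinatorial finiteness of square thickenings, not of any roundness of the ring domains; your proposal would need to be rebuilt around something like this mechanism rather than around round annuli.
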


This proposition says that given a pants decomposition of $\S^2 \setminus X$, we can find a collection of rings on which our map $f$ will be supported with the property that, up to similarity, the rings are chosen from a finite set. This finiteness will lead to a certain uniformity in the Dehn twists that define $f$.

More precisely, fix a Cantor set $X \subset \R^2$ for which the Assouad dimension satisfies $\dim _A X <1$. It then follows from \cite{Lu} that $X$ is uniformly disconnected. Let $R_j$ and $f_j:= \phi_j\circ g_{i(j)}$ be the ring domains and conformal maps, respectively, from Proposition \ref{prop:main}. Then, a Dehn twist can be defined on each $\overline{R_j}$ by
\[ f|\overline{R_j} := f_j\circ \mathfrak{D} \circ f_j^{-1}\]
where $\mathfrak{D}: \overline{B}(0,1)\setminus B(0,1-\frac1{L}) \to \overline{B}(0,1)\setminus  B(0,1-\frac1{L})$ is the Dehn twist
\[ \mathfrak{D}(r,\theta) = (r,\theta + 2\pi L(1-r)).\]
Let $f:\R^2 \to \R^2$ be given by the Dehn twist in each $\overline{R_j}$ as above, and the identity elsewhere. The uniform bi-Lipschitz constant of maps $g_j$ guarantees that $f$ is a bi-Lipschitz map; see Lemma \ref{lem:BLmap}. The main theorem of this paper reads as follows:

\begin{theorem}
\label{thm:main1}
If $X \subset \R^2$ is a Cantor set with $\dim_A(X) <1$ and if $f$ is the bi-Lipschitz map defined above, then given $\epsilon >0$, there exists $N\in \N$ such that $f = f_N\circ \ldots \circ f_1$, where each $f_j$, for $j=1,\ldots, N$, is $(1+\epsilon)$-bi-Lipschitz.
\end{theorem}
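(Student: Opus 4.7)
My plan is to construct a continuous path $\{F_t\}_{t\in[0,1]}$ of bi-Lipschitz self-maps of $\R^2$ with $F_0=\mathrm{id}$ and $F_1=f$, and to extract the decomposition by partitioning $[0,1]$: given a partition $0=t_0<t_1<\cdots<t_N=1$ for which every increment $F_{t_i}\circ F_{t_{i-1}}^{-1}$ is $(1+\e)$-bi-Lipschitz, setting $f_i := F_{t_i}\circ F_{t_{i-1}}^{-1}$ yields $f=f_N\circ\cdots\circ f_1$.

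On each ring $R_j$, I would take the conjugated partial twist $F_t|_{R_j} := f_j\circ\mathfrak{D}_t\circ f_j^{-1}$ with $\mathfrak{D}_t(r,\theta)=(r,\theta+2\pi L(1-r)t)$. Two features make the on-ring estimate clean. First, partial twists compose additively, $\mathfrak{D}_t\circ\mathfrak{D}_s^{-1}=\mathfrak{D}_{t-s}$, and a direct computation gives $\mathfrak{D}_s$ bi-Lipschitz constant $1+O(|s|)$. Second, since $f_j$ is conformal and $L$-bi-Lipschitz, its derivative is pointwise a scalar multiple of a rotation; conjugation by such matrices preserves the singular values of $D\mathfrak{D}_{t-s}$ up to the small oscillation factor $|f_j'(\mathfrak{D}_{t-s}(q))|/|f_j'(q)|$, which is $1+O(|t-s|)$ by uniform continuity of $|g_i'|$ over the finite family in Proposition~\ref{prop:main}. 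Thus $F_{t_i}\circ F_{t_{i-1}}^{-1}|_{R_j}$ is $(1+\e)$-bi-Lipschitz for $|t_i-t_{i-1}|$ small, uniformly in $j$.

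The delicate part is extending $F_t$ to a global bi-Lipschitz map of $\R^2$. Since $\mathfrak{D}_t$ rotates the inner boundary of the model annulus by $2\pi t$ rather than fixing it, the naive extension is discontinuous along the inner boundary of each $R_j$ for $t\notin\Z$. To repair this, I would enumerate the components of $\R^2\setminus\bigcup_j R_j$ --- the singletons $\{x\}$ for $x\in X$ together with a hierarchy of open ``pair-of-pants'' regions $P$ organized into a rooted tree by nesting --- and on each $P$ of nesting depth $k(P)$ define $F_t|_P$ as a self-homeomorphism realizing the boundary shift $2\pi t k(P)$ simultaneously on every boundary circle of $P$. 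Since the shift differential across each $P$ vanishes, such self-maps exist; at $t=1$ all shifts lie in $2\pi\Z$, so boundary values revert to the identity and $F_1=f$.

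The main obstacle is uniformity of the pair-of-pants estimates: the step map on $P$ at depth $k$ induces boundary rotation $2\pi(t_i-t_{i-1})k\pmod{2\pi}$, and no single uniform mesh $1/N$ can keep this small for all $k$. I anticipate the hypothesis $\dim_A(X)<1$ --- strictly stronger than uniform disconnectedness --- entering precisely at this step: it should allow a scale-adapted reparametrization of the homotopy in which the twist on each ring is performed within a time window proportional to its diameter, so that only boundedly many depths are ``active'' at any instant. Combined with the finiteness of ring shapes from Proposition~\ref{prop:main} and the resulting bounded geometry of pair-of-pants components, this should yield uniform $(1+\e)$-bi-Lipschitz increments, completing the decomposition.
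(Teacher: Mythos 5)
You correctly identify the framework (construct a bi-Lipschitz path from the identity to $f$ and discretize it), and your on-ring estimate via conjugated partial twists and Koebe control over the finitely many conformal shapes from Proposition~\ref{prop:main} matches the paper's Lemma~\ref{lem:dehntwistpath} together with Proposition~\ref{prop:path} and Lemma~\ref{lem:gjtwists}. You also spot the genuine obstruction: at intermediate times the inner boundary of each ring is rotated, these rotations accumulate with nesting depth, and a uniform mesh cannot control the resulting boundary shift on a pair-of-pants component at depth $k$. However, the repair you propose --- reparametrizing time so that each depth twists within its own window --- cannot produce a \emph{finite} decomposition. If only depth $k$ is active during $[a_k,b_k]$, then on each $V_{l,w}$ with $|w|=k$ the map $F_t$ is some isotopy of $V_{l,w}$ through full boundary shift $2\pi$; since the isometric distortion of a partial twist is scale-invariant, the number of $(1+\epsilon)$-increments required inside $[a_k,b_k]$ is bounded \emph{below} by a positive constant independent of $k$, and summing over the infinitely many depths forces $N=\infty$. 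Shrinking the windows in proportion to diameter does not change this count, and allowing a bounded overlap of active depths only rescales it. Moreover, when $V_{l,w}$ is not round, the interior extension of the boundary rotation must move the nested rings and Cantor set by rigid motions (so that later twists can be composed on top of it, as in Lemma~\ref{lem:pathcomp}), and no time-reparametrization supplies such an extension.

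The ingredient you are missing --- and the place where $\dim_A(X)<1$ actually enters, rather than at a reparametrization --- is the collapsing construction of Proposition~\ref{prop:1}. Because $\phi_{l,w}^{-1}(X\cap V_{l,w})$ is $(C,s)$-homogeneous with $s<1$, the square thickening $\mathcal{T}_\delta$ of it has total diameter small enough that it can be translated, component by component and isometrically, into a fixed round ball $B_{l,w}\subset\phi_{l,w}^{-1}(V_{l,w})$ along a bi-Lipschitz path. The paper unwinds the twist in $R_{l,w}$ by concatenating three paths on $V_{l,w}$: collapse $\mathcal{T}_\delta(X_{l,w})$ into $B_{l,w}$; interpolate a $2\pi$-rotation of $B_{l,w}$ with the boundary untwist on $\partial V_{l,w}$ via Proposition~\ref{prop:interp}; then undo the collapse. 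The resulting path $H_{l,w}$ acts by isometries on every ring $p$ levels deeper (where $p$ is furnished by \eqref{eq:shrink}), which is exactly what lets Steps~3 and 4 of Section~\ref{sec:proof} unwind \emph{all} depths in a single residue class modulo $p$ within one bi-Lipschitz path $F^k$, with Proposition~\ref{prop:remov} controlling the limit across $X$. Concatenating the $p$ paths $F^0,\dots,F^{p-1}$ then gives a single bi-Lipschitz path from $f$ to the identity, and hence a finite decomposition. Without the collapsing mechanism and the modulo-$p$ grouping your construction produces infinitely many factors.
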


It is worth pointing out that if the rings $R_j$ can initially be chosen to be round rings, such as those constructed in \cite{AIPS}, then the assumption $\dim_A(X)<1$ can be replaced by uniform disconnectedness, and we can decompose $f$ directly in this case. In fact, the assumption $\dim_A{X}<1$ can be dropped, see Section \ref{sec:example} for an example), and we conjecture that it can be replaced by uniform disconnectedness.

\subsection{Strategy of the proof}

The crux of the proof is to construct a bi-Lipschitz path from the identity to $f$. Bi-Lipschitz paths were introduced in \cite{FM} to provide a way to deform one bi-Lipschitz mapping to another in a controlled way. Partitioning the path into small subintervals yields the required decomposition.

Consider first the special case where each of the rings $R_j$ are round, see Figure \ref{fig:1}. Writing $V_j$ for the bounded component of the complement of $R_j$, we can unwind the Dehn twist supported in $R_j$ in the obvious way, and extend this unwinding via the identity in the unbounded component of the complement of $R_j$ and via a path of rotations in $V_j$. This unwinding can happen in each ring $R_j$ and the corresponding domain $V_j$ simultaneously for all $j$. The point is that on a given $R_j$, the unwinding will act via finitely many rotations (one for each ring $R_k$ such that $R_j \subset V_k$) and then via the unwinding on $R_j$.

\begin{figure}[h]
\begin{center}
\includegraphics[width=2in]{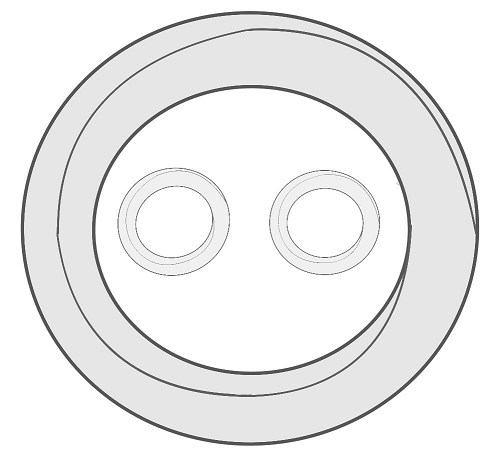}
\caption{Round rings and Dehn twists}
\label{fig:1}
\end{center}
\end{figure}

This idealized case is, however, not the most general case. Complications arise once $R_j$ are not round rings. In particular, it may certainly be the case that the two rings $R_{k_1},R_{k_2}$ contained in $V_j$, cannot be flowed isometrically around $D_j$, see Figure \ref{fig:2}

\begin{figure}[h]
\begin{center}
\includegraphics[width=3in]{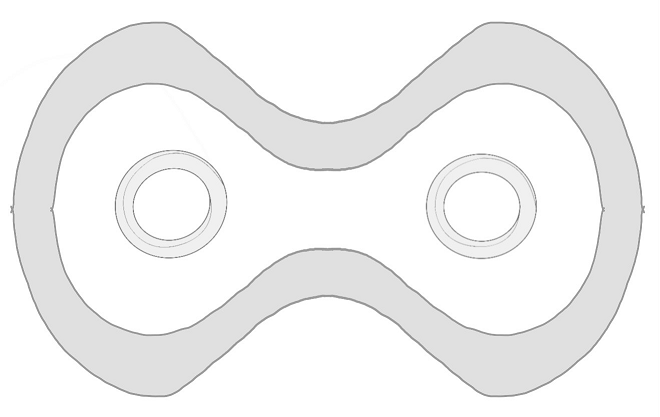}
\caption{Rings that are not round}
\label{fig:2}
\end{center}
\end{figure}

Our resolution to this issue is to use the hypothesis that $\dim_A X <1$ to show that the intersection $X \cap V_j$ may be covered by small islands which can be flowed into a relatively small ball contained in $V_j$. The point is that while the next level of rings down from $R_j$ may not be flowed around $V_j$, we can pass through finitely many levels, say $N$, to obtain a collection of rings which can be flowed around $V_j$.

Consequently, to unwind the Dehn twist in $R_j$, we concatenate three bi-Lipschitz paths in $V_j$: one to move the rings $N$ levels down into a given disk contained in $V_j$, one to act as a conjugate of rotations in $V_j$, and then the third to undo the first path. It follows that we may apply this construction simultaneously in the collection of levels that differ by $N$ to yield a bi-Lipschitz path. Applying this construction $N$ times, we may concatenate the resulting bi-Lipschitz paths to obtain one path from the identity to $f$ itself.

\subsection{Outline of the paper}

In Section \ref{sec:prelim}, we recall the basic definitions and properties of the objects we will use. In Section \ref{sec:UD-hyper}, we prove Theorem \ref{thm:UD-hyper}. In Section \ref{sec:BLpaths}, we prove some technical results on bi-Lipschitz paths. In Section \ref{sec:collapse}, we study how to collapse sets of Assouad dimension less than $1$ into small disks. Finally in Section \ref{sec:multitwist}, we prove Proposition \ref{prop:main}, and in Section \ref{sec:proof} we prove how the map $f$ in Theorem \ref{thm:main1} can be decomposed into bi-Lipschitz mappings of small isometric distortion. Finally, in Section \ref{sec:example} we construct a multitwist map with a singular set of Assouad dimension close to 2 that can be decomposed using the techniques of the paper.

\subsection{Acknowledgements}

The authors are indebted to Vladimir Markovic for suggesting this problem and many discussions thereupon.

\section{Preliminaries}\label{sec:prelim}

\subsection{Modulus of ring domains}

Given a family $\G$ of curves in $\R^n$, define the \emph{conformal modulus}
\[ \Mod(\G) = \inf_{\rho} \int_{\R^n}\rho(x)^n\, dx \]
where the infimum is taken over all Borel $\rho:\R^n \to [0,\infty)$ such that $\int_{\g}\rho\, ds \geq 1$ for all locally rectifiable $\g\in \G$.

Here and for the rest, given 
a ring domain $R$ in $\R^2$ with boundary components $\g_1$ and $\g_2$, we denote by $M(R)$ the modulus of  the family of curves in $R$ that join $\g_1$ with $\g_2$.
Observe that the larger $M(R)$ is, the thinner the ring domain $R$ is.
It is well known \cite{Loewner} that there exists a decreasing function $\psi:(0,\infty) \to (0,\infty)$ such that, if $R$ is a ring domain with outer boundary component $\g_1$ and inner boundary component $\g_2$, then
\begin{equation}\label{eq:Loewner}
 M(R) \geq \psi\left( \frac{\dist(\g_1,\g_2)}{\diam{\g_2}} \right).
\end{equation}

\subsection{Assouad dimension}

A set $X\subset \R^N$ is \emph{$s$-homogeneous} for some $s\geq 0$ if there exists $C>0$ such that for every bounded set $A\subset X$, any $\e\in (0,\diam{A})$, and any $\e$-separated set $V\subset A$, 
\[\card{V} \leq C(\e^{-1}\diam{A})^{s}.\]
Recall that a set $V\subset A$ is $\e$-separated if for any distinct $x,y \in V$ we have $|x-y|\geq \e$.

If we want to emphasize on the constant $C$, we say that $X$ is \emph{$(C,s)$-homogeneous}. Note that every subset of $\R^N$ is $N$-homogeneous. Moreover, if $0\leq s_1\leq s_2$ and $X$ is $s_1$-homogeneous, then it is also $s_2$-homogeneous. 
The \emph{Assouad dimension} of a set $X \subset \R^N$ is defined as
\[ \dim_A(X) = \inf\{s \geq 0 : X\text{ is $s$-homogeneous}\}.\]

\subsection{Hyperbolic geometry}

Suppose $X\subset\S^2$ is a Cantor set and $S=\S^2 \setminus X$ is a hyperbolic Riemann surface with a pants decomposition. Here, we recall how the cuffs $(\alpha_j)$ of the decomposition can be related to the thickness of ring domains embedded in the surface, see Figure \ref{fig:3}.

\begin{figure}[h]
\begin{center}
\includegraphics[width=5in]{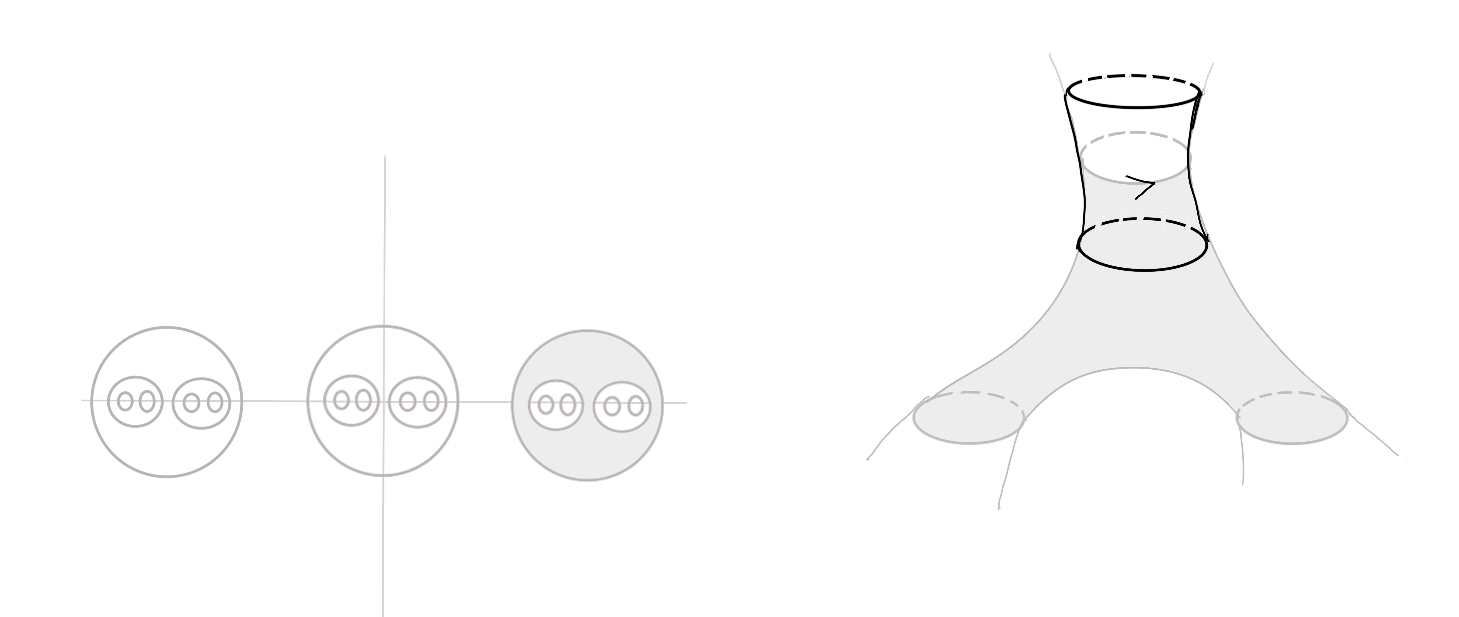}
\caption{On the left, we have a pants decomposition for $\S^2 \setminus X$ with a particular pair of pants shaded. On the right, we have a topological model of a pair of pants, again shaded, with the arrowed curve a geodesic cuff $\alpha_j$ and the ring domain in black an example of $R_j'$.}
\label{fig:3}
\end{center}
\end{figure}

\begin{proposition}\label{prop:collar}
For each $j$ there exists a ring domain $R_j' \subset \S^2\setminus X$ that contains $\alpha_j$ such that domains $R_j'$ are mutually disjoint and
\[ M(R_j') =  \frac{\ell(\alpha_j)}{2\arcsin ( e^{-\ell(\alpha_j)} )} .\]
\end{proposition}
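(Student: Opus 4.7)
The plan is to construct each $R_j'$ as a hyperbolic collar around $\alpha_j$ of a carefully chosen width, and then compute its modulus by passing to the universal cover. In each free homotopy class we may take the unique simple closed geodesic representative, so we assume $\alpha_j$ is itself a simple closed geodesic of length $\ell := \ell(\alpha_j)$. I would set $R_j' := \{z \in S : d_S(z,\alpha_j) < w_j\}$, where $w_j$ is defined by $\tanh w_j = e^{-\ell}$. A short trigonometric computation gives $\sinh(w_j)\sinh(\ell/2) < 1$ for every $j$, so the Keen Collar Lemma applies to the pairwise disjoint family of simple closed geodesics $(\alpha_j)$ and guarantees that the $R_j'$ are embedded ring domains, each containing $\alpha_j$, and mutually disjoint.

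To evaluate $M(R_j')$, I would lift $\alpha_j$ to a geodesic in $\Hp$, chosen so that the lift is the positive imaginary axis $i\R_+$ and the covering transformation generating the stabilizer is $z \mapsto e^{\ell}z$. A standard hyperbolic computation shows that the equidistant locus $\{z \in \Hp : d_\Hp(z, i\R_+) = w\}$ consists of the two Euclidean rays $\{re^{i\alpha} : r>0,\ \alpha = \pi/2 \pm \theta\}$, with $\sin\theta = \tanh w$. With $w = w_j$ this yields $\theta = \arcsin(e^{-\ell})$, so the lift of $R_j'$ is the open wedge $W := \{re^{i\alpha} : r>0,\ |\alpha - \pi/2|<\theta\}$.

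The holomorphic map $z \mapsto (\log z)/\ell$ sends $W$ conformally onto the horizontal strip $\{u+iv : |v - \pi/(2\ell)| < \theta/\ell\}$ and intertwines $z \mapsto e^{\ell}z$ with the unit translation $u \mapsto u+1$. Hence $R_j'$ is conformally equivalent to a flat cylinder of circumference $1$ and height $2\theta/\ell$; a constant extremal metric is admissible and yields modulus $\ell/(2\theta)$ for the family of curves joining the two boundary circles. Substituting $\theta = \arcsin(e^{-\ell(\alpha_j)})$ gives the desired identity for $M(R_j')$.

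The main obstacle is setting up the hyperbolic-to-Euclidean dictionary correctly — specifically the identification $\sin\theta = \tanh w$ between the angular aperture of an equidistant wedge in $\Hp$ and the hyperbolic distance to its core geodesic. Once that correspondence is fixed, the disjointness claim from the Keen bound and the final modulus calculation both reduce to short direct computations.
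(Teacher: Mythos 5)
Your proposal is correct and takes essentially the same approach as the paper: both replace $\alpha_j$ by its geodesic representative, take the collar of half-width $w$ with $\tanh w = e^{-\ell(\alpha_j)}$ (equivalently $w = B(\ell(\alpha_j))$ in the paper's notation, since $\tanh B(t) = e^{-t}$), invoke a collar lemma for disjointness, and compute the modulus by lifting to a fundamental domain in a model of $\Hp$. The only cosmetic differences are that you quote Keen's collar bound while the paper cites the version in \cite{ALPSS}, and you lift to the upper half-plane and pass to a strip via $(\log z)/\ell$, whereas the paper works directly in the strip model $\Sigma = \{|\Im z| < \pi/2\}$; the resulting rectangles and the final modulus $\ell/\bigl(2\arcsin(e^{-\ell})\bigr)$ agree.
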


This result is assuredly standard. Maskit \cite{Ma85} proves this for finite type surfaces, but since we will be applying this to infinite type surfaces, we give a proof for the convenience of the reader. We need the following Collar Lemma. 

\begin{lemma}[{\cite[Lemma 2.2]{ALPSS}}]\label{lem:collar}
There exist pairwise disjoint collars $(C_j)_j$ of cuffs $(\alpha_j)_j$
given by
\[ C_j = \{ z\in S : d_S(z, \gamma)  \leq B(\ell(\alpha_j)) \}, \]
where $d_S$ denotes the hyperbolic metric on $S$ and 
\[ B(t) = \frac{1}{2} \log \left ( 1 + \frac{2}{e^t-1} \right ).\]
\end{lemma}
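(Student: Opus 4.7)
The plan is to set $R_j' := C_j$, the hyperbolic collar around $\alpha_j$ from Lemma \ref{lem:collar}. Mutual disjointness and the inclusion $\alpha_j \subset R_j'$ are then immediate from that lemma, so what remains is to verify that each $C_j$ is a ring domain in $S$ and to compute its modulus.

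Fix $j$ and write $\ell := \ell(\alpha_j)$ and $w := B(\ell)$. I would pass to the universal cover $\pi \colon \Hp \to S$ with its hyperbolic metric. The cuff $\alpha_j$ is a simple closed geodesic on $S$ corresponding to the conjugacy class of a primitive hyperbolic deck transformation $\gamma$ of translation length $\ell$; after conjugating by an isometry of $\Hp$, I may assume the axis of $\gamma$ is the imaginary axis and that $\gamma(z) = e^\ell z$. Since the hyperbolic equidistant curves to the imaginary axis in $\Hp$ are Euclidean rays through the origin, the set of points at hyperbolic distance $\leq w$ from that axis is precisely the Euclidean wedge
\[ W = \bigl\{z \in \Hp : |\arg z - \tfrac{\pi}{2}| \leq \arctan(\sinh w)\bigr\}, \]
which is $\gamma$-invariant. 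Standard hyperbolic geometry, combined with the disjointness conclusion of Lemma \ref{lem:collar}, guarantees that $\pi|_W$ factors through the cyclic quotient $W/\langle\gamma\rangle$ and conformally embeds it as $C_j$, so $C_j$ is a topological annulus and hence a ring domain.

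To compute the modulus, apply $z \mapsto \log z$: this sends $W$ conformally onto the horizontal strip $\Sigma = \{u+iv : |v - \tfrac{\pi}{2}| \leq \arctan(\sinh w)\}$ and conjugates $\gamma$ to the translation $\zeta \mapsto \zeta + \ell$. Thus $C_j$ is conformally a flat cylinder of circumference $\ell$ and height $2\arctan(\sinh w)$, whose modulus of crosscut curves equals
\[ M(C_j) = \frac{\ell}{2\arctan(\sinh w)}. \]
A short trigonometric manipulation using $w = B(\ell) = \tfrac12 \log\bigl((e^\ell+1)/(e^\ell-1)\bigr)$ yields $\sinh(B(\ell)) = 1/\sqrt{e^{2\ell}-1}$, and since $\tan\theta = 1/\sqrt{e^{2\ell}-1}$ forces $\sin\theta = e^{-\ell}$, one obtains $\arctan(\sinh(B(\ell))) = \arcsin(e^{-\ell})$, producing the claimed formula.

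The main technical point is verifying that $W$ really projects to an embedded annulus, namely that no two distinct lifts of $\alpha_j$ have overlapping $w$-neighborhoods, and that the $w$-neighborhood of the chosen lift meets no lift of $\alpha_i$ for $i \neq j$. This is exactly the geometric content encoded by the word \emph{collar} in Lemma \ref{lem:collar}: without it, $\pi|_W$ could fold $W$ onto itself or onto a neighboring collar, and the modulus formula would not apply. Once embeddedness is in hand, the rest of the argument is a mechanical conformal identification of $W$ with a flat cylinder together with a trigonometric simplification.
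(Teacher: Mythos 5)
Your proposal does not prove Lemma \ref{lem:collar}; it proves Proposition \ref{prop:collar}. You explicitly invoke Lemma \ref{lem:collar} as an input (``Mutual disjointness and the inclusion $\alpha_j \subset R_j'$ are then immediate from that lemma'') and proceed to identify $C_j$ conformally with a flat cylinder and compute its modulus --- which is precisely the content of Proposition \ref{prop:collar}. You even flag the outstanding issue yourself in your final paragraph: embeddedness of the projected wedge and pairwise disjointness for different $j$ ``is exactly the geometric content encoded by the word \emph{collar} in Lemma \ref{lem:collar}.'' Those two assertions, together with the specific width function $B(t)=\tfrac12\log\bigl(1+2/(e^t-1)\bigr)$, constitute the whole of the lemma, and your argument defers to them rather than establishing them. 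A genuine proof would require the standard collar-lemma computation: lift to the universal cover, bound from below the perpendicular distance between any two axes covering cuffs, and verify via hyperbolic trigonometry that the two $B(\ell)$-neighborhoods cannot meet. The paper itself does not give this either --- it cites \cite[Lemma 2.2]{ALPSS} --- so there is no in-paper proof to compare against, but either way your argument leaves the lemma unproved.

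For the record, as a blind proof of Proposition \ref{prop:collar} your argument is correct and essentially equivalent to the paper's, differing only in choice of conformal model. The paper lifts $C_j$ to a rectangle $[-\ell/2,\ell/2]\times[-s,s]$ in the strip $\Sigma=\{|\Im z|<\pi/2\}$ with density $\sec(\Im z)$, solves $2B(\ell)=2\log(\sec s+\tan s)$ to get $s=\arcsin(e^{-\ell})$, and reads off $M(C_j)=r/s$. You work in the upper half-plane, realize the collar as a Euclidean wedge of half-angle $\arctan(\sinh B(\ell))$ about the imaginary axis, apply $\log$ to obtain a flat cylinder of circumference $\ell$ and height $2\arctan(\sinh B(\ell))$, and finish with the identity $\arctan(\sinh B(\ell))=\arcsin(e^{-\ell})$. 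The two computations are conjugate by the logarithm and yield the same formula; neither is a proof of Lemma \ref{lem:collar}.
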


\begin{proof}[Proof of Proposition \ref{prop:collar}]
Let $C_j$ be the collars from Lemma \ref{lem:collar}. These collars are necessarily ring domains.

Since $S$ is a hyperbolic Riemann surface, we can consider its lift to the strip model of the hyperbolic plane. More precisely, let $\Sigma = \{ z\in \C : | \Im(z)| <\pi /2 \}$. Then the hyperbolic metric density on $\Sigma$ is given by $\lambda_{\Sigma}(z) = \sec (\Im (z) )$ (see for example \cite[Example 7.9]{BM}). Since we can identify $S$ with $\Sigma / G$, where $G$ is a covering group of deck transformations, we can lift $\alpha_j$ so that its lift is contained in the real axis in $\Sigma$. Moreover, $C_j$ can be lifted to a rectangle in $\Sigma$ whose closure is given by $R = [-r,r] \times [-s, s]$.

Here, we have $d_{\Sigma} (-r , r) = \ell$ and $d_{\Sigma}(-is , is ) = 2 B(\ell(\alpha_j) )$. Since the hyperbolic metric and the Euclidean metric coincide on the real axis in $\Sigma$, we have $r=\ell(\alpha_j) / 2$. Next,
\[2 B(\ell(\alpha_j)) = d_{\Sigma}(-is, is) = \int_{-s}^s \sec t \: dt = 2 \ln (\sec s +\tan s).\]
Solving this for $s$, we see that
\[ s = \arcsin ( \tanh (B(\ell(\alpha_j))))\]
and hence
\[ s = \arcsin ( e^{-\ell(\alpha_j)} ).\]
Finally, $M(C_j)$ is equal to the modulus of the path family joining the $r$-sides to the $s$-sides of the rectangle $R$. Thus
\[ M(C_j) = \frac{r}{s} = \frac{\ell_S(\alpha_j)}{2\arcsin ( e^{-\ell_S(\alpha_j)} )} . \qedhere\]
\end{proof}

We will also need the following result of Wolpert.

\begin{lemma}[{Wolpert \cite{Wolpert}}]
\label{thm:wolpert}
Let $f:S \to S'$ be a $K$-quasiconformal homeomorphism between hyperbolic Riemann surfaces $S,S'$. Let $\alpha$ be a closed geodesic in $S$, and let $\alpha'$ be the unique closed geodesic in $S'$ that is homotopic to $f(\alpha)$. Then
\[ K^{-1}\ell_S[ \alpha ] \leq \ell_{S'} [ \alpha' ] \leq K \ell_S [ \alpha ].\]
\end{lemma}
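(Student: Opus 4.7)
The plan is the classical length-area argument via annular covers. First I would pass to the annular cover $\pi : A \to S$ corresponding to the infinite cyclic subgroup $\langle [\alpha] \rangle \leq \pi_1(S)$: this is a hyperbolic annulus whose core geodesic is a lift of $\alpha$ and has the same hyperbolic length $\ell_S[\alpha]$. A direct computation in the strip model of the hyperbolic plane — essentially the one already carried out in the proof of Proposition \ref{prop:collar}, but applied to the entire strip $\{|\Im z|<\pi/2\}$ rather than just a horizontal substrip — identifies the conformal modulus of $A$ as $M(A) = \ell_S[\alpha]/\pi$. The analogous formula $M(A') = \ell_{S'}[\alpha']/\pi$ holds for the annular cover $A' \to S'$ at $\alpha'$.

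Next, since $\alpha'$ is freely homotopic to $f(\alpha)$, the isomorphism $f_* : \pi_1(S) \to \pi_1(S')$ carries $\langle [\alpha] \rangle$ onto $\langle [\alpha'] \rangle$. Standard covering-space theory then produces a homeomorphism $\widetilde{f} : A \to A'$ lifting $f$, and since the covering projections $\pi, \pi'$ are local conformal diffeomorphisms, $\widetilde{f}$ is still $K$-quasiconformal. The quasi-invariance of conformal modulus under $K$-quasiconformal maps then yields
\[ K^{-1} M(A) \leq M(A') \leq K\, M(A). \]
Substituting the formulas for $M(A)$ and $M(A')$ and cancelling the common factor $\pi$ gives exactly the desired inequality.

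Every ingredient here is classical, so I do not anticipate a substantive obstacle. The point requiring most care is verifying that $\widetilde{f}$ is genuinely defined on the correct annular cover, which reduces to the identity $f_*\langle[\alpha]\rangle = \langle[\alpha']\rangle$; this is built into the hypothesis that $\alpha'$ is homotopic to $f(\alpha)$. The modulus formula $M(A) = \ell_S[\alpha]/\pi$ itself is essentially the same computation that underlies Proposition \ref{prop:collar}, now carried out on the entire strip instead of on a collar substrip, and its linearity in $\ell_S[\alpha]$ is precisely what makes the $K$-distortion of modulus translate into a $K$-distortion of length with the same constant.
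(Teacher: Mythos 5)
The paper states this as a cited result of Wolpert and does not supply its own proof. Your argument---passing to the annular cover corresponding to $\langle[\alpha]\rangle$, computing $M(A) = \ell_S[\alpha]/\pi$ in the strip model (consistent with the modulus convention used in the proof of Proposition~\ref{prop:collar}), lifting $f$ to a $K$-quasiconformal map $\widetilde f : A \to A'$, and invoking quasi-invariance of modulus---is precisely Wolpert's classical proof, and it is correct as written.
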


\subsection{Square thickenings}

We recall some terminology and notation from \cite{MM2}. Given $\alpha>0$ define 
\begin{align*} 
\mathscr{G}_{\alpha} := \{ \alpha\textbf{n} + [0,\alpha]^2 : \textbf{n} \in \mathbb{Z}^2 \}\quad\text{and}\quad  \mathscr{G}_{\alpha}^1 := \{e : \text{$e$ is an edge of some $S\in \mathscr{G}_{\alpha}$} \}.
\end{align*}
Given a set $W \subset \R^2$ define $W^{\alpha}$ to be the collection of all squares in $\mathscr{G}_{\alpha}$ that intersect with $W$. For $\delta>0$, define the $\delta$-square thickening
\[ \mathcal{T}_{\d}(W) = (W^{4\d})^{\d},\] 
see Figure \ref{fig:4}.


\begin{figure}[h]
\begin{center}
\includegraphics[width=3in]{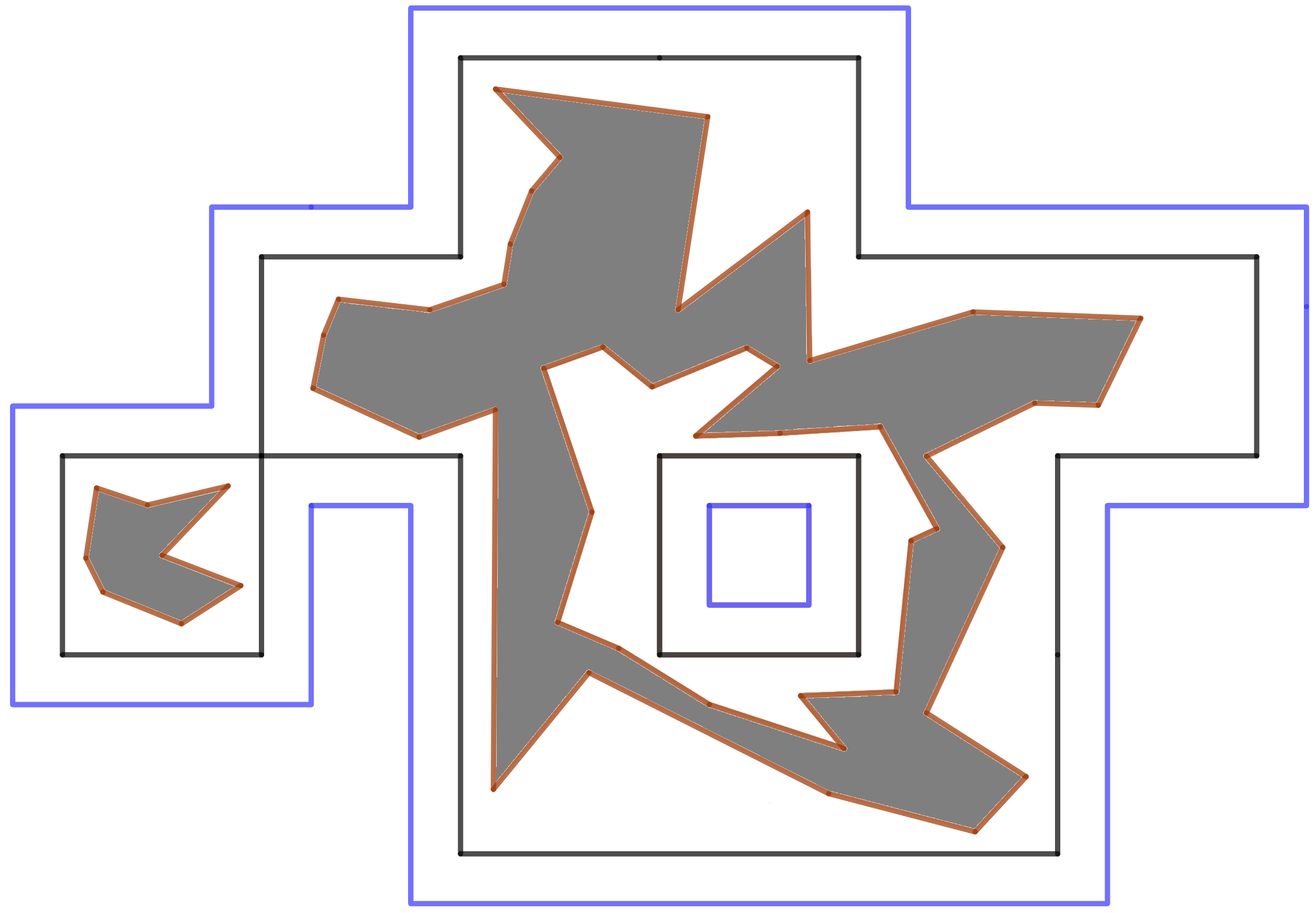}
\caption{The shaded region is $W$. 
The black curve is the boundary of $W^{4\d}$ and the blue curve is the boundary of $\mathcal{T}_{\d}(W)$.}
\label{fig:4}
\end{center}
\end{figure}

\begin{lemma}[{\cite[Lemma 2.1]{MM2}}]\label{lem:MM}
If $W$ is a bounded subset of the plane and $\d>0$, then the boundary of $\mathcal{T}_{\d}(W)$ is a finite union of mutually disjoint polygonal Jordan curves made of edges in $\mathscr{G}^1_{\d}$ and
\begin{equation}\label{eq:MM} 
\d \leq \dist(x,W) \leq 8\d\qquad \text{for all $x\in\partial\mathcal{T}_{\d}$}.
\end{equation}
\end{lemma}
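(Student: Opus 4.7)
The proof splits into the combinatorial structure of $\partial\mathcal{T}_\delta(W)$ and the metric estimate, so I would handle these in turn. Since $W$ is bounded, $W^{4\delta}$ is a finite collection of $4\delta$-squares, and since $4\delta\in\delta\mathbb{Z}$, each $4\delta$-square subdivides into sixteen $\delta$-squares, so $|W^{4\delta}|$ is itself a union of closed $\delta$-squares. Consequently $\mathcal{T}_\delta(W)$ is a finite union of closed $\delta$-squares, and $\partial\mathcal{T}_\delta(W)$ is contained in $\mathscr{G}_\delta^1$. The main combinatorial content is to rule out ``pinch points'': corners $c$ at which two diagonal $\delta$-squares $S_1,S_2$ of $\mathcal{T}_\delta(W)$ meet while the other two $\delta$-squares $S_3,S_4$ at $c$ lie outside $\mathcal{T}_\delta(W)$. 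Suppose such a pinch exists. Then $c\in S_3$ and $S_3\cap|W^{4\delta}|=\emptyset$ force $c\notin|W^{4\delta}|$, so every $4\delta$-square of $\mathscr{G}_{4\delta}$ containing $c$ lies outside $W^{4\delta}$.

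I would then case-analyze on the position of $c$ in $\mathscr{G}_{4\delta}$. If $c$ is a $4\delta$-grid corner, a direct coordinate check shows that any $Q\in\mathscr{G}_{4\delta}$ with $Q\cap S_1\neq\emptyset$ must contain $c$, forcing $c\in|W^{4\delta}|$, a contradiction. If $c$ is interior to a $4\delta$-square $Q^\ast$, then $Q^\ast\notin W^{4\delta}$ gives $S_1\cap|W^{4\delta}|\subset S_1\cap\partial Q^\ast$; the condition that both $S_1$ and $S_2$ touch $\partial Q^\ast$ pins $c$ down to one of two specific positions within $Q^\ast$, and the neighboring $4\delta$-square in $W^{4\delta}$ witnessing $S_1\cap|W^{4\delta}|\neq\emptyset$ is then forced to also meet $S_3$. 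The case where $c$ lies on the interior of a $4\delta$-edge is handled similarly by comparing which of the two incident $4\delta$-squares and their neighbors can supply the witness. In every case a contradiction arises, so no pinch occurs and $\partial\mathcal{T}_\delta(W)$ is a finite disjoint union of polygonal Jordan curves with edges in $\mathscr{G}_\delta^1$.

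For the metric bounds, let $x\in\partial\mathcal{T}_\delta(W)$. For the upper bound, choose a $\delta$-square $S\ni x$ in $\mathcal{T}_\delta(W)$, some $Q\in W^{4\delta}$ with $Q\cap S\neq\emptyset$, and $w\in Q\cap W$; picking any $y\in S\cap Q$, the triangle inequality gives
\[ \dist(x,W) \le \diam(S) + \diam(Q) = \delta\sqrt{2} + 4\delta\sqrt{2} = 5\delta\sqrt{2} < 8\delta. \]
For the lower bound, since $x$ is a boundary point, it lies in some closed $\delta$-square $S'\notin\mathcal{T}_\delta(W)$ (on the other side of the boundary edge), so $S'\cap|W^{4\delta}|=\emptyset$. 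Any $Q\in W^{4\delta}$ and $S'$ are closed axis-aligned squares whose vertices lie in $\delta\mathbb{Z}^2$, and set-theoretic disjointness on this common refinement forces a coordinate gap of at least $\delta$, so $\dist(S',Q)\ge\delta$; since $W\subset|W^{4\delta}|$, this yields $\dist(x,W)\ge\delta$. The main obstacle is the pinch-point case analysis; the subtlety there is to exploit the scale gap between $\delta$ and $4\delta$ to show that every $4\delta$-square in $W^{4\delta}$ responsible for $S_1\in\mathcal{T}_\delta(W)$ must also meet one of the diagonal squares $S_3,S_4$. The rest of the argument is routine.
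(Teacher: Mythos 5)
The paper does not prove Lemma \ref{lem:MM} itself; it cites it verbatim from \cite{MM2}, so there is no internal proof against which to compare your argument. Judging your proposal on its own terms, the overall structure is sound and the argument appears correct. The reduction to ruling out degree-four (``pinch'') vertices of $\partial\mathcal{T}_\delta(W)$ is the right combinatorial content, since once every boundary vertex has degree two the boundary decomposes into finitely many disjoint polygonal Jordan cycles. Your observation that $c\notin\lvert W^{4\delta}\rvert$ (so that the $4\delta$-square $Q^\ast$ containing the corner sub-square $S_1$, and in fact every $4\delta$-square containing $c$, is excluded from $W^{4\delta}$) is the key lever: it forces the witnessing square for $S_1\in\mathcal{T}_\delta(W)$ to meet $S_1$ only along $\partial Q^\ast$, and in each of the three positional cases for $c$ (grid corner, edge interior, face interior) that witness is seen to touch $S_3$ or $S_4$ as well, contradicting the pinch hypothesis. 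I verified the three cases directly; they all close, and in the face-interior case you also implicitly use that if $S_1$ (or $S_2$) fails to touch $\partial Q^\ast$ then it is in the open interior of $Q^\ast$ and hence meets no $4\delta$-square at all, so it cannot lie in $\mathcal{T}_\delta(W)$ --- worth making explicit, since this is what ``pins $c$ down to one of two specific positions.'' The metric bounds are correct: the upper estimate gives the sharper $5\sqrt{2}\,\delta<8\delta$, and the lower bound is exactly the observation that two disjoint closed axis-aligned boxes with vertices in $\delta\mathbb{Z}^2$ are at distance at least $\delta$, combined with $W\subset\lvert W^{4\delta}\rvert$. So the approach is valid and essentially self-contained; the only caveat is that the case analysis is written very tersely and would need to be fleshed out (including the implicit exclusion just mentioned) to count as a complete proof.
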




\subsection{Symbolic notation}
\label{sect:sn}

At several junctures in this paper, it will be convenient to use symbolic notation to describe our constructions.

Given an integer $k\geq 0$, we denote by $\{1,2\}^k$ the set of words formed from the alphabet $\{1,2\}$ that have length exactly $k$. Conventionally, we set $\{1,2\}^0 = \{\varepsilon\}$ where $\varepsilon$ is the empty word. We also denote by $\{1,2\}^* = \bigcup_{k\geq 0}\{1,2\}^k$ the set of all finite words formed from $\{1,2\}$. Given a word $w \in \{1,2\}^*$, we denote by $|w|$ the length of $w$ with the convention $|\varepsilon| = 0$.

\section{Uniformly disconnected Cantor sets and hyperbolic geometry}\label{sec:UD-hyper}

In this section, we prove Theorem \ref{thm:UD-hyper}. One direction of the theorem is given in Section \ref{sec:UDimpliesH} and the other direction is given in Section \ref{sec:HimpliesUD}.

\subsection{Assuming uniformly disconnected}\label{sec:UDimpliesH}

Here we prove the necessary direction of Theorem \ref{thm:UD-hyper}.

\begin{proposition}
\label{thm:1}
Let $X\subset \mathbb{S}^2$ be a $c$-uniformly disconnected Cantor set. There exists $M>0$ depending only on $c$, and there exists a pants decomposition for the Riemann surface $S = \S^2 \setminus X$ such that the associated cuffs $(\alpha_j)$ satisfy $\sup_j \ell [ \alpha_j ] < M$.
\end{proposition}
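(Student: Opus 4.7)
The plan is to build a pants decomposition of $S=\mathbb{S}^2\setminus X$ from a hierarchical tree of clumps of $X$ and then to bound each cuff length uniformly by combining the Schwarz--Pick estimate for the hyperbolic density $\lambda_S$ with the square-thickening machinery of Section~\ref{sec:prelim}.

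First I construct the tree. Using $c$-uniform disconnectedness, at every scale $r\in(0,\diam X)$ the equivalence classes of the relation on $X$ generated by $x\sim y$ whenever $|x-y|<r/c$ form a partition into clumps of diameter at most $r$ and pairwise separation at least $r/c$: if the class $C_x$ of some $x$ were not contained in the chunk $X_{x,r}$ from the definition, then a chain $x=x_0,\dots,x_n\in C_x$ with consecutive jumps less than $r/c$ would have to cross the gap $\dist(X_{x,r},X\setminus X_{x,r})\geq r/c$, which is impossible. Applying this at the scales $r_k=2^{-k}\diam X$ produces nested partitions, and an elementary binarization (three children at the root, two at every other internal node) organizes them into a rooted tree $\mathcal{T}$ in which each node $v$ carries a clump $X_v\subset X$ satisfying $\dist(X_v,X\setminus X_v)\geq d_v/c_0$, where $d_v=\diam X_v$ and $c_0$ depends only on $c$.

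Next, for each non-root $v\in\mathcal{T}$ put $\delta_v=d_v/(16 c_0)$ and let $\gamma_v$ be the outer boundary component of the square thickening $\mathcal{T}_{\delta_v}(X_v)$. By \eqref{eq:MM} every point of $\partial\mathcal{T}_{\delta_v}(X_v)$ lies within $8\delta_v<d_v/c_0$ of $X_v$ and hence misses $X\setminus X_v$, so $\mathcal{T}_{\delta_v}(X_v)\cap X=X_v$ and Lemma~\ref{lem:MM} makes $\gamma_v$ a simple closed polygonal Jordan curve in $S$ separating $X_v$ from $X\setminus X_v$. Nested clumps yield nested bounded disks $D_v\subset\mathbb{S}^2$ with a definite annular gap between them, sibling clumps yield disjoint bounded disks, so $\{\gamma_v\}$ is pairwise disjoint in $S$. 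A routine topological check shows that cutting $S$ along these curves produces, at each internal node $v$ with children $u_1,u_2$, a pair of pants $D_v\setminus(\overline{D_{u_1}\cup D_{u_2}})$ bounded by $\gamma_v,\gamma_{u_1},\gamma_{u_2}$, together with the analogous ``outside'' pants bounded by the three child curves at the root. Thus $\{\gamma_v\}$ is a pants decomposition of $S$ whose cuffs $\alpha_v$ are the geodesic representatives of $\gamma_v$.

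Finally, the Schwarz--Pick lemma applied to the inclusion $B(z,\dist(z,X))\hookrightarrow S$ gives $\lambda_S(z)\leq 2/\dist(z,X)$ throughout $S$. For $z\in\gamma_v$, \eqref{eq:MM} yields $\dist(z,X_v)\in[\delta_v,8\delta_v]$, and combining with the separation estimate gives $\dist(z,X\setminus X_v)\geq d_v/c_0-8\delta_v\geq\delta_v$, so $\dist(z,X)\geq\delta_v$ along $\gamma_v$. On the other hand $\mathcal{T}_{\delta_v}(X_v)$ lies in a disk of radius $d_v+8\delta_v\leq 2d_v$, so it is a union of at most $\pi(2d_v/\delta_v)^2=1024\pi c_0^2$ grid squares of side $\delta_v$, and its total polygonal perimeter---which dominates the Euclidean length of $\gamma_v$---is at most $4\delta_v$ times this count. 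Integrating $\lambda_S$ along $\gamma_v$ therefore gives $\ell_S(\gamma_v)\leq (2/\delta_v)\cdot 4096\pi c_0^2\delta_v=8192\pi c_0^2=:M$, a bound depending only on $c$, and since $\ell[\alpha_v]\leq\ell_S(\gamma_v)$ the proposition follows. The main obstacle is the topological bookkeeping behind the tree: making the binarization rigorous without losing the separation-to-diameter ratio too badly, and verifying that the curves $\gamma_v$ truly decompose $S$ into pants rather than leaving annular or degenerate pieces.
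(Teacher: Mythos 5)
Your strategy is genuinely different from the paper's. The paper proves Proposition~\ref{thm:1} by pulling $X$ into the ternary Cantor set $\mathcal{C}$ via the quasiconformal map of Lemma~\ref{lem:ud}, using the explicit conformally-periodic pants decomposition of $S_0=\S^2\setminus\mathcal{C}$ with cuff lengths equal to a universal constant $q$, and then transporting the bound back via Wolpert's Lemma~\ref{thm:wolpert} and the subordination principle for the hyperbolic metric. You instead build the pants decomposition directly from the clump tree of $X$ and bound the hyperbolic length of each cuff in place by integrating the Schwarz--Pick estimate $\lambda_S\leq 2/\dist(\cdot,X)$ along a controlled polygonal curve. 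Your route is more self-contained (no appeal to the uniformization result of~\cite{Vext}) and would make the dependence on $c$ explicit, while the paper's route offloads all the geometry onto the known model $S_0$. The Schwarz--Pick-plus-grid-count estimate in your last paragraph is quantitatively correct and mirrors the Loewner-type bound \eqref{eq:Loewner} the paper uses elsewhere.

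However, there is a genuine gap in the construction of the cuffs. You set $\delta_v=d_v/(16c_0)$ and take $\gamma_v$ to be ``the outer boundary component'' of $\mathcal{T}_{\delta_v}(X_v)$, but $X_v$ is itself totally disconnected: it may contain two sub-clumps separated by a gap close to $d_v$, which is far larger than $16\delta_v=d_v/c_0$. In that case $\mathcal{T}_{\delta_v}(X_v)$ is disconnected, so there is no distinguished outer boundary component, and \emph{no single} Jordan curve among the boundary components of $\mathcal{T}_{\delta_v}(X_v)$ encloses all of $X_v$ while excluding $X\setminus X_v$. Enlarging $\delta_v$ to force connectedness (e.g.\ to the order of $d_v$) puts the boundary within $8\delta_v\sim d_v$ of $X_v$, which is no longer guaranteed to miss $X\setminus X_v$ since the separation is only $\geq d_v/c_0$; so you cannot simultaneously have connectedness and safety. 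Replacing $X_v$ by $\Hull(X_v)$ before thickening also fails in the plane, since $X\setminus X_v$ can enter the convex hull of $X_v$. This is exactly the obstruction the paper sidesteps by working in the model surface $S_0$ where the pants decomposition is explicit. To salvage your approach you would need a separating Jordan curve of controlled length produced by a different mechanism (for instance, a filled Whitney-type region or a curve extracted from a topological extremal-length argument), and the ``routine topological check'' that the $\gamma_v$ cut $S$ into pants would then need to be redone for that construction.
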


Denote by $\mathcal{C}$ the standard one-third Cantor set and by $S_0$ the Riemann surface $S_0 = \S^2 \setminus  \mathcal{C}$.

\begin{lemma}[{\cite[Corollary A]{Vext}}]\label{lem:ud}
If $X\subset \S^2$ is a uniformly disconnected set, then there exists a quasiconformal map $f:\S^2 \to\S^2$ such that $f(X)\subset \mathcal{C}$.
\end{lemma}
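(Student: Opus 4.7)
The plan is to construct $f$ in two stages: first build a quasisymmetric embedding $\varphi : X \to \mathcal{C}$, then extend $\varphi$ quasiconformally to all of $\S^2$.

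For the first stage, I would exploit the $c$-uniform disconnectedness to obtain a hierarchical decomposition of $X$ indexed by a rooted tree. Recursively, at depth $k$ one produces a partition $X = \bigsqcup_w X_w$ into finitely many clusters, each of diameter at most $\lambda^k \diam X$ for some fixed $\lambda \in (0,1)$, with any two distinct clusters at the same depth separated by at least $\lambda^k \diam X / c'$ for a constant $c' = c'(c)$. A greedy selection starting from the sets $X_{x,r}$ provided by the definition of uniform disconnectedness produces such a partition. Because the number of children of each node is bounded in terms of $c$, the tree embeds combinatorially into the binary tree that indexes $\mathcal{C}$; sending each cluster $X_w$ into the corresponding piece $\mathcal{C}_{\sigma(w)}$ of $\mathcal{C}$ and checking that ratios of distances are preserved up to a controlled factor yields a quasisymmetric embedding $\varphi$ with constants depending only on $c$.

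For the second stage, I would use the fact that both $\S^2 \setminus X$ and $\S^2 \setminus \mathcal{C}$ admit natural decompositions into annular regions associated with their hierarchies: for each internal node $w$ there is a ring domain enclosing the cluster $X_w$ (respectively $\mathcal{C}_{\sigma(w)}$) and separating it from the other clusters of the same depth. The uniform gap-to-diameter control on both sides produces uniform upper and lower bounds on the moduli of these rings, via the Loewner-type estimate \eqref{eq:Loewner}. Between corresponding annular regions I would construct explicit quasiconformal homeomorphisms realizing the boundary identifications dictated by $\varphi$, and then paste them together to obtain a quasiconformal self-map of $\S^2 \setminus X$ onto $\S^2 \setminus \varphi(X)$.

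The main obstacle is the last step: controlling the global distortion and extending across the limit set. The uniform modulus bounds on the matched annuli keep each local quasiconformal piece's distortion bounded in terms of $c$, and the combinatorial compatibility of boundary identifications on neighbouring rings ensures that the pasted map is a genuine homeomorphism on $\S^2 \setminus X$. Finally, because $\diam \varphi(X_w) \to 0$ uniformly in $w$ as $|w| \to \infty$, the pasted map extends continuously to $\S^2$ by $\varphi$ on $X$; since $X$ is a Cantor set, hence of topological dimension zero and totally disconnected, a standard removability argument for quasiconformal homeomorphisms upgrades this continuous extension to a genuine quasiconformal map $f : \S^2 \to \S^2$ with $f(X) = \varphi(X) \subset \mathcal{C}$.
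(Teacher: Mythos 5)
The paper does not actually prove this statement: Lemma~\ref{lem:ud} is quoted verbatim from \cite{Vext} (Corollary~A there) and used as a black box. So your proposal should be judged as an independent reconstruction of the cited result rather than compared against an argument in the present paper.

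As such, your two-stage strategy (quasisymmetrically embed $X$ into $\mathcal{C}$, then extend to a quasiconformal self-map of $\S^2$) is a reasonable template, but there are two genuine gaps. First, in the embedding step, you cannot simply send the cluster $X_w$ at tree-depth $k$ to the piece of $\mathcal{C}$ at tree-depth $k$. Uniform disconnectedness controls the ratio $\dist(X_{w1},X_{w2})/\diam X_w$ from both sides, but it gives no lower bound on $\diam X_{wi}/\diam X_w$; the hypothesis does not include uniform perfectness. So a child cluster can be smaller than its parent by an arbitrarily large factor, while in $\mathcal{C}$ each child is exactly one third of its parent. The depth-matching map then sends a triple $x,y,z$ with $|x-y|/|x-z|$ arbitrarily small to a triple with image ratio bounded below by a constant, violating the $\eta$-quasisymmetry requirement that $\eta(t)\to 0$ as $t\to 0$. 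The fix --- mapping $X_w$ into a piece of $\mathcal{C}$ whose depth is chosen adaptively so that $\log_3$ of its diameter matches $\log \diam X_w$ up to $O(1)$ --- is not present in your write-up, and ``checking that ratios of distances are preserved up to a controlled factor'' hides exactly this issue.

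Second, the final removability claim is false as stated. Topological dimension zero and total disconnectedness do \emph{not} imply quasiconformal removability: there are totally disconnected compact sets of Lebesgue measure zero in the plane that are non-removable for quasiconformal maps (Bishop's examples), and uniformly disconnected sets in the plane can have Hausdorff dimension arbitrarily close to $2$, so one cannot fall back on $\sigma$-finite length either. To finish, you must actually use the geometry of $X$, not just its topology: either establish that uniformly disconnected compacta are NED (e.g.\ via the uniform modulus bounds you already have on the separating ring domains, building the absolutely-continuous-on-lines property for the pasted map across $X$), or cite a removability theorem that applies to sets with this porosity/ring structure. Without that, the last sentence of the argument does not go through.

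Minor point in the same vein: when matching moduli of the ring domains on the two sides, note that only the $X$-side moduli are bounded above a priori (they are bounded below by Loewner). On the $\mathcal{C}$-side, the ring you must use is not the standard gap around $\mathcal{C}_{\sigma(w)}$ but the larger ring separating $\varphi(X_w)$ from the rest of $\varphi(X)$, and matching the two families of moduli up to bounded ratio again depends on getting the adaptive choice of $\sigma$ right. This is not a separate gap, but it shows that the first gap propagates through the second stage of the construction.
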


As observed in \cite[p.5]{Shiga}, the pairs of pants in the pants decomposition of $S_0$ can be chosen to be conformally equivalent to one another. It follows that each such pair of pants has the same cuff lengths. To see this, suppose $P$ and $P'$ are two pairs of pants in this decomposition with a conformal map $h: P \to P'$. Let $R$ and $R'$ be the respective doubles of $P$ and $P'$, that is, $R$ and $R'$ are genus two surfaces. Then $h$ extends via reflection to a conformal map $\widetilde{h}: R \to R'$ and hence $\widetilde{h}$ is a hyperbolic isometry. Restricting $\widetilde{h}$ to the cuffs of $P$, we see that $P$ and $P'$ have the same cuff lengths.

In particular, we conclude that there exist a constant $q>0$ and a pair of pants decomposition of $S_0$ with cuffs $(C_j)$ such that
\begin{equation}
\label{eq:cj}
\sup_j \ell_{S_0} [ C_j ] = q.
\end{equation} 

\begin{proof}[Proof of Proposition \ref{thm:1}]
Let $X\subset \R^2$ be a uniformly disconnected Cantor set, and let $f$ be the quasiconformal map from Lemma \ref{lem:ud} such that $f(X) \subset \mathcal{C}$. We will use the pants decomposition with cuffs $(C_j)$ for $S_0$. Since $f(S) \supset S_0$, we may use a subset of the $(C_j)$ to generate a pair of pants decomposition for $f(S)$. This subset can be labelled as $(C_{j_k})$ and we, for brevity, will denote it by $(\beta_k)$.

Suppose $(\eta_j)$ are the cuffs of a pants decomposition of $S$. Then each $\eta_j$ is homotopic to $f^{-1}(\beta_k)$ for some $k$ and vice versa. Hence, if we assume for a contradiction that $\ell_S [ \eta_{j_m} ] \to \infty$, it follows via Lemma \ref{thm:wolpert} that $\ell_{f(S)} [ \beta_{k_m} ] \to \infty$. 

Since $f(S) \supset S_0$, the subordination principle for the hyperbolic metric implies that if $\gamma$ is any path in $S_0$, then $\ell_{f(S)} ( \gamma ) \leq \ell_{S_0} (\gamma)$. In particular, we conclude that $\ell _{S_0} [ \beta_{k_m} ] \to \infty$. This contradicts \eqref{eq:cj}.
\end{proof}

\subsection{Towards uniformly disconnected}\label{sec:HimpliesUD} 

Here we prove the sufficient direction of Theorem \ref{thm:UD-hyper}.

\begin{proposition}\label{prop:UDsuff}
Let $X \subset \R^2$ be a Cantor set and suppose that the Riemann surface $S=\S^2\setminus X$ has a pants decomposition $(P_j)$ where the cuffs $(\alpha_j)$ satisfy $\sup_j\ell [ \alpha_j ] <L<\infty$. Then $X$ is $c$-uniformly disconnected for some $c$ depending only on $L$.
\end{proposition}


%

Recall the symbolic notation from Section \ref{sect:sn}.

\begin{lemma}\label{lem:UDreldist}
A totally bounded metric space $X$ is uniformly disconnected if and only if there exists a set $\mathcal{W}\subset \{1,2\}^*$, a constant $\d>0$ and a collection of subsets $\{X_w :w\in\mathcal{W}\}$ with the following properties. 
\begin{enumerate}
\item The empty word $\varepsilon \in \mathcal{W}$ and $X_{\varepsilon} = X$.  
\item If $wi \in \mathcal{W}$ for some $i\in \{1,2\}$ and $w\in\{1,2\}^*$, then $w\in\mathcal{W}$ and $X_{wi}\subset X_w$.
\item If $X_w$ is a point for some $w\in\mathcal{W}$, then $w1\in\mathcal{W}$, $w2 \not\in\mathcal{W}$, and $X_{w1}=X_w$. 
\item If $X_w$ has at least two points for some $w\in\mathcal{W}$, then $w1,w2 \in \mathcal{W}$, $X_w = X_{w1} \cup X_{w2}$, and
\begin{equation}\label{eq:maxreldist}
\dist(X_{w1},X_{w2}) \geq\d \max\{\diam{X_{w1}}, \diam{X_{w2}}\}.
\end{equation}
\end{enumerate}
The constant of uniform disconnectedness and $\d$ are quantitatively related.
\end{lemma}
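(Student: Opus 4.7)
The plan is to prove both implications by direct construction.

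For the forward direction (uniform disconnectedness implies the existence of the tree), I will build $\{X_w\}$ recursively, starting from $X_\varepsilon = X$. At each node $X_w$ of diameter $D > 0$ containing at least two points, I pick $x^\ast, y^\ast \in X_w$ with $d(x^\ast, y^\ast) \geq 2D/3$ (possible since $\diam X_w = D$) and apply the uniform disconnectedness of $X$ at scale $r = D/3 < \diam X$ to obtain a clopen set $X_{x^\ast, r} \subset X$ with $\diam X_{x^\ast, r} \leq r$ and $\dist(X_{x^\ast, r}, X \setminus X_{x^\ast, r}) \geq r/c = D/(3c)$. Setting $X_{w1} = X_{x^\ast, r} \cap X_w$ and $X_{w2} = X_w \setminus X_{w1}$ yields two disjoint nonempty clopen subsets of $X_w$; nonemptiness of $X_{w2}$ follows because $d(x^\ast, y^\ast) > \diam X_{x^\ast, r}$ forces $y^\ast \in X_{w2}$. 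Since $\max\{\diam X_{w1}, \diam X_{w2}\} \leq D$, we get $\dist(X_{w1}, X_{w2}) \geq D/(3c) \geq \max\{\diam X_{w1}, \diam X_{w2}\}/(3c)$, verifying \eqref{eq:maxreldist} with $\delta = 1/(3c)$. Singletons are handled per condition (iii) by setting $X_{w1} = X_w$.

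For the backward direction, fix $x \in X$ and $r \in (0, \diam X)$, and follow the unique path $X = X_{v_0} \supset X_{v_1} \supset \cdots$ of tree nodes containing $x$. A preliminary step is to show $\diam X_{v_k} \to 0$: if instead $\diam X_{v_k} \geq c_0 > 0$ for all $k$, then for each $k \geq 1$ the sibling $X_{s_k}$ of $X_{v_k}$ is nonempty by condition (iv) (since $\diam X_{v_{k-1}} \geq c_0 > 0$), so picking $z_k \in X_{s_k}$ produces an infinite sequence whose pairwise distances are at least $\delta c_0$ (applying \eqref{eq:maxreldist} at the deepest common ancestor of $z_k$ and $z_{k'}$, namely $X_{v_{\min(k,k')-1}}$), contradicting totally boundedness of $X$. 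Hence there is a smallest $k \geq 1$ with $\diam X_{v_k} \leq r$, and I take $X_{x, r} := X_{v_k}$.

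The main obstacle is the separation estimate $\dist(X_{v_k}, X \setminus X_{v_k}) \geq Cr$ with $C = \min\{\delta, 1\}/3$. Writing the complement as the disjoint union $X \setminus X_{v_k} = \bigsqcup_{j=1}^k X_{s_j}$ and using $X_{v_k} \subset X_{v_j}$ together with $\diam X_{v_{j-1}} \geq \diam X_{v_{k-1}} > r$, it suffices to show $\dist(X_{v_j}, X_{s_j}) \geq C \diam X_{v_{j-1}}$ for each $j \leq k$. With the abbreviations $D_j = \diam X_{v_{j-1}}$, $a_j = \diam X_{v_j}$, $b_j = \diam X_{s_j}$, and $d_j = \dist(X_{v_j}, X_{s_j})$, the triangle inequality applied to the decomposition $X_{v_{j-1}} = X_{v_j} \cup X_{s_j}$ yields $D_j \leq a_j + b_j + d_j$. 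A short case analysis then gives the bound: if $a_j + b_j \leq 2D_j/3$, then $d_j \geq D_j/3$; otherwise $\max\{a_j, b_j\} > D_j/3$, so \eqref{eq:maxreldist} gives $d_j \geq \delta D_j/3$. In either case $d_j \geq C D_j$, yielding uniform disconnectedness of $X$ with constant $1/C = 3/\min\{\delta, 1\}$.
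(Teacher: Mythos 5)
Your proof is correct and follows the paper's overall strategy: a recursive bisection of $X$ for the forward direction, and, for the converse, first showing that diameters shrink to zero along infinite tree paths via total boundedness, then proving a uniform separation estimate. The only deviations are executional --- in the forward direction you apply uniform disconnectedness of $X$ itself at a well-chosen point rather than to the subset $X_w$ (sidestepping the observation that subsets inherit uniform disconnectedness), and your case split on whether $\diam X_{v_j} + \diam X_{s_j} \le \tfrac{2}{3}\diam X_{v_{j-1}}$ is a slightly cleaner route to the separation bound than the paper's triangle-inequality estimate $\diam X_w \le (1+2\delta^{-1})\dist(X_{w1}, X_{w2})$, though both yield the same quantitative conclusion.
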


\begin{proof}
Assume first that $X$ is $c$-uniformly disconnected. 
Set $X_{\varepsilon} = X$. Assume now that for some $w\in\{1,2\}^*$, we have defined a nonempty set $X_w \subset X$. If $X_w$ is a single point, then set $X_{w1}=X_w$. Assume now that $X_w$ contains at least two points and fix $x\in X_w$. By the uniform disconnectedness of $X_w$, there exists $E \subset X_w$ such that $x\in E$, $\diam{E}\leq \frac12\diam{X_w}$ and $\dist(E,X_w\setminus E)\geq (2c)^{-1}\diam{X_w}$. Set $X_{w1} = E$ and $X_{w2}=X_w\setminus E$. Note that
\[ \dist(X_{w1},X_{w2}) \geq (2c)^{-1}\diam{X_w} \geq (2c)^{-1}\max\{\diam{X_{w1}}, \diam{X_{w2}}\}.\]
Setting $\mathcal{W}$ to be the set of all words $w \in \{1,2\}^*$ for which $X_w$ has been defined, it is easy to see that $\{X_w : w\in\mathcal{W}\}$ satisfies (i)--(iv) with $\d = (2c)^{-1}$.

Suppose now that there exists $\mathcal{W}\subset\{1,2\}^*$, $\d>0$ and a collection $\{X_w: w\in\mathcal{W}\}$ satisfying (i)--(iv). We first show that if $(i_n)$ is a sequence in $\{1,2\}$ such that $i_1\cdots i_n \in \mathcal{W}$ for all $n\in\N$, then $\lim_{n\to \infty}\diam{X_{i_1\cdots i_n}} =0$. Assume for a contradiction that there exists $d>0$ and a sequence $(i_n)$ in $\{1,2\}$ such that $i_1\cdots i_n \in \mathcal{W}$ and $\diam{X_{i_1\cdots i_n}} >d $ for all $n\in\N$. Fix $x_1 \in X_{\varepsilon} \setminus X_{i_1}$ and for each $n\in\N$ fix $x_n \in X_{i_1\cdots i_{n-1}} \setminus X_{i_1\cdots i_n}$. By (\ref{eq:maxreldist}), for any distinct $i,j \in \N$, $|x_i - x_j| \geq\d d$. Then, the set $\{x_n : n\in\N\}$ is not totally bounded and we reach a contradiction.


We prove now that $X$ is uniformly disconnected. If $X$ contains a single point, then the claim is trivial. Assume now that $\diam{X}>0$ and let $x \in X$ and $r \in (0,\diam{X})$. Let $w \in \mathcal{W}$ be the maximal word (in word-length) such that $x\in X_w$ and $\diam{X_w}\geq r$.
Write $w=i_1\cdots i_n$ and assume that $x\in X_{wi_{n+1}}$ where $i_{n+1}\in\{1,2\}$. Setting $E = X_{wi_{n+1}}$, we have that $\diam{E} < r$ while
\begin{align*} 
\dist(E,X \setminus E) &= \min_{j=1,\dots,n+1} \dist(X_{i_1\cdots i_j}, X_{i_1\cdots i_{j-1}} \setminus X_{i_1\cdots i_j})\\
&\geq \min\left\{ \dist(X_{w1},X_{w2}), \d\min_{j=1,\dots,n} \diam{X_{i_1\cdots i_j}}\right\}\\
&= \min\left\{ \dist(X_{w1},X_{w2}), \d\diam{X_{w}}\right\}.
\end{align*}
By the triangle inequality and (\ref{eq:maxreldist}),
\[ \diam{X_w} \leq \diam{X_{w1}} + \dist(X_{w1},X_{w2}) + \diam{X_{w2}} \leq (1+2\d^{-1})\dist(X_{w1},X_{w2}).\]
Therefore,
\[ \dist(E,X\setminus E) \geq \min\left\{\d,(1+2\d^{-1})^{-1}\right\}\diam{X_w} \geq (1+2\d^{-1})^{-1}r\]
and $X$ is $c$-uniformly disconnected with $c=(1+2\d^{-1})^{-1}$.
\end{proof}

We now show Proposition \ref{prop:UDsuff} and thus complete the proof of Theorem \ref{thm:UD-hyper}.


\begin{proof}[{Proof of Proposition \ref{prop:UDsuff}}]
We assume that each $\alpha_j$ has been chosen to minimize $\ell_S (\alpha_j)$ in its homotopy class. By Proposition \ref{prop:collar} there exist disjoint ring domains $R_j'$ in $\R^2\setminus X$ containing the cuffs $\alpha_j$ such that $\sup_j M(R_j') \leq m$ for some $m$ depending only on $L$. For each $j$ let $V_j'$ and $U_j'$ be the bounded and unbounded, respectively, components of $\R^2\setminus R_j'$. We relabel the ring domains $R_j'$ in the following way.

Firstly, we remark that there exist three indices $j_1,j_2,j_3$ such that $V_{j_1}',V_{j_2}',V_{j_3}'$ are mutually disjoint and $X$ is contained in $V_{j_1}'\cup V_{j_2}'\cup V_{j_3}'$. For $l=1,2,3$, we denote $R_{l,\varepsilon}':=R_{j_l}'$. Inductively, assume that for some $l\in \{1,2,3\}$ and some $w\in\{1,2\}^*$ we have defined $R_{l,w}' = R_{j}'$ for an index $j$. There exist two indices $i_1,i_2$ such that 
\begin{enumerate}
\item $V_{i_1}',V_{i_2}' \subset V_j'$ and $V_{i_1}'\cap V_{i_2}' = \emptyset$; 
\item if $V_i' \subset V_j'$ for some $i$, then $V_i'\subset V_{i_1}'$ or $V_i' \subset V_{i_2}'$.
\end{enumerate}
Set now $R_{l,w1}':=R_{i_1}'$ and $R_{l,w2}':=R_{i_2}'$. 

If for some $j$, $l\in\{1,2,3\}$ and $w\in \{1,2\}^*$ we have defined $R_{l,w}' = R_j'$, then define $V_{l,w}':=V_j'$ and $U_{l,w}':=U_j'$. Set also $X_{l,w} = X\cap V_{l,w}'$. It is easy to see that for each $l=1,2,3$, the collection $\{X_{l,w}: w\in\{1,2\}^*\}$ satisfies (i)--(iii) of Lemma \ref{lem:UDreldist} for $X_{l,\varepsilon}$.

Fix now $l\in\{1,2,3\}$.
By (\ref{eq:Loewner}) we have that there exists $d>0$ depending only on $m$ such that for all $w\in\{1,2\}^*$
\begin{equation*} 
\dist(U_{l,w}',V_{l,w}') \geq d\diam{V_{l,w}'}.
\end{equation*}
Therefore, for each $w\in\{1,2\}^*$
\begin{align*} 
\dist(X_{l,w1},X_{l,w2})  \geq \max_{i=1,2} \dist(X_{l,wi},\partial V_{l,wi}') &\geq  \max_{i=1,2}\dist(V_{l,wi}', U_{l,wi}')\\
&\geq d\max_{i=1,2}\diam{V_{l,wi}'}  \\
&\geq d\max_{i=1,2} \diam{X_{l,wi}}.
\end{align*}

Working as above, we can deduce that for all distinct $l,l'\in\{1,2,3\}$, we have
\[ \dist(X_{l,\varepsilon},X_{l',\varepsilon}) \geq d \max\{ \diam{X_{l,\varepsilon}}, \diam{X_{l',\varepsilon}}\}.\]
Since $X$ is compact, by Lemma \ref{lem:UDreldist}, $X$ is $C$-uniformly disconnected with $C$ depending only on $d$, hence only on $m$, hence only on $L$. 
\end{proof}


\section{Bi-Lipschitz paths}\label{sec:BLpaths}

Our strategy to proving Theorem \ref{thm:main1} is to use bi-Lipschitz paths to yield the required decomposition.
We recall the following definition from \cite{FM}.

\begin{definition}
\label{bilippath}
Let $(X,d_{X})$ be a metric space.
A path $H:[0,1] \rightarrow LIP(X)$
is called a \emph{bi-Lipschitz path} if for every $\epsilon >0$, there exists $\delta >0$ such that if $s,t \in [0,1]$ with $\av s-t \av < \delta$, the following two conditions hold:
\begin{enumerate}
\item for all $x \in X$, $d_{X}(H_{s} \circ H_{t}^{-1}(x),x) <\epsilon$;
\item we have that $H_{s} \circ H_{t}^{-1}$ is $(1+\epsilon)$-bi-Lipschitz with respect to $d_{X}$.
\end{enumerate}
\end{definition}

In this paper bi-Lipschitz paths are denoted by capital letters $F,G,H, \cdots$. Given two bi-Lipschitz maps $f,g:X\to X$, two bi-Lipschitz paths $F,G: [0,1] \to LIP(X)$, and a subset $E\subset X$, we define
\begin{enumerate}
\item the concatenation of $F$ with $G$ to be the biLipschitz path $H: [0,1] \to LIP(X)$ with $H_t = F_{2t}$ for $t\in [0,1/2]$ and $H_{t} = G_{2t-1}$ for $t\in [1/2,1]$, and we may then concatenate finitely many bi-Lipschitz paths in the obvious way;
\item the restriction $F|E : [0,1] \to LIP(E)$ by $(F|E)_t = F_t|E$;
\item the composition $F\circ G$ by $(F\circ G)_t = F_t \circ G_t$ for all $t\in [0,1]$;
\item the composition $f\circ F\circ g$ by $(f\circ F\circ g)_t = f\circ F_t \circ g$ for all $t\in [0,1]$.
\end{enumerate}

We emphasize that in (iii) and (iv) here, the compositions need not be bi-Lipschitz paths. Much of our work will involve showing that our constructions are made carefully enough that when we do need to compose or conjugate, we do still have a bi-Lipschitz path. For illustrative purposes, we include examples where (iii) and (iv) fail to give a bi-Lipschitz path.

\begin{example}
\label{ex:1}
Let $L>1$, $B = B(0,1/3) \subset \R^2$ and let $f:\R^2\to \R^2$ be an $L$-bi-Lipschitz map which is the identity on $\R^2 \setminus B$. Then define
\begin{equation*} 
\widetilde{f}(z) = \begin{cases}
 f( z-n) + n, & \text{$z\in B(n,1/3)$, $n\in \{0,1,2,\ldots \}$} \\ 
z,  & \text{otherwise.} \end{cases} 
\end{equation*}
Clearly $\widetilde{f}$ is also an $L$-bi-Lipschitz map.

Set $F_t =  e^{i\pi t} z$ for $t\in [0,1]$ and $H = \text{Id}\circ F\circ \tilde{f}$ where $\text{Id}:\R^2 \to \R^2$ is the identity map. Then,
\[ H_t^{-1} \circ H_s (z) = \widetilde{f}^{-1} ( e^{i(s-t)\pi} \widetilde{f}(z)).\]
Suppose $\epsilon <L-1$, $\delta >0$ and $|s-t| = \delta$. Then there exists $N\in \N$ large enough that $ B(Ne^{i(s-t)\pi} , 1/3) \cap B(N,1/3) = \emptyset$. Hence on $B(N,1/3)$ we have that $H_t^{-1} \circ H_s$ agrees with a composition of a rotation and $\widetilde{f}$. This means that $H_t^{-1} \circ H_s$ is not $(1+\epsilon)$-bi-Lipschitz and hence $H$ is not a bi-Lipschitz path.
We conclude that $\text{Id}\circ F\circ \tilde{f}$ is not a bi-Lipschitz path.

Using the same example and setting $G:[0,1] \to LIP(\R^2)$ being the constant path $\tilde{f}$ we see that $F\circ G$ is not a bi-Lipschitz path. Hence, compositions of bi-Lipschitz paths are not always bi-Lipschitz paths.
\end{example}

It is worth pointing out that in a bi-Lipschitz path, the elements are bi-Lipschitz with uniform constant.

\begin{lemma}
\label{lem:pathbound}
Suppose $H:[0,1] \to LIP(X)$ is a bi-Lipschitz path. Then there exists $L>1$ such that $H_t$ is an $L$-bi-Lipschitz map for each $t\in [0,1]$.
\end{lemma}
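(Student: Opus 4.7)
The strategy is to exploit condition (ii) of Definition \ref{bilippath} together with the compactness of $[0,1]$ to write every $H_t$ as a composition of a bounded number of factors, each of which is either a fixed bi-Lipschitz map or a near-isometry.

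First I would apply Definition \ref{bilippath} with $\epsilon = 1$ to produce a $\delta > 0$ such that $H_s \circ H_t^{-1}$ is $2$-bi-Lipschitz whenever $|s-t| < \delta$. Choose $N \in \N$ with $1/N < \delta$ and set $t_k = k/N$ for $k = 0, 1, \dots, N$, so that consecutive mesh points are always within $\delta$ of each other. Since $H_0 \in LIP(X)$, there exists some $L_0 \geq 1$ (depending only on the path, not on $t$) such that $H_0$ is $L_0$-bi-Lipschitz.

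Next, given an arbitrary $t \in [0,1]$, let $k \in \{0,1,\dots,N\}$ be the largest index with $t_k \leq t$, so that $|t - t_k| < \delta$ as well. Then I would use the telescoping identity
\[
H_t \;=\; (H_t \circ H_{t_k}^{-1}) \circ (H_{t_k} \circ H_{t_{k-1}}^{-1}) \circ \cdots \circ (H_{t_1} \circ H_{t_0}^{-1}) \circ H_0.
\]
By the choice of mesh and of $\delta$, each of the $k+1 \leq N+1$ factors of the form $H_{t_j} \circ H_{t_{j-1}}^{-1}$ or $H_t \circ H_{t_k}^{-1}$ is $2$-bi-Lipschitz, and the trailing factor $H_0$ is $L_0$-bi-Lipschitz. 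Composing, $H_t$ is $L_0 \cdot 2^{N+1}$-bi-Lipschitz, and this constant depends only on the path $H$ (through $\delta$ and $L_0$), not on $t$. Setting $L = L_0 \cdot 2^{N+1}$ completes the argument.

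There is no real obstacle: the argument is a standard compactness plus telescoping trick, and the only mild care needed is to ensure that the number of factors in the telescope is controlled independently of $t$, which follows because $N$ is determined by the fixed choice $\epsilon = 1$.
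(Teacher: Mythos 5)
Your argument is correct and is essentially the same as the paper's: both invoke Definition \ref{bilippath}(ii) with $\epsilon = 1$ to obtain $\delta$, then telescope $H_t$ into $O(1/\delta)$ factors each $2$-bi-Lipschitz together with the $L_0$-bi-Lipschitz map $H_0$. The only cosmetic difference is that you fix a uniform mesh $t_k = k/N$ up front while the paper phrases the same telescoping as an induction over the intervals $[j\delta,(j+1)\delta)$.
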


\begin{proof}
Clearly $H_0$ is an $L_0$-bi-Lipschitz map for some $L_0>1$. Set $\epsilon =1$ and the corresponding $\delta>0$ so that condition (ii) holds. In particular, for every $t_1\in(0,\delta)$, by condition (ii) applied to $(H_{t_1} \circ H_0^{-1}) \circ H_0$, the map $H_t$ is $2L_0$-bi-Lipschitz. Next, for every $t_2\in [\delta, 2\delta)$, there exists $t_1 \in(0,\delta)$ with $|t_2 - t_1| < \delta$. Applying condition (ii) to $(H_{t_2} \circ H_{t_1}^{-1}) \circ  ( H_{t_1} \circ H_0^{-1}) \circ H_0$, we see that $H_{t_2}$ is $2^2L_0$-bi-Lipschitz.

Continuing inductively, we see that for any $t\in [0,1]$, $H_t$ is $2^{\lfloor1/\delta \rfloor +1} L_0$-bi-Lipschitz.
\end{proof}

Next we show that if the restrictions of $H$ on three sets whose union is $\R^2$ are bi-Lipschitz paths, then $H$ is a bi-Lipschitz path quantitatively.

\begin{lemma}\label{lem:partition}
Let $A,B,C \subset \R^2$ be closed sets such that $\R^2 = A\cup B\cup C$. Suppose that for any $\e>0$, there exists $\d>0$ such that if $s,t\in [0,1]$ with $|s-t| <\delta$, then the two conditions in Definition \ref{bilippath} hold simultaneously for $H|A$, $H|B$, $H|C$. Then
the two conditions in Definition \ref{bilippath} also hold for $H$ with the same $\d$.
\end{lemma}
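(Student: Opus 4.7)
The plan is to verify both conditions of Definition \ref{bilippath} for $H$ itself for any $s, t \in [0, 1]$ with $|s - t| < \d$; set $f := H_s \circ H_t^{-1}: \R^2 \to \R^2$. Condition (i) is essentially immediate: since $A \cup B \cup C = \R^2$ and $H_t$ is a homeomorphism, any $x \in \R^2$ lies in some $H_t(X)$ with $X \in \{A, B, C\}$, and the hypothesis applied to $H|X$ then yields $d(f(x), x) < \e$.

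The substantive part is condition (ii): $f$ must be $(1+\e)$-bi-Lipschitz on all of $\R^2$. If $x, y$ lie in a common $H_t(X)$, both inequalities follow directly from the hypothesis on $H|X$. The main obstacle---and the only case requiring genuine work---is when no single $H_t(X)$ contains both $x$ and $y$. I would handle this by joining $x$ to $y$ via the Euclidean line segment $\g: [0, r] \to \R^2$ (with $r = |x - y|$, parameterized by arc length) and controlling $f \circ \g$ by a Lipschitz gluing argument along the interval $[0, r]$.

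The key auxiliary fact is the following one-dimensional gluing principle: if $g: [0, r] \to \R^2$ is continuous and $L$-Lipschitz on each of finitely many closed sets $F_1, \ldots, F_m$ whose union is $[0, r]$, then $|g(r) - g(0)| \leq L r$. To prove it, let $T := \{\tau \in [0, r] : |g(\tau) - g(0)| \leq L\tau\}$, a closed set containing $0$, and set $\tau^* := \sup T$; it suffices to rule out $\tau^* < r$. In that case, since $\bigcup_j F_j$ covers $[\tau^*, r]$, a pigeonhole argument yields an index $j$ and a sequence $\tau_n \to (\tau^*)^+$ with $\tau_n \in F_j$ for all $n$; closedness of $F_j$ forces $\tau^* \in F_j$, and the Lipschitz bound on $F_j$ combined with the triangle inequality gives
\[
|g(\tau_n) - g(0)| \leq |g(\tau_n) - g(\tau^*)| + |g(\tau^*) - g(0)| \leq L(\tau_n - \tau^*) + L\tau^* = L\tau_n,
\]
placing $\tau_n \in T$ with $\tau_n > \tau^*$ and contradicting maximality.

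Applying this principle with $L = 1 + \e$ and the closed cover $F_X := \g^{-1}(H_t(X))$, $X \in \{A, B, C\}$---on each of which $f \circ \g$ is $(1+\e)$-Lipschitz, since $\g$ is an isometry and $f$ restricted to $H_t(X)$ is $(1+\e)$-bi-Lipschitz by hypothesis---yields the upper bound $|f(x) - f(y)| \leq (1 + \e)|x - y|$. The matching lower bound follows by the symmetric argument applied to $f^{-1} = H_t \circ H_s^{-1}$, now using the closed cover $H_s(A), H_s(B), H_s(C)$ of $\R^2$ and the hypothesis on $H|A$, $H|B$, $H|C$ with the roles of $s$ and $t$ interchanged.
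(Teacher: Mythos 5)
Your proof is correct, and it reaches the conclusion by a genuinely different technical route than the paper's. The paper also works along the segment $[x,y]$, but instead of invoking a general gluing lemma it picks explicit subdivision points: after assuming WLOG that $x\in A$ and $y\in B\setminus A$, it takes the last point $z\in [x,y]\cap A$ (so $A\cap([z,y]\setminus\{z\})=\emptyset$), observes that $z$ must lie in $B$ or $C$ by closedness of $B\cup C$, and, if $z\in C$ with $y\notin C$, further takes the last point $w\in [z,y]\cap C$, which similarly lands in $B$. The triangle inequality over the resulting two or three subsegments -- each with both endpoints in a common one of $A,B,C$ -- gives the Lipschitz bound. Your supremum argument ("continuity method") is a more abstract version of the same idea: it sidesteps the explicit case analysis and, notably, works unchanged for a cover by \emph{any} finite number of closed sets, whereas the paper's argument is tailored to three sets and would grow case-by-case with more. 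Both proofs handle condition (i) pointwise and treat the lower Lipschitz bound by symmetry in $s,t$; the paper leaves the symmetry step implicit ("it suffices to show \dots\ is $(1+\epsilon)$-Lipschitz") while you spell it out. One small expository point: your closed cover $F_X=\gamma^{-1}(H_t(X))$ and the ensuing Lipschitz claim rely on the fact that $H_t(X)=X$ for $X\in\{A,B,C\}$, which is implicit in the hypothesis that $H|X$ is a path in $LIP(X)$; stating this explicitly would make the application of the hypothesis to $H|X$ cleaner.
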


\begin{proof}
Fix $\e>0$ and let $\d>0$ such that  the two conditions in Definition \ref{bilippath} hold simultaneously for $H|A$, $H|B$, $H|C$. Let $s,t \in [0,1]$ such that $|s-t|<\d$. If $x\in \R^2$, then without loss of generality we may assume that $x\in A$ and we have
\[ |H_s \circ H_t^{-1}(x) - x| = |(H|A)_s \circ (H|A)_t^{-1}(x) - x| < \e\]
and $H$ satisfies (i). 

For (ii), it suffices to show that $H_s\circ H_t^{-1}$ is $(1+\e)$-Lipschitz. Let $x,y \in \R^2$. If both $x$ and $y$ belong to the same set from $A,B,C$, then (ii) follows immediately. Assume without loss of generality that $x\in A$ and $y\in B\setminus A$. Let $z \in [x,y] \cap A$ such that $A\cap ([y,z] \setminus \{z\}) = \emptyset$. Since $y\in B\setminus A$, we have that $z\neq y$. 

There are now two cases. First, if $z\in B$ then
\begin{align*}
|H_{s} \circ H_{t}^{-1}(x) -H_{s} \circ H_{t}^{-1}(y) |&\leq   |H_{s} \circ H_{t}^{-1}(x) -H_{s} \circ H_{t}^{-1}(z) |\\
&\quad +  |H_{s} \circ H_{t}^{-1}(z) -H_{s} \circ H_{t}^{-1}(y) |\\  
&\leq   |(H|A)_{s} \circ (H|A)_{t}^{-1}(x) -(H|A)_{s} \circ (H|A)_{t}^{-1}(z) |\\
&\quad +  |(H|B)_{s} \circ (H|B)_{t}^{-1}(z) -(H|B)_{s} \circ (H|B)_{t}^{-1}(y) |\\
&\leq (1+\e)(|x-z| + |z-y|)\\
&= (1+\e)|x-y|.
\end{align*}
Second, if $z\in C$ then we have two sub-cases. If also $y\in C$, we have the same argument as above, with the role of $B$ played by $C$. If $y\notin C$, then let $w\in C\cap [z,y] $ be such that $C\cap ( [w,y] \setminus  \{ w \} ) = \emptyset$. We have $w\neq y$ by construction. Since $z\in A\cap C$ and $w \in B\cap C$, we have
\begin{align*} 
|H_{s} \circ H_{t}^{-1}(x) -H_{s} \circ H_{t}^{-1}(y) | &\leq   |H_{s} \circ H_{t}^{-1}(x) -H_{s} \circ H_{t}^{-1}(z) |\\
&\quad +  |H_{s} \circ H_{t}^{-1}(z) -H_{s} \circ H_{t}^{-1}(w) |\\  
&\quad +  |H_{s} \circ H_{t}^{-1}(w) -H_{s} \circ H_{t}^{-1}(y) |\\
 &\leq   |(H|A)_{s} \circ (H|A)_{t}^{-1}(x) -(H|A)_{s} \circ (H|A)_{t}^{-1}(z) |\\
&\quad +  |(H|C)_{s} \circ (H|C)_{t}^{-1}(z) -(H|C)_{s} \circ (H|C)_{t}^{-1}(w) |\\  
&\quad +  |(H|B)_{s} \circ (H|B)_{t}^{-1}(w) -(H|B)_{s} \circ (H|B)_{t}^{-1}(y) |\\
&\leq (1+\e)(|x-z| + |z-w| + |w-y|)\\
&= (1+\e)|x-y|.\qedhere
\end{align*}
\end{proof}

Our next result involves the removability of a Cantor set for a bi-Lipschitz path.

\begin{proposition}
\label{prop:remov}
Let $X \subset \R^2$ be a Cantor set.
For each $0\leq t\leq 1$, suppose $F_t:\R^2 \to \R^2$ is a continuous mapping such that $F|\R^2 \setminus X$ is a bi-Lipschitz path. Then $F$ extends to a bi-Lipschitz path on $\R^2$.
\end{proposition}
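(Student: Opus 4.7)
The key observation is that since $X$ is nowhere dense, $\R^{2}\setminus X$ is a dense subset of $\R^{2}$, and continuous functions are determined by their values on a dense set. By Lemma~\ref{lem:pathbound} applied to the bi-Lipschitz path $F|\R^{2}\setminus X$, there is a uniform constant $L>1$ such that every $F_{t}|\R^{2}\setminus X$ is $L$-bi-Lipschitz. The whole proof is then just a density/continuity argument that transfers every required property from $\R^{2}\setminus X$ to $\R^{2}$.

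\textbf{Step 1: each $F_{t}$ is $L$-bi-Lipschitz on $\R^{2}$.} Given $x,y\in\R^{2}$, choose sequences $x_{n},y_{n}\in\R^{2}\setminus X$ with $x_{n}\to x$, $y_{n}\to y$; the estimate $L^{-1}|x_{n}-y_{n}|\le |F_{t}(x_{n})-F_{t}(y_{n})|\le L|x_{n}-y_{n}|$ passes to the limit by continuity of $F_{t}$.

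\textbf{Step 2: each $F_{t}$ is a homeomorphism of $\R^{2}$.} The map $(F_{t}|\R^{2}\setminus X)^{-1}$ is itself $L$-Lipschitz, hence uniformly continuous on the dense subset $\R^{2}\setminus X$, so it admits a unique continuous extension $G_{t}:\R^{2}\to\R^{2}$. Both $F_{t}\circ G_{t}$ and $G_{t}\circ F_{t}$ are continuous and equal to the identity on $\R^{2}\setminus X$; since this set is dense and both compositions are continuous on $\R^{2}$, they equal the identity everywhere. Thus $F_{t}$ is a bijection of $\R^{2}$ with continuous inverse, i.e.\ a homeomorphism. Orientation-preservation extends from the dense open set $\R^{2}\setminus X$ to $\R^{2}$ (for example, by local degree being locally constant on $\R^{2}\setminus X$, which fills the plane densely; or by Brouwer invariance of domain applied to small neighborhoods). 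So $F_{t}\in LIP(\R^{2})$.

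\textbf{Step 3: verification of Definition~\ref{bilippath} for $F$ on $\R^{2}$.} Fix $\epsilon>0$ and let $\delta>0$ be the constant supplied by the bi-Lipschitz path hypothesis for $F|\R^{2}\setminus X$ corresponding to $\epsilon/2$. For $s,t\in[0,1]$ with $|s-t|<\delta$, the map $F_{s}\circ F_{t}^{-1}$ equals $F_{s}\circ G_{t}$, which is continuous on $\R^{2}$. The estimates $|F_{s}\circ F_{t}^{-1}(x)-x|<\epsilon/2$ and the $(1+\epsilon/2)$-bi-Lipschitz inequality hold for all points in the dense subset $\R^{2}\setminus X$; approximating arbitrary points of $\R^{2}$ by sequences in $\R^{2}\setminus X$ and invoking continuity of $F_{s}\circ F_{t}^{-1}$, these estimates pass to the closure with the non-strict bound $\epsilon/2<\epsilon$ and $1+\epsilon/2<1+\epsilon$, which is exactly what is needed.

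\textbf{Expected obstacle.} There is no deep obstruction here; the only thing one must be careful about is Step~2, namely confirming that the continuous extension $F_{t}$ is actually a self-homeomorphism of $\R^{2}$ rather than merely a continuous and bi-Lipschitz self-map. Constructing the inverse $G_{t}$ by uniformly-continuous extension and then checking $F_{t}\circ G_{t}=G_{t}\circ F_{t}=\mathrm{id}$ via density handles this cleanly, and from that point every remaining verification is a one-line limit argument.
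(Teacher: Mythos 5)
Your proposal is correct and follows essentially the same route as the paper: invoke Lemma~\ref{lem:pathbound} to get a uniform bi-Lipschitz constant $L$ on $\R^2\setminus X$, then transfer the bound and the two conditions of Definition~\ref{bilippath} from the dense set $\R^2\setminus X$ to all of $\R^2$ by approximating by sequences and using continuity. Your Step~2 spelling out that the limiting map is a self-homeomorphism (and orientation-preserving) is a small amount of bookkeeping the paper leaves implicit, but it does not change the underlying argument.
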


\begin{proof}
First, it is well-known that a bi-Lipschitz map on $U$ can be extended to a bi-Lipschitz map on the metric closure of $U$.
Hence the hypothesis that $F$ is a bi-Lipschitz path on $\R^2 \setminus X$, and Lemma \ref{lem:pathbound} imply that there exists $L\geq 1$ so that each $F_t$ is $L$-bi-Lipschitz on $\R^2$.

Next, we show that property (i) holds in the definition of a bi-Lipschitz path. Suppose $\epsilon >0$ is given and find $\delta >0$ so that if $|s-t| < \delta$ then for any $z \in \R^2 \setminus X$, $|F_t(z) - F_s(z)| <\epsilon /2$. If $x\in X$, find a sequence $(x_n)$ in $\R^2 \setminus X$ with $x_n \to x_0$. Then if we choose $N\in \N$ so that $|x-x_n| < \epsilon /4L$ for $n \geq N$, we have
\begin{align*}
|F_t(x) - F_s(x) | & \leq |F_t(x) - F_t(x_n)| + |F_t(x_n) - F_s(x_n)| + | F_s(x_n) - F_s(x)| \\
&\leq 2L|x-x_n| + \epsilon /2 \\
&< \epsilon .
\end{align*}
Hence condition (i) is satisfied.

Turning now to property (ii), let $x\in X$ and $y\in \R^2$ and find sequences $(x_n), (y_n)$ in $\R^2 \setminus X$ with $x_n \to x$ and $y_n \to y$. Given $\epsilon >0$, find $\delta >0$ so that if $|s-t| <\delta$ and $z,w \in \R^2 \setminus X$ then 
\[ |F_t(z) - F_t(w)| < (1+\epsilon / 3) | F_s(z) - F_s(w)|.\]
Next, choose $N\in \N$ large enough that if $n\geq N$ then $|x-x_n| < L^{-2}\epsilon |x-y| / 3$ and $|y-y_n| < \epsilon L^{-2} |x-y| / 3$. Hence
\[ |F_t(x) - F_t(x_n) | \leq L|x-x_n| < \epsilon(3L)^{-1} |x-y| \leq \epsilon|F_s(x) - F_s(y)|/3\]
and
\[ |F_t(y_n) - F_t(y)| \leq L | y_n-y| < \epsilon(3L)^{-1} |x-y| \leq \epsilon |F_s(x) - F_s(y)|/3.\]
It follows that
\begin{align*}
|F_t(x) - F_t(y)| &\leq |F_t(x) - F_t(x_n) | + |F_t(x_n) - F_t(y_n)| + |F_t(y_n) - F_t(y)| \\
&< ( 1+ \epsilon ) | F_s(x) - F_s(y) |.
\end{align*}
We conclude that property (ii) is satisfied and hence $F$ is a bi-Lipschitz path on $\R^2$.
\end{proof}

\subsection{Uniform families of bi-Lipschitz paths}

For the construction in the proof of Theorem \ref{thm:main1}, it will be useful to consider collections of bi-Lipschitz paths with uniform control. To that end we make the following definition.

\begin{definition}
\label{def:unifpath}
A collection $\mathcal{H}$ of bi-Lipschitz paths $H: [0,1] \rightarrow LIP(X)$ on a common metric space $X$ is a {\it uniform family of bi-Lipschitz paths} if
\begin{enumerate}[(i)]
\item there exists $L\geq 1$ so that $H_0$ has isometric distortion bounded above by $L$ for all $H\in \mathcal{H}$,
\item given $\epsilon >0$, there exists $\delta >0$ so that if $s,t\in [0,1]$ with $|s-t| <\delta$ then the two conditions in Definition \ref{bilippath} hold simultaneously for all $H \in \mathcal{H}$.
\end{enumerate}
\end{definition}

It is clear from Definition \ref{def:unifpath} and Lemma \ref{lem:pathbound} that there is a uniform bound on the isometric distortion of any map from any path in a uniform family of bi-Lipschitz paths. We have the following composition result.

\begin{lemma}
\label{lem:pathcomp}
Let $\mathcal{H}$ and $\mathcal{G}$ be two uniform families of bi-Lipschitz paths so that for each $G\in \mathcal{G}$ and each $t\in[0,1]$, $G_t$ is an isometry. Then the family $\mathcal{F} = \{ G\circ H : G\in \mathcal{G},H \in \mathcal{H} \}$ is a uniform family of bi-Lipschitz paths.
\end{lemma}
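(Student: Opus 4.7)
The plan is to verify both conditions in Definition \ref{def:unifpath} for $\mathcal{F} = \{G \circ H : G \in \mathcal{G}, H \in \mathcal{H}\}$, using the key structural fact that pre- and post-composition with isometries preserves both the isometric distortion of a map and the distance $|h(x)-x|$ for any map $h$.

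For condition (i), I would observe that $(G \circ H)_0 = G_0 \circ H_0$; since $G_0$ is an isometry and $H_0$ has distortion at most some uniform $L$ (by the hypothesis on $\mathcal{H}$), the composition has distortion at most $L$, uniformly over $\mathcal{F}$.

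The bulk of the argument is condition (ii). Given $\epsilon > 0$, let $\delta_{\mathcal{H}}$ and $\delta_{\mathcal{G}}$ be the moduli provided by the uniform family property of $\mathcal{H}$ and $\mathcal{G}$ respectively, for the tolerance $\epsilon/2$ in (i) and tolerance $\epsilon$ in (ii). Set $\delta = \min\{\delta_{\mathcal{H}}, \delta_{\mathcal{G}}\}$ and take $s,t \in [0,1]$ with $|s-t| < \delta$. The calculation
\[
(G \circ H)_s \circ (G \circ H)_t^{-1} = G_s \circ (H_s \circ H_t^{-1}) \circ G_t^{-1}
\]
is the main identity. Since $G_s$ and $G_t^{-1}$ are isometries, the bi-Lipschitz constant of this composition equals that of $H_s \circ H_t^{-1}$, which is at most $1+\epsilon$ by the uniform family property of $\mathcal{H}$. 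This handles the distortion condition.

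For the pointwise condition, I would use the triangle inequality with the intermediate point $G_s \circ G_t^{-1}(x)$:
\[
|G_s \circ H_s \circ H_t^{-1} \circ G_t^{-1}(x) - x| \leq |H_s \circ H_t^{-1}(G_t^{-1}(x)) - G_t^{-1}(x)| + |G_s \circ G_t^{-1}(x) - x|,
\]
where the first term has used that $G_s$ is an isometry. The first summand is at most $\epsilon/2$ by the uniform family property of $\mathcal{H}$ applied to the point $G_t^{-1}(x)$, and the second is at most $\epsilon/2$ by the uniform family property of $\mathcal{G}$. Summing gives the required bound, with $\delta$ depending only on $\epsilon$ and not on the particular choice of $H \in \mathcal{H}$ or $G \in \mathcal{G}$.

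There is no real obstacle here; the whole lemma rests on the fact that isometries are invisible to the two conditions defining a bi-Lipschitz path. The only thing one must be slightly careful about is that the hypothesis that \emph{each} $G_t$ is an isometry (not just $G_0$) is genuinely needed: without it, the cancellation $|G_s(a) - G_s(b)| = |a-b|$ fails and the bi-Lipschitz constant of $G_s \circ H_s \circ H_t^{-1} \circ G_t^{-1}$ would accumulate an extra factor from the distortion of $G_s$ and $G_t^{-1}$, which cannot be controlled as $|s-t| \to 0$.
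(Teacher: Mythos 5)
Your proof is correct and takes essentially the same approach as the paper: both use the triangle inequality with one intermediate point for condition (i) and the fact that pre- and post-composition by isometries preserves bi-Lipschitz constants for condition (ii). The only cosmetic difference is that you phrase everything in terms of the composition $F_s\circ F_t^{-1} = G_s\circ(H_s\circ H_t^{-1})\circ G_t^{-1}$, whereas the paper works directly with $|F_t(z)-F_s(z)|$ and $|F_t(z)-F_t(w)|$ and lets the equivalent change of variables be implicit.
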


\begin{proof}
Given $\epsilon >0$, find $\delta >0$ so that both conditions in Definition \ref{bilippath} and Definition \ref{def:unifpath} hold for $|s-t| <\delta$, all $H\in \mathcal{H}$ and all $G \in \mathcal{G}$. Fixing $G \in \mathcal{G}$ and $H \in \mathcal{H}$, let $F  = G\circ H$. Then using the fact that $G_t$ is an isometry,
\begin{align*}
| F_t(z) - F_s(z) | &= |G_t(H_t(z)) - G_s(H_s(z)) | \\
&= | G_t(H_t(z)) - G_t(H_s(z)) + G_t(H_s(z)) - G_s(H_s(z)) | \\
&\leq | H_t(z) - H_s(z) | + |G_t(H_s(z)) -G_s(H_s(z)) | \\
& \leq 2 \epsilon,
\end{align*}
which verifies that condition (i) of Definition \ref{bilippath} holds uniformly for all paths in $\mathcal{F}$. For condition (ii), we have
\begin{align*}
|F_t(z) - F_t(w)| &= |G_t(H_t(z)) - G_t(H_t(w))| \\
&= |H_t(z) -H_t(w)| \\
& \leq (1+\epsilon) |H_s(z) - H_s(w)| \\
&= (1+\epsilon) |G_s(H_s(z)) - G_s(H_s(w))| \\
&= (1+\epsilon) |F_s(z) -F_s(w)|,
\end{align*}
which verifies that condition (ii) of Definition \ref{bilippath} holds uniformly for all paths in $\mathcal{F}$. Finally, since $G_0$ is a isometry,
and the isometric distortion of $H_0$ is uniformly bounded above, it follows that the same is true for any $F \in \mathcal{F}$.
\end{proof}

For our next result, we see that a family of conjugates of a bi-Lipschitz path by controlled dilations is uniform.

\begin{lemma}
\label{lem:pathdil}
Let $F:[0,1] \to LIP(\R^n)$ be a bi-Lipschitz path and let $c\geq 1$. The family 
\[\mathcal{F} = \{ \phi\circ F \circ \phi^{-1}  : \text{$\phi$ is a similarity of $\R^n$ with scaling factor at most $c$}\}\] 
is a uniform family of bi-Lipschitz paths.
\end{lemma}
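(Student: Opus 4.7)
The strategy is to verify directly the two clauses of Definition \ref{def:unifpath} by exploiting the fact that conjugation by a similarity is a completely rigid operation: it preserves bi-Lipschitz constants exactly, and it only rescales translation-type errors by the similarity's scaling factor, which is bounded uniformly by $c$.

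First I would fix notation. Any similarity of $\R^n$ with scaling factor at most $c$ can be written as $\phi(x)=\lambda Ax+b$ with $A$ orthogonal, $b\in\R^n$, and $0<\lambda\leq c$. Writing $H_t:=\phi\circ F_t\circ\phi^{-1}$ for a given such $\phi$, the key algebraic observation is that
\[
H_s\circ H_t^{-1} \;=\; \phi\circ\bigl(F_s\circ F_t^{-1}\bigr)\circ\phi^{-1},
\]
so everything reduces to pushing estimates on $F_s\circ F_t^{-1}$ through a single conjugation by $\phi$.

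Next I would verify condition (i) of Definition \ref{def:unifpath}. Since $\phi^{-1}$ scales distances by $\lambda^{-1}$ and $\phi$ by $\lambda$, the bi-Lipschitz (in particular isometric) distortion of $H_0$ equals that of $F_0$; the uniform constant $L$ from Definition \ref{def:unifpath}(i) can therefore be taken to be the isometric distortion of $F_0$ itself, independent of the choice of $\phi$.

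Then I would verify the two clauses of Definition \ref{bilippath} uniformly in $\phi$. For the displacement bound, one checks immediately that
\[
|H_s\circ H_t^{-1}(x)-x| \;=\; \lambda\,\bigl|F_s\circ F_t^{-1}(\phi^{-1}(x))-\phi^{-1}(x)\bigr|,
\]
so given $\epsilon>0$ I use the fact that $F$ is a bi-Lipschitz path to pick $\delta>0$ (depending only on $F$ and $c$) forcing the bracketed quantity to be less than $\epsilon/c$ for all relevant arguments; multiplying by $\lambda\leq c$ yields the required bound on $H$. For the bi-Lipschitz estimate on $H_s\circ H_t^{-1}$, the two factors of $\lambda$ from $\phi$ and $\phi^{-1}$ cancel exactly:
\[
\frac{|H_s\circ H_t^{-1}(x)-H_s\circ H_t^{-1}(y)|}{|x-y|}
=\frac{|F_s\circ F_t^{-1}(\phi^{-1}(x))-F_s\circ F_t^{-1}(\phi^{-1}(y))|}{|\phi^{-1}(x)-\phi^{-1}(y)|},
\]
so shrinking $\delta$ if necessary to make $F_s\circ F_t^{-1}$ $(1+\epsilon)$-bi-Lipschitz transfers this property verbatim to $H_s\circ H_t^{-1}$. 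Since both choices of $\delta$ depend only on $F$, $c$, and $\epsilon$, and not on the particular $\phi$, the family $\mathcal{F}$ is uniform.

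There is no real obstacle here; the content of the lemma is essentially the scaling-invariance of the bi-Lipschitz condition, together with the quantitative observation that translation errors scale by at most $c$. The only thing to be mildly careful about is confirming that the similarity genuinely contributes \emph{at most} a factor of $c$ (and never a factor of $c^{-1}$) to the displacement estimate, which is what forces us to cap $\lambda$ from above rather than from below.
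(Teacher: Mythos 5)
Your proof is correct and follows essentially the same approach as the paper: verifying both clauses of Definition \ref{def:unifpath} by observing that conjugation by a similarity scales displacements by the factor $\lambda \leq c$ while exactly preserving bi-Lipschitz constants. The only cosmetic difference is that you work directly with the clean identity $H_s\circ H_t^{-1}=\phi\circ(F_s\circ F_t^{-1})\circ\phi^{-1}$ to bound $|H_s\circ H_t^{-1}(x)-x|$, whereas the paper bounds the equivalent quantity $|H_t(z)-H_s(z)|$; both yield the same conclusion.
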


\begin{proof}
Fix $\phi:\R^n \to\R^n$ to be a similarity of scaling factor $\lambda \leq c$.

First, suppose $F_0$ has isometric distortion $L_0$. Then we have
\[ |\phi \circ F_0 \circ \phi^{-1}(z) - \phi \circ F_0 \circ \phi^{-1}(w)| = \lambda | F_0\circ \phi^{-1}(z) - F_0\circ \phi^{-1}(w) |\]
from which it easily follows that $\phi \circ F_0 \circ \phi^{-1}$ is $L_0$-bi-Lipschitz.

Next, given $\epsilon >0$, find $\delta >0$ so that the two conditions in Definition \ref{bilippath} hold for $F_t$. 
If $s,t \in [0,1]$ with $|s-t |<\delta$, then
\[ |\phi \circ F_t \circ \phi^{-1}(z) - \phi \circ F_s \circ \phi^{-1}(z))| = \lambda | F_t\circ \phi^{-1}(z) - F_s\circ \phi^{-1}(z) | <\lambda \epsilon \leq c\epsilon\]
and we conclude property (i) of Definition \ref{bilippath} holds uniformly in $\mathcal{F}$. Finally, 
\begin{align*}
|\phi \circ F_t \circ \phi^{-1}(z) - \phi \circ F_t \circ \phi^{-1}(w)| &= \lambda | F_t \circ \phi^{-1}(z) - \circ F_t \circ \phi^{-1}(w) | \\
&\leq \lambda (1+\epsilon) | F_t \circ \phi^{-1}(z) - \circ F_t \circ \phi^{-1}(w) | \\
&= (1+\epsilon) |\phi \circ F_s \circ \phi^{-1}(z) - \phi \circ F_s \circ \phi^{-1}(w)|,
\end{align*}
from which we conclude condition (ii) in Definition \ref{bilippath} holds uniformly in $\mathcal{F}$.
\end{proof}

\subsection{Bi-Lipschitz paths on triangles}

As part of our construction, we will be using specific bi-Lipschitz paths which deform triangles in $\R^2$. Let $T$ be a triangle in $\C$. If the vertices are $w_1,w_2,w_3$, taken in counterclockwise order, then we may also denote this triangle by $T(w_1,w_2,w_3)$. In our construction, there will be two triangles $T_1$ and $T_2$ which share a vertex, and a bi-Lipschitz path $G: [0,1] \to LIP(\R^2)$ such that $G_0$ is the identity in $T_1$ and $G_1$ is an affine map from $T_1$ onto $T_2$. We will focus on constructing $G_t$ inside $T_1$.

After conjugating by an affine map, we may assume that $T_1$ and $T_2$ share $0$ as a vertex, that $T_1 = T(0,1,a)$ and $T_2 = T(0,c,b)$ for $0<\arg a < \pi$, $0\leq \arg c < \arg b \leq \pi$ and $ \arg b - \arg c < \pi$. These restrictions ensure that neither $T_1$ nor $T_2$ degenerate to line segments.

\begin{proposition}
\label{prop:tripath}
There exists a bi-Lipschitz path $G: [0,1]\to LIP( T_1)$ such that $G_0$ is the identity and $G_1$ is the map given by
\[ G_1(z) = \left ( \frac{ b-\overline{a}c}{a-\overline{a} } \right ) z + \left ( \frac{ ac-b}{a-\overline{a} } \right ) \overline{z}.\]
\end{proposition}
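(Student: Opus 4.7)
The formula given for $G_1$ is easily verified (by checking the values at $z = 0, 1, a$) to be the unique real-affine map $F : \R^2 \to \R^2$ sending $0 \mapsto 0$, $1 \mapsto c$, $a \mapsto b$. Writing $F(z) = \alpha z + \beta \bar z$ with $\alpha = (b - \bar a c)/(a - \bar a)$ and $\beta = (ac - b)/(a - \bar a)$, one computes that the Jacobian of $F$ viewed in $GL(2, \R)$ equals $|\alpha|^2 - |\beta|^2$. The hypotheses $0 < \arg a < \pi$ and $0 \leq \arg c < \arg b \leq \pi$ ensure that $T_1$ and $T_2$ are both non-degenerate, counterclockwise-oriented triangles, so $F$ is orientation-preserving with positive Jacobian equal to the ratio of the signed areas of $T_2$ and $T_1$; in particular $F \in GL^+(2, \R)$.

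The naive straight-line interpolation $G_t := (1-t)\operatorname{id} + t F$ does not work in general: one can produce $a, b, c$ meeting all the hypotheses (for instance $a = 1 + i\epsilon$ with $\epsilon$ small, $c = i$, $b = -1$) for which the Jacobian of the interpolant becomes non-positive at some intermediate $t \in (0, 1)$, so $G_t$ fails to be a homeomorphism there. My plan is instead to interpolate the images of the two non-fixed vertices in polar coordinates. Define $v_1(t) := |c|^{t} \exp(it \arg c)$ joining $1$ to $c$, and $v_2(t) := |a|^{1-t} |b|^{t} \exp\bigl( i ( (1-t) \arg a + t \arg b ) \bigr)$ joining $a$ to $b$, and let $G_t$ be the unique real-affine map sending $0 \mapsto 0$, $1 \mapsto v_1(t)$, $a \mapsto v_2(t)$. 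The triangle $T(0, v_1(t), v_2(t))$ has positive signed area precisely when $\arg v_2(t) > \arg v_1(t)$, i.e.\ when $(1-t)\arg a + t(\arg b - \arg c) > 0$, and this is immediate from $\arg a > 0$ and $\arg b > \arg c$. Hence each $G_t$ lies in $GL^+(2, \R)$, with $G_0 = \operatorname{id}$ and $G_1 = F$ by construction.

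Finally I would verify the bi-Lipschitz path conditions by a uniform-continuity argument. Writing $G_t(z) = \alpha(t) z + \beta(t) \bar z$ with $\alpha, \beta$ depending continuously on $t$, the composition $G_s \circ G_t^{-1}$ is a linear map whose matrix entries depend continuously on $(s, t) \in [0, 1]^2$ and coincide with the identity on the diagonal $\{ s = t \}$. By compactness, its singular values converge to $1$ uniformly as $|s - t| \to 0$, which yields the $(1 + \epsilon)$-bi-Lipschitz estimate in the definition of a bi-Lipschitz path; since $T_1$ is bounded and the map is linear, the pointwise $\epsilon$-closeness to the identity on $T_1$ follows from the same bound. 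The main point to watch, and where the argument hypotheses on $a, b, c$ are crucially used, is the positivity of the signed area of the intermediate triangles $T(0, v_1(t), v_2(t))$ — once that is in hand, the rest is essentially automatic.
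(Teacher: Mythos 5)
Your proof is correct and, more importantly, it repairs a genuine gap in the paper's argument. The paper interpolates the moving vertices along Euclidean line segments, $\gamma_1(t) = (1-t) + tc$ and $\gamma_2(t) = (1-t)a + tb$ (equivalently $G_t = (1-t)\,\mathrm{id} + t\,G_1$), and then asserts, just before \eqref{eq:p3}, that the intermediate triangles $T(0,\gamma_1(t),\gamma_2(t))$ do not degenerate. Your example $a = 1+i\epsilon$, $c=i$, $b=-1$ satisfies all the stated hypotheses ($0<\arg a<\pi$, $0\le\arg c<\arg b\le\pi$, $\arg b-\arg c<\pi$), yet the signed area $\operatorname{Im}\bigl(\overline{\gamma_1(t)}\,\gamma_2(t)\bigr) = (1-t)^2\epsilon - t + 2t^2$ drops below zero near $t=1/4$ once $\epsilon < 2/9$, so the intermediate triangle degenerates and then reverses orientation; the bound \eqref{eq:p3} (which must be read with $\inf$ in place of $\sup$ to serve its purpose in the ensuing estimates) fails, and the straight-line $G_t$ is not a path in $LIP(T_1)$.

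Your polar interpolation $v_1(t)=|c|^{t}e^{it\arg c}$, $v_2(t)=|a|^{1-t}|b|^{t}e^{i((1-t)\arg a + t\arg b)}$ fixes this cleanly, because $\arg v_2(t)-\arg v_1(t)=(1-t)\arg a + t(\arg b - \arg c)$ is a convex combination of two numbers in $(0,\pi)$ and hence lies in $(0,\pi)$ for every $t$, so all intermediate triangles are non-degenerate and positively oriented. Your compactness verification of the two conditions in Definition \ref{bilippath} is also valid and shorter than the paper's coefficient estimates \eqref{eq:p1}--\eqref{eq:p8}: the matrix of $G_s\circ G_t^{-1}$ varies continuously on $[0,1]^2$, equals the identity on the diagonal, and $|\alpha(t)|^2-|\beta(t)|^2$ is bounded away from zero, so both the singular values and the operator distance to the identity tend to their identity values uniformly as $|s-t|\to 0$, and boundedness of the intermediate triangles gives the pointwise estimate. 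The trade-off is quantitativeness: the paper's explicit bounds display how $\delta$ depends on $a,b,c$, which is what one wants where the proposition is applied to a family of triangles in Lemma \ref{lem:1}; if your proof is adopted, uniformity over that family should be recovered by an additional compactness argument over the (compact) set of normalized triangle pairs that arise there.
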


\begin{proof}
First, every real-linear map in $\C$ is of the form $Az+B\overline{z}$ for $A,B \in \C$, and since we require our maps to be orientation-preserving, we have $|A| > |B|$. Given $T_1 = T(0,1,a)$ and $T_2 = (0,c,b)$, it is elementary to check that
\[ g_1(z) = \left ( \frac{ b-\overline{a}c}{a-\overline{a} } \right ) z + \left ( \frac{ ac-b}{a-\overline{a} } \right ) \overline{z}\]
fixes $0$, maps $1$ to $c$ and maps $a$ to $b$.

Set $\gamma_1(t) = ct + (1-t)$ for $0\leq t \leq 1$ and $\gamma_2(t) = bt + (1-t)a$ for $0\leq t \leq 1$. Then define 
\[ G_t(z) = \left ( \frac{ \gamma_2(t) - \overline{a} \gamma_1(t) }{a-\overline{a} } \right )z + \left ( \frac{ a\gamma_1(t) - \gamma_2(t) }{a-\overline{a} }\right ) \overline{z} .\]
For any $t\in [0,1]$, $G_t$ maps $T_1$ onto the triangle with vertices $0, \gamma_1(t)$ and $\gamma_2(t)$.

For ease of notation, we define the map $h:= G_t\circ G_s^{-1}$, which maps $T(0,\gamma_1(s),\gamma_2(s))$ onto $T(0,\gamma_1(t) , \gamma_2(t))$. Since $h(z) = \gamma_1(t) \alpha ( z/\gamma_1(s))$, where $\alpha$ maps $T(0,1,\gamma_2(s)/\gamma_1(s))$ onto $T(0,1,\gamma_2(t) / \gamma_1(t) )$, we can compute that
\begin{align}
\label{eq:p1}
h(z) &= \left ( \frac{ \gamma_2(t) / \gamma_1(t) - \overline{ \gamma_2(s) / \gamma_1(s) } }{ \gamma_2(s) / \gamma_1(s) - \overline{ \gamma_2(s) / \gamma_1(s) } } \right ) \left ( \frac{ \gamma_1(t)z}{\gamma_1(s) } \right )\\
& \qquad+ \left ( \frac{ \gamma_2(s) / \gamma_1(s) - \gamma_2(t) / \gamma_1(t)  }{ \gamma_2(s) / \gamma_1(s) - \overline{ \gamma_2(s) / \gamma_1(s) } } \right  ) \left ( \frac{ \gamma_1(t) \overline{z}}{\overline{\gamma_1(s)}} \right ) \notag.
\end{align}

We collect some estimates that we will need. First we may suppose there exists $R>0$ so that 
\begin{equation}
\label{eq:p2}
\frac{1}{R} \leq | \gamma_i(t)| \leq R,
\end{equation}
for all $t\in [0,1]$ and $i\in \{1,2\}$.
Note also that $|z| \leq R$ for $z\in T(0,\gamma_1(s), \gamma_2(s))$.
Next, since the interior of every open triangle $G_t(T(0,1,a))$ is contained in the upper half-plane and the triangles do not degenerate, there exists $r>0$ so that
\begin{equation}
\label{eq:p3}
\sup_{t\in [0,1]} \ \left |\frac{\gamma_2(t)}{\gamma_1(t)} - \overline{ \left ( \frac{ \gamma_2(t)}{\gamma_1(t) } \right ) } \right | >r .
\end{equation}
We also observe that
\begin{equation}
\label{eq:p4}
|\gamma_1(t) - \gamma_1(s)| = |c-1| \cdot |t-s| , \quad |\gamma_2(t) - \gamma_2(s) | = |b-a| \cdot |t-s |,
\end{equation}
and hence that
\begin{equation}
\label{eq:p4a}
 \left | \frac{ \gamma_1(t) }{\gamma_1(s) } \right | \leq 1 + \frac{ |\gamma_1(t) - \gamma_1(s)| }{|\gamma_1(s)|} \leq 1+ R|c-1| |t-s|.
\end{equation}
Finally, via an elementary calculation we have
\[ \frac{ \gamma_2(s) }{ \gamma_1(s)} - \frac{\gamma_2(t)}{\gamma_1(t) } = \frac{(b-ac)(s-t)}{ (1+t(c-1))(1+s(c-1) )}\]
and hence by \eqref{eq:p2} we have
\begin{equation}
\label{eq:p6}
\left | \frac{ \gamma_2(s) }{ \gamma_1(s)} - \frac{\gamma_2(t)}{\gamma_1(t) }  \right | \leq R^2 |b-ac|  |s-t|.
\end{equation}

We can now prove property (i) for showing $G$ is a bi-Lipschitz path. Given $\epsilon >0$, we choose $\delta <  \epsilon / \xi$, with $\xi$ chosen below, so that if $|t-s| <\delta$ with $s,t \in [0,1]$, then by \eqref{eq:p1},
\begin{align*}
|h(z) - z| &= \left | z \left ( \frac{ \gamma_2(t) - \gamma_2(s) }{\gamma_1(s)} + \left ( \frac{\overline{\gamma_2(s) }}{|\gamma_1(s) |^2} \right ) \cdot \left (  \gamma_1(s) - \gamma_1(t)  \right ) \right ) \left (  \frac{\gamma_2(s)}{\gamma_1(s)} - \overline{\frac{\gamma_2(s)}{\gamma_1(s)} } \right )^{-1} \right .\\
&\qquad+ \left . \overline{z} \left ( \frac{ \gamma_2(s) / \gamma_1(s) - \gamma_2(t) / \gamma_1(t)  }{ \gamma_2(s) / \gamma_1(s) - \overline{ \gamma_2(s) / \gamma_1(s) } } \right  ) \left ( \frac{ \gamma_1(t) }{\overline{\gamma_1(s)}} \right ) \right |.
\end{align*}
Using \eqref{eq:p2}, \eqref{eq:p3} , \eqref{eq:p4} and \eqref{eq:p6}, we obtain
\[ |h(z)-z| \leq \frac{R}{r} \left (  R|b-a| |t-s| +  R^3|c-1| |t-s| \right ) + \frac{R^5|b-ac|}{r} |t-s|.\] 
By choosing 
\[ \xi = \frac{R^2 |b-a|}{r} + \frac{R^4 |c-1|}{r} + \frac{R^5|b-ac|}{r},\]
we obtain $|h(z) - z| <\epsilon$ for $z\in T(0,\gamma_1(s) , \gamma_2(s))$ as required.

Next, we prove property (ii). If $h(z) = Az+B\overline{z}$ is orientation preserving, then $|A| > |B|$ and $h$ is bi-Lipschitz with isometric distortion given by
\begin{equation}
\label{eq:bldist} 
\max \left \{ |A| + |B| , \frac{1}{|A|-|B| } \right \} .
\end{equation}
In our setting, 
\[ |A| = \left | \frac{ \gamma_1(t) }{\gamma_1(s)} \right | \cdot \left | \frac{\gamma_2(t)}{\gamma_1(t)} - \overline{ \frac{\gamma_2(s)}{\gamma_1(s) } } \right | \cdot \left | \frac{\gamma_2(s)}{\gamma_1(s)} - \overline{ \frac{ \gamma_2(s)}{\gamma_1(s)} } \right |^{-1}.\]
We can compute that
\[ \frac{ \gamma_2(t) / \gamma_1(t) - \overline{ \gamma_2(s)/\gamma_1(s)} }{ \gamma_2(s) /\gamma_1(s) - \overline{ \gamma_2(s) / \gamma_1(s)} } = 1 + \frac{ \gamma_2(t)/ \gamma_1(t) - \gamma_2(s)/ \gamma_1(s) }{  \gamma_2(s) /\gamma_1(s) - \overline{ \gamma_2(s) / \gamma_1(s)} }  \]
and hence by \eqref{eq:p4a} and \eqref{eq:p6} we obtain
\begin{equation}
\label{eq:p7}
|A| \leq \left  (1+ R|c-1| |t-s| \right ) \left ( 1+ \frac{R^2 |b-ac| }{r} |t-s| \right ).
\end{equation}
We also have that
\[ |B| = \left | \frac{ \gamma_1(t) }{\overline{\gamma_1(s)}} \right |  \cdot \left | \frac{\gamma_2(t)}{\gamma_1(t)} -  \frac{\gamma_2(s)}{\gamma_1(s) }  \right | \cdot \left | \frac{\gamma_2(s)}{\gamma_1(s)} - \overline{ \frac{ \gamma_2(s)}{\gamma_1(s)} } \right |^{-1}\]
and it follows from the comment after \eqref{eq:p2}, \eqref{eq:p3} and \eqref{eq:p6} that
\begin{equation}
\label{eq:p8}
|B| \leq \frac{R^4|b-ac|}{r} |s-t|.
\end{equation}
It then follows easily from \eqref{eq:bldist}, \eqref{eq:p7} and \eqref{eq:p8} that given $\epsilon >0$ we can choose $\delta >0$ so that if $s,t\in [0,1]$ with $|s-t| <\delta$, then $G_t\circ G_s^{-1}$ is $(1+\epsilon)$-bi-Lipschitz, completing the proof that $G_t$ is a bi-Lipschitz path.
\end{proof}

\subsection{Dehn twists and conjugates}
\label{sect:f}

Suppose $R\subset \R^2$ is a ring domain. Then we can define a Dehn twist $\mathfrak{D}$ in $R$ as follows. There exists $\eta \in (0,1)$ and a conformal map bijection $g:S\to R$, where $S = B(0,1)\setminus \overline{B}(0,1-\eta)$ is a round ring. By the conformal invariance of the modulus of ring domains, $\eta$ is uniquely defined. The Dehn twist in $S$ is given in polar coordinates by
\[ \mathfrak{D}(r,\theta) = \left ( r , \theta + 2\pi\tfrac1{\eta}(1-r) \right ),\]
and then the Dehn twist in $R$ is given by $g\circ \mathfrak{D} \circ g^{-1}$.

\begin{lemma}
\label{lem:dehntwistpath}
Let $\eta >0$, and consider $S$ and $\mathfrak{D}$ as above.
For $0\leq t\leq 1$ and $re^{i\theta} \in S$, set 
\[ D_t(r,\theta) = \left ( r , \theta + 2\pi t \tfrac1{\eta}(1-r) \right ).\]
Then $D$ is a bi-Lipschitz path in $S$ connecting the identity to $\mathfrak{D}$.
\end{lemma}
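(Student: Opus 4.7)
The plan is to compute $D_t \circ D_s^{-1}$ explicitly and then directly verify properties (i) and (ii) of Definition \ref{bilippath} by elementary estimates. In polar coordinates,
\[ D_t \circ D_s^{-1}(r,\theta) = \left(r,\ \theta + \tfrac{2\pi(t-s)}{\eta}(1-r)\right), \]
that is, in Cartesian form, $D_t \circ D_s^{-1}(z) = z\,e^{i\kappa(|z|)}$ where $\kappa(r) = \tfrac{2\pi(t-s)}{\eta}(1-r)$. Note that $\kappa$ is Lipschitz with $|\kappa'(r)| = 2\pi|t-s|/\eta$, and that $|\kappa(r)| \leq 2\pi|t-s|/\eta$ uniformly on $S$ since $(1-r)/\eta \in [0,1]$.

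For condition (i), I would estimate
\[ |D_t\circ D_s^{-1}(z) - z| = |z|\,|e^{i\kappa(|z|)}-1| \leq |z|\cdot|\kappa(|z|)| \leq \frac{2\pi|t-s|}{\eta}, \]
which can be made smaller than any prescribed $\epsilon$ by shrinking $|t-s|$.

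For condition (ii), the key idea is the triangle-inequality trick for multiplication by a modulus-dependent phase:
\begin{align*}
|z_1 e^{i\kappa(|z_1|)} - z_2 e^{i\kappa(|z_2|)}| &\leq |z_1|\,|e^{i\kappa(|z_1|)} - e^{i\kappa(|z_2|)}| + |z_1 - z_2| \\
&\leq |z_1|\,|\kappa(|z_1|)-\kappa(|z_2|)| + |z_1-z_2| \\
&\leq \left(\tfrac{2\pi|t-s|}{\eta}+1\right)|z_1-z_2|,
\end{align*}
using $|z_1|\leq 1$ and $||z_1|-|z_2||\leq |z_1-z_2|$. This yields an upper Lipschitz constant of $1 + 2\pi|t-s|/\eta$. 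Since $(D_t\circ D_s^{-1})^{-1} = D_s\circ D_t^{-1}$ has the same form with $s-t$ in place of $t-s$, the same bound applies to it, giving a lower Lipschitz constant at least $(1+2\pi|t-s|/\eta)^{-1}$. Choosing $\delta = \epsilon\eta/(2\pi)$ then ensures that $D_t\circ D_s^{-1}$ is $(1+\epsilon)$-bi-Lipschitz whenever $|s-t|<\delta$.

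There is no real obstacle here: the whole argument is a direct calculation once one observes that $D_t\circ D_s^{-1}$ is simply a radius-dependent rotation with Lipschitz phase. The only minor subtlety is handling the non-holomorphic dependence on $|z|$, which is dispatched by the triangle-inequality splitting above; this avoids any appeal to the differential or Beltrami coefficient of the map. The endpoint identifications $D_0 = \mathrm{Id}$ and $D_1 = \mathfrak{D}$ are immediate from the definitions.
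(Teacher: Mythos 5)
Your proposal is correct and follows essentially the same route as the paper: expressing $D_t\circ D_s^{-1}$ as a modulus-dependent rotation, using the triangle inequality together with the Lipschitz bound on the phase $\kappa$ to verify both conditions of Definition \ref{bilippath}. The only cosmetic difference is that the paper phrases the estimate in terms of $h(r)=\tfrac1\eta(1-r)$ and computes $D_s\circ D_t^{-1}$, but the computations are identical; one small inaccuracy in your write-up is that the justification $(1-r)/\eta\in[0,1]$ yields $|\kappa(r)|\le 2\pi|t-s|$, not $2\pi|t-s|/\eta$, though since $\eta\in(0,1)$ the looser bound you state still holds and the argument is unaffected.
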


\begin{proof}
For convenience, for $1-\eta \leq r \leq 1$, set $h(r) =  \frac1{\eta}(1-r)$. Let $z = re^{i\theta} \in S$. If $s,t \in [0,1]$, we have
\begin{align*} 
|D_t(z) - D_s(z) | &= |ze^{2\pi i t h(|z|)} - ze^{2\pi i s h(|z|)}| = |z| \left | 1 - e^{2\pi i (s-t) h(|z|) } \right |.
\end{align*}
Since $|z| \leq 1$ and $h(|z|) \in [0,1]$, it is clear that the first condition in the definition of a bi-Lipschitz path is satisfied.

Next, it is clear that 
\[ D_t^{-1} ( re^{i\theta} ) = re^{ i ( \theta - 2\pi t h(r) ) },\]
and hence for $z,w\in S$ and $s,t \in [0,1]$ we have
\begin{align*}
| D_s \circ D_t^{-1} (z) - D_s \circ D_t^{-1} (w) |  &= | ze^{2\pi i h(|z|) (s-t)} - we^{2\pi i h(|w|) (s-t) }| \\
&= |z-we^{ 2\pi i (s-t)(h(|w|) - h(|z|) ) }|.
\end{align*}
Since $h$ is linear, it follows that
\[ | h(|w|) - h(|z|) |  = \tfrac1{\eta} \left | |w| - |z| \right| \leq \tfrac1{\eta}|w-z|.\]
We conclude that there exists $C>0$ independent of $z,w$ such that
\begin{align*} 
| D_s \circ D_t^{-1} (z) - D_s \circ D_t^{-1} (w) | &= |z-we^{ 2\pi i (s-t)(h(|w|) - h(|z|) ) }| \\
&\leq |z-w| + |w| \left | 1 - e^{ 2\pi i (s-t)(h(|w|) - h(|z|) ) } \right | \\
&\leq |z-w| ( 1+C|s-t| ),
\end{align*}
from which the second condition in the definition of a bi-Lipschitz path is satisfied.
\end{proof}

We will need to know that conformal conjugates of $D_t$ are also bi-Lipschitz paths. As was observed in \cite[Remark 2.6]{FM}, the conjugate of a bi-Lipschitz path on a closed manifold by a conformal map is a bi-Lipschitz path and this cannot be weakened to conjugation by a diffeomorphism. However, here we have closed ring domains, and so this remark does not immediately apply.

\begin{proposition}
\label{prop:path}
Suppose $S$ is a round annulus, $R$ is a ring domain with smooth boundary components, $g:S\to R$ is a conformal map and $F:[0,1] \to LIP(S)$ is a bi-Lipschitz path such that $F_t(S)= S$ for all $t\in [0,1]$. If $H = g \circ F \circ g^{-1}$, then $H$ is a bi-Lipschitz path with $H_t(R) = R$ for each $t \in [0,1]$.
\end{proposition}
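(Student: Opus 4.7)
The plan is to upgrade the conformal map $g$ to a $C^1$-diffeomorphism of an open neighborhood of $\overline{S}$ via boundary regularity, and then to exploit that a $C^1$ map is nearly affine at small scales. In this way, conjugating a near-identity, nearly-isometric map by $g$ produces another near-identity, nearly-isometric map, so that the bi-Lipschitz constant remains close to $1$ rather than inflating by $K^2$ as the naive estimate would suggest.

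Concretely, since the boundary components of $R$ are smooth and those of the round annulus $S$ are analytic, the Kellogg--Warschawski boundary regularity theorem gives that $g$ extends to a $C^1$-diffeomorphism $\overline{g}:\overline{S}\to\overline{R}$; a standard reflection/collar argument across the smooth boundaries further extends $\overline g$ to a $C^1$-diffeomorphism $\tilde g:U\to V$ of open neighborhoods $U\supset\overline{S}$ and $V\supset\overline{R}$. Passing to a closed neighborhood $U'\subset U$ of $\overline{S}$, there is $K\geq 1$ with $K^{-1}\leq|\tilde g'|\leq K$ on $U'$, $\tilde g$ is $K$-bi-Lipschitz on $U'$, and $\tilde g'$ is uniformly continuous on $U'$. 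The invariance $H_t(R)=R$ then follows immediately from $F_t(S)=S$ and the fact that $g$ is a bijection. Writing $h_{s,t}:=H_s\circ H_t^{-1}=g\circ f_{s,t}\circ g^{-1}$ with $f_{s,t}:=F_s\circ F_t^{-1}$, property (i) of Definition \ref{bilippath} reduces at once to the corresponding property for $F$ via the estimate $|h_{s,t}(y)-y|\leq K|f_{s,t}(g^{-1}(y))-g^{-1}(y)|$.

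The substantive step is property (ii), where the naive estimate gives only $K^2(1+\eta)$-bi-Lipschitzness, which does not tend to $1$. To sharpen this I would use a two-scale argument. Fix $\epsilon>0$ and, by uniform continuity of $\tilde g'$, choose $r>0$ small enough that the closed $r$-neighborhood of $\overline{S}$ lies in $U'$ and that $|\tilde g'(z)/\tilde g'(w)-1|<\epsilon/8$ whenever $z,w\in U'$ satisfy $|z-w|\leq 2r$. Then choose $\delta>0$ so that, for $|s-t|<\delta$, $f_{s,t}$ is $(1+\epsilon/8)$-bi-Lipschitz on $S$ and $|f_{s,t}(x)-x|<\min\{r,\,\epsilon r/(4K^2)\}$ on $S$. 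Given $y,y'\in R$, set $x:=g^{-1}(y)$ and $x':=g^{-1}(y')$. In the far case $|x-x'|\geq r$, one has $|y-y'|\geq r/K$ and the triangle inequality together with the $K$-bi-Lipschitzness of $g$ gives
\[\bigl||h_{s,t}(y)-h_{s,t}(y')|-|y-y'|\bigr|\leq K\bigl(|f_{s,t}(x)-x|+|f_{s,t}(x')-x'|\bigr)<\tfrac{\epsilon}{2}|y-y'|.\]
In the close case $|x-x'|<r$, the segments $[x,x']$ and $[f_{s,t}(x),f_{s,t}(x')]$ both lie in $U'$ and all four endpoints lie within distance $2r$ of one another. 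The holomorphic mean-value representation
\[\tilde g(a)-\tilde g(b)=(a-b)\int_0^1\tilde g'((1-\tau)b+\tau a)\,d\tau\]
applied to both $(a,b)=(f_{s,t}(x),f_{s,t}(x'))$ and $(a,b)=(x,x')$, combined with the $\epsilon/8$ uniform-continuity estimate for $\tilde g'$ and the $(1+\epsilon/8)$-bi-Lipschitzness of $f_{s,t}$, yields after a short computation that $|h_{s,t}(y)-h_{s,t}(y')|/|y-y'|\in[1-\epsilon,\,1+\epsilon]$.

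The main obstacle is precisely the close case: one must leverage the fact that the conformal map $g$ is \emph{locally nearly affine} rather than merely $K$-bi-Lipschitz, for otherwise the conjugation inflates the bi-Lipschitz constant by a factor of $K^2$. Extending $g$ to an open neighborhood and using the integral mean-value representation sidesteps the geometric nuisance that round annuli are not convex, so that straight segments between nearby points of $\overline{S}$ may exit $\overline{S}$.
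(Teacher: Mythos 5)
Your overall strategy matches the paper's: invoke boundary regularity (Kellogg--Warschawski) to get $C^1$ control of $g$ up to $\partial S$, then exploit that $g$ is \emph{locally nearly affine} so that conjugating a nearly-isometric map by $g$ does not inflate the bi-Lipschitz constant by $K^2$. The divergence is in how that local near-affineness is packaged. The paper stays entirely on $\overline{S}\times\overline{S}$, introducing the difference-quotient functions $p(z,w)=\frac{g(z)-g(w)}{z-w}-g'(w)$ and its analogue $q$ for $g^{-1}$; continuity on the compact product together with $p(w,w)=0$ gives uniform smallness near the diagonal, with no extension and no segments ever leaving $\overline{S}$. It then handles the far case by a compactness/contradiction argument using property (i) of the path. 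Your near/far split with the explicit threshold $r$ is a cleaner way to finish the far case than the paper's subsequence argument, and your observation that round annuli are non-convex (so naive chord integrals can exit $\overline{S}$) correctly identifies the geometric nuisance.

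However, there is a genuine gap in your close-case argument. Your fix for non-convexity is to extend $g$ to a $C^1$-diffeomorphism $\tilde g$ of an open neighborhood, but a $C^1$ (collar/Whitney) extension is generally \emph{not} holomorphic off $\overline{S}$ --- Schwarz reflection would require the boundary curves of $R$ to be analytic, not merely smooth. Consequently, for points outside $\overline{S}$, $\tilde g'$ is a $2\times 2$ real Jacobian, not a complex scalar, so the expression $\tilde g'(z)/\tilde g'(w)-1$ and the ``holomorphic mean-value representation'' $\tilde g(a)-\tilde g(b)=(a-b)\int_0^1 \tilde g'(\cdot)\,d\tau$ are not literally meaningful. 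The argument can be repaired --- use the matrix mean-value formula $\tilde g(a)-\tilde g(b)=\int_0^1 D\tilde g((1-\tau)b+\tau a)(a-b)\,d\tau$ and observe that $D\tilde g$ is exactly conformal on $\overline{S}$ and, by $C^0$-continuity, uniformly close to a conformal matrix on the small tube around $\overline{S}$ you are integrating over --- but as written the key ratio estimate in the close case does not follow. Alternatively, you could avoid the extension altogether and adopt the paper's difference-quotient device, which sidesteps both the convexity issue and the non-holomorphicity issue at once.
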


\begin{proof}
We start with condition (i) from Definition \ref{bilippath}. 
Since the boundary components of $R$ are assumed smooth, the Riemann map $g$ and all its derivatives extend continuously to $\partial S$, see \cite[p.24]{BeKr}. In particular, there exists an upper bound $M$ for both $|g'|$ and $|(g^{-1})'|$ on $\overline{S}$ and $\overline{R}$, respectively. Hence $g$ and $g^{-1}$ are $M$-bi-Lipschitz maps.

Given $\epsilon >0$, find $\delta >0$ so that if $s,t \in [0,1]$ satisfy $|s-t| <\delta$ then 
\[ |F_s \circ F_t^{-1}(z) - z | < \epsilon / M\] 
for all $z \in R$. Then
\begin{align*}
|H_s \circ H_t^{-1}(z) - z | &= |g\circ F_s \circ F_t^{-1} \circ g^{-1}(z) - g(g^{-1}(z)) | \\
& \leq M |F_s \circ F_t^{-1} \circ g^{-1}(z) - g^{-1}(z) | \\
& \leq \epsilon.
\end{align*}
Hence condition (i) holds.

Next, for condition (ii), given $\epsilon >0$, find $\delta >0$ so that if $s,t \in [0,1]$ with 
\begin{equation}
\label{eq:path9}
|s-t| <\delta, \quad \text{ then } F_s\circ F_t^{-1} \text{ is $(1+\epsilon)$-bi-Lipschitz.}
\end{equation}

Consider the functions $p:\overline{S} \times \overline{S} \to \C$ and $q:\overline{R} \times \overline{R} \to \C$ defined by
\begin{equation}\label{eq:path1} 
p(z,w) = 
\begin{cases}
\frac{g(z) - g(w)}{z-w} - g'(w) &\text{ if $z\neq w$}\\
0 &\text{ if $z= w$}
\end{cases}
\end{equation}
and
\begin{equation}\label{eq:path1a} 
q(z,w) = 
\begin{cases}
\frac{g^{-1}(z) - g^{-1}(w)}{z-w} - (g^{-1})'(w) &\text{ if $z\neq w$}\\
0 &\text{ if $z= w$.}
\end{cases}
\end{equation}
By differentiability of $g$ and $g^{-1}$, both $p$ and $q$ are continuous functions on compact sets in $\C^2$ and hence bounded. That is, there exists $C>0$ so that $|p(z,w)|\leq C$ for all $(z,w) \in \overline{S} \times \overline{S}$, and $|q(z,w)| \leq C$, for all $(z,w) \in \overline{R} \times \overline{R}$.
Hence given $\epsilon '>0$, there exists $r>0$ so that if $z,w\in \overline{S}$ with $|z-w|<r$, then 
\begin{equation}
\label{eq:path10}
|p(z,w)| < \epsilon '. 
\end{equation}
By reducing $r$ if necessary, by the same reasoning we can also assume that if $z,w\in \overline{R}$ with $|z-w| <r$ then 
\begin{equation}
\label{eq:path11}
|q(z,w)| < \epsilon '.
\end{equation}

Now, let $z,w\in R$ with $|z-w|<r/[M(1+\epsilon)]$. Set $u= F_s \circ F_t^{-1}\circ g^{-1}(z)$ and $v= F_s \circ F_t^{-1}\circ g^{-1}(w)$. Then since $F_s\circ F_t^{-1}$ is $(1+\epsilon)$-bi-Lipschitz and $g^{-1}$ is $M$-bi-Lipschitz, we have $|u-v| <r$. Hence by \eqref{eq:path1} we have
\begin{equation}
\label{eq:path2}
|H_s\circ H_t^{-1} (z) - H_s\circ H_t^{-1} (w) | = |g(u)- g(v)| = | g'(v ) + p(u , v ) | \cdot |  u - v| .
\end{equation}
Next, again using the fact that $F_s\circ F_t^{-1}$ is $(1+\epsilon)$-bi-Lipschitz, we obtain
\begin{equation}
\label{eq:path3} 
|  u - v| \leq (1+\epsilon)| g^{-1}(z) - g^{-1}(w) |.
\end{equation}
Using \eqref{eq:path1a}, we have
\begin{equation}
\label{eq:path4}
|g^{-1}(z) - g^{-1}(w)| = | (g^{-1})'(w) + q(z,w) | \cdot |z-w|.
\end{equation}
Combining \eqref{eq:path2}, \eqref{eq:path3} and \eqref{eq:path4}, we obtain
\begin{equation}
\label{eq:path5} 
|H_s\circ H_t^{-1} (z) - H_s\circ H_t^{-1} (w) | \leq | g'(v ) + p(u , v ) | \cdot (1+\epsilon) \cdot | (g^{-1})'(w) + q(z,w) | \cdot |z-w|.
\end{equation}
Next, we have
\begin{align*}
|g'(v)&(g^{-1})'(w)|\\ 
&= | g'(g^{-1}(w))(g^{-1})'(w) + \left [ g'(F_s(F_t^{-1}(g^{-1}(w)) )) - g'(g^{-1}(w)) \right ] (g^{-1})'(w) |.
\end{align*}
Using condition (i) of $F$ being a bi-Lipschitz path, the fact that $|(g^{-1})'|$ is bounded and the fact that $g'$ is uniformly continuous on $\overline{S}$, by shrinking $\delta$ if necessary, we may conclude that 
\begin{equation}
\label{eq:path6}
|g'(v)(g^{-1})'(w)|  \leq 1+\epsilon.
\end{equation}
Combining \eqref{eq:path9}, \eqref{eq:path10}, \eqref{eq:path11}, \eqref{eq:path5}, \eqref{eq:path6},and  the bounds for the derivatives of $g$, $g^{-1}$ we obtain
\begin{align*}
|H_s\circ H_t^{-1} (z) - H_s\circ H_t^{-1} (w) |  &\leq (1+\epsilon)\left ( |g'(v)(g^{-1})'(w)| + |p(u,v)|(g^{-1})')(w) | \right . \\
& \left . \quad + \: |q(z,w) g'(v)| + |p(u,v)q(z,w)| \right ) |z-w|\\
&\leq (1+\epsilon) \left ( (1+\epsilon) + 2M\epsilon ' + (\epsilon ')^2 \right ) |z-w|.
\end{align*}

In particular, given $\eta >0$ we can find $\delta>0$ and $r>0$ so that if $s,t \in [0,1]$ with $|s-t| < \delta $, then for any $z,w \in \overline{R}$ with $|z-w| < r$ we have
\begin{equation}
\label{eq:path7}
|H_s\circ H_t^{-1} (z) - H_s\circ H_t^{-1} (w) | \leq (1+\eta) |z-w|.
\end{equation}

To show that condition (ii) holds, suppose for a contradiction that it does not. Then we can find $\eta >0$ and sequences $s_n,t_n$ in $[0,1]$ with $|s_n-t_n| \to 0$ and sequences $z_n,w_n$ in $\overline{R}$ for which
\begin{equation}
\label{eq:path8}
\left | \frac{ H_{s_n} \circ H_{t_n}^{-1} (z_n) -  H_{s_n} \circ H_{t_n}^{-1} (w_n) }{z_n-w_n} \right | > 1+\eta
\end{equation}
for all $n$.
By passing to subsequences, we may assume that $z_n \to z_0$ and $w_n \to w_0$. If $z_0 = w_0$ then we obtain a contradiction to \eqref{eq:path7}. Otherwise, suppose $|z_0 - w_0| = \xi$ and find $N\in N$ so that if $n\geq N$ then $|z_n - w_n| >\xi/2$.
By condition (i), we have
\begin{align*}
|H_{s_n} \circ H_{t_n}^{-1} (z_n) &-  H_{s_n} \circ H_{t_n}^{-1} (w_n) | \\
&\leq
|H_{s_n} \circ H_{t_n}^{-1} (z_n)  - z_n| + |z_n - w_n| + |H_{s_n} \circ H_{t_n}^{-1} (w_n) -w_n | \\
& \leq |z_n-w_n| + 2\epsilon \\
&\leq (1+\frac{4\epsilon}{\xi}) |z_n-w_n|.
\end{align*}
Since $|s_n-t_n| \to 0$, we can choose $n$ large enough so that $4\epsilon /\xi <\eta$ and hence contradict \eqref{eq:path8}.
We conclude that condition (ii) holds and hence $H_t$ is a bi-Lipschitz path.
\end{proof}

\subsection{Interpolation in an annulus}

In this subsection, we will prove the following interpolation result.

\begin{proposition}
\label{prop:interp}
Suppose $T>1$, and let $R=\{z\in \C : 1\leq |z| \leq T \}$ with boundary components $S_1 = \{z : |z|=1 \}$ and $S_T = \{ z : |z| = T \}$. Let $P: [0,1] \to LIP(S_1)$ and $Q:[0,1] \to LIP(S_T)$ be bi-Lipschitz paths such that $P_0 = P_1$ is the identity on $S_1$, $Q_0 = Q_1$ is the identity on $S_T$, and $\arg P_t, \arg Q_t$ are strictly increasing in $t$. Then there exists a bi-Lipschitz path $F:[0,1] \to LIP(R)$, with $F_0 =F_1$ the identity on $R$ and $F_t$ extends $P_t$ and $Q_t$ for each $t$.
\end{proposition}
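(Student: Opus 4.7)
My strategy is to build $F_t$ by radial interpolation of continuous lifts of the boundary paths. First I would lift $P_t$ and $Q_t$ to paths $\widetilde{P_t},\widetilde{Q_t}\colon\R\to\R$ of bi-Lipschitz homeomorphisms, normalized by $\widetilde{P_0}=\widetilde{Q_0}=\operatorname{id}$ and satisfying the equivariance $\widetilde{P_t}(\theta+2\pi)=\widetilde{P_t}(\theta)+2\pi$ (analogously for $Q$). Lemma \ref{lem:pathbound} gives a uniform bi-Lipschitz constant $L$ for all $\widetilde{P_t},\widetilde{Q_t}$, and the strict $t$-monotonicity of $\arg P_t,\arg Q_t$ ensures uniqueness and continuity of the lifts. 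From $P_1=Q_1=\operatorname{id}$ we obtain non-negative integers $N,M$ with $\widetilde{P_1}(\theta)=\theta+2\pi N$ and $\widetilde{Q_1}(\theta)=\theta+2\pi M$. Since the requirement $F_1=\operatorname{id}$ on all of $R$ forces $N=M$ (the loop $(P_t,Q_t)$ must represent a class in $\pi_1(\mathrm{Homeo}^+(\partial R))\cong\Z^2$ that lifts through $\pi_1(\mathrm{Homeo}^+(R))\cong\Z$, generated by a rigid rotation), I treat this matching-winding condition as implicit in the hypotheses.

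With $\lambda(r):=(T-r)/(T-1)$, define
\begin{equation*}
\phi_t(r,\theta):=\lambda(r)\widetilde{P_t}(\theta)+(1-\lambda(r))\widetilde{Q_t}(\theta),\qquad F_t(re^{i\theta}):=r\exp(i\phi_t(r,\theta)).
\end{equation*}
Equivariance of the lifts gives $\phi_t(r,\theta+2\pi)=\phi_t(r,\theta)+2\pi$, so $F_t$ is a well-defined self-map of $R$; strict $\theta$-monotonicity of $\phi_t(r,\cdot)$ (a convex combination of strictly increasing bi-Lipschitz maps) makes $F_t$ a homeomorphism that preserves each circle $|z|=r$ setwise. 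The boundary conditions $F_t|_{S_1}=P_t$ and $F_t|_{S_T}=Q_t$ follow from $\lambda(1)=1$ and $\lambda(T)=0$; the endpoint conditions $F_0=F_1=\operatorname{id}$ follow from $\widetilde{P_0}=\widetilde{Q_0}=\operatorname{id}$ and from $N=M$. Uniform bi-Lipschitz bounds for $F_t$ then follow from $\partial_\theta\phi_t\in[L^{-1},L]$ a.e., $|\partial_r\phi_t|\leq 2\pi N/(T-1)$, and the smoothness of the polar chart on $R$, which is bounded away from the origin.

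For the bi-Lipschitz path conditions, given $\epsilon>0$ I would choose $\delta>0$ so that for $|s-t|<\delta$, both $\widetilde{P_s}\circ\widetilde{P_t}^{-1}$ and $\widetilde{Q_s}\circ\widetilde{Q_t}^{-1}$ are $\epsilon$-close to $\operatorname{id}$ and $(1+\epsilon)$-bi-Lipschitz. Condition (i) of Definition \ref{bilippath} is then immediate because $\phi_s-\phi_t$ is a convex combination of $\widetilde{P_s}-\widetilde{P_t}$ and $\widetilde{Q_s}-\widetilde{Q_t}$. For condition (ii), the key observation is the chain-rule identity $\widetilde{P_s}'(\theta)=(\widetilde{P_s}\circ\widetilde{P_t}^{-1})'(\widetilde{P_t}(\theta))\cdot\widetilde{P_t}'(\theta)$ (a.e.), yielding $\widetilde{P_s}'\in[(1+\epsilon)^{-1},1+\epsilon]\widetilde{P_t}'$ and similarly for $Q$; by the convex-combination structure this transfers to $\partial_\theta\phi_s\in[(1+\epsilon)^{-1},1+\epsilon]\partial_\theta\phi_t$ a.e. Since $F_s\circ F_t^{-1}$ preserves the radial coordinate, the induced map on each circle $|z|=r$ is $(1+\epsilon)$-bi-Lipschitz in the angular variable; combined with the bound $|\partial_r(\phi_s-\phi_t)|\leq 2|\lambda'(r)|\epsilon$ and a polar-to-Cartesian Jacobian computation on $R$, this yields the required $(1+O(\epsilon))$-bi-Lipschitz bound for $F_s\circ F_t^{-1}$.

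The main obstacle is the last step: converting pointwise a.e.\ derivative-ratio bounds for the boundary lifts into a Cartesian bi-Lipschitz bound for $F_s\circ F_t^{-1}$ on all of $R$. Because the lifts are only bi-Lipschitz (not $C^1$), the convex-combination ratio argument must be phrased in terms of a.e.\ derivatives and then integrated along radial and angular segments, and the mixed radial-angular cross terms in the polar Jacobian must be handled carefully; this is tractable because $R$ is bounded away from the origin, so the polar parametrization has uniformly bounded and uniformly invertible differentials.
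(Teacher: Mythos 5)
Your high-level strategy coincides with the paper's: lift the boundary circle maps to $\R$, interpolate between the lifts across the radial direction, and push down. The differences are in execution. The paper conjugates the whole annulus to the strip $\{0\leq\Re z\leq\ln T\}$ via $\exp$, proves the interpolation lemma there (Lemma \ref{lemma:realpath}) with a convex interpolation linear in $\Re z$, and transfers back with Proposition \ref{prop:path}; you instead interpolate directly in polar coordinates with weight $\lambda(r)=(T-r)/(T-1)$, linear in $r$ rather than in $\ln r$. This is an inessential reparametrization and both should work.

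The substantive difference is technical. The paper's proof of condition (ii) in Lemma \ref{lemma:realpath} never invokes a.e.\ differentiability: it writes the difference $H_s(z)-H_s(w)$ explicitly in terms of $H_t(z)-H_t(w)$ plus an error, and controls the error by the scalar estimate Lemma \ref{lem:estimate}, whose three-case analysis is precisely designed to absorb the cross term coming from the $x$-dependence of the weight. Your proposal, by contrast, goes through Rademacher and the chain rule for bi-Lipschitz lifts, obtains a.e.\ ratio bounds $\partial_\theta\phi_s\in[(1+\e)^{-1},1+\e]\partial_\theta\phi_t$ (this part is correct: positivity of the convex weights and of $\widetilde P_t',\widetilde Q_t'$ makes the ratio bound survive the convex combination), and then must assemble a global bi-Lipschitz bound for $F_s\circ F_t^{-1}$ from pointwise a.e.\ Jacobian bounds in the presence of a mixed radial--angular cross term. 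You flag this last step as the ``main obstacle'' but do not carry it out. That is exactly where the nontrivial content lives, and the paper spends a full lemma (with the case analysis of Lemma \ref{lem:estimate} and Cases 1--2 of Lemma \ref{lemma:realpath}) on it. Two concrete things you would still need to make rigorous: (a) passing from a.e.\ differential bounds to a Lipschitz bound requires a path-integration argument, which on the annulus requires a quasiconvexity observation (the annulus is not convex but is $C$-quasiconvex for some $C$), and (b) the cross term $r\,\partial_r\Phi$ is not merely bounded by $|\lambda'(r)|\cdot 2\epsilon$; you must also control the contribution $(1-\partial_\theta\phi_s/\partial_\theta\phi_t)\cdot\partial_r\phi_t$, for which you need the uniform bound $|\widetilde P_t(\theta)-\widetilde Q_t(\theta)|\leq C$ (which does hold by the $t$-monotonicity hypothesis, but should be stated). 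So the proposal is the right idea with a genuine, though self-identified, gap in the hard estimate.

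One more observation in your favor: you correctly notice that the conclusion $F_1=\operatorname{id}$ forces the boundary paths to have equal rotation numbers $N=M$, a constraint not spelled out in the statement. The paper's own proof also implicitly requires $N=M=1$, since Lemma \ref{lemma:realpath} assumes $F_1(x)=G_1(x)=x+2\pi$; in the application in Section \ref{sec:proof} both boundary loops make exactly one full rotation, so no harm is done, but your flag is accurate.
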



We start with the following fairly elementary estimate.

\begin{lemma}\label{lem:estimate}
Suppose that $c_1,c_2 >0$, $c_3\in [-3,3]$ and $a\in \R$. For any $\e>0$ and any $\d_1,\d_2 \in (-\e,\e)$, $\d_3\in (-2\e,2\e)$, we have
\[ |a+ i(c_1+c_2+c_3x) + i(\d_1c_1+\d_2c_2+\d_3a)| \leq (1+8\e)|a+ i(c_1+c_2+c_3a)|.\]
\end{lemma}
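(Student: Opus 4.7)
Writing $z = a + i(c_1+c_2+c_3 a)$ and $w = i(\delta_1 c_1 + \delta_2 c_2 + \delta_3 a)$, the triangle inequality gives $|z+w| \leq |z| + |w|$, so it suffices to prove the pointwise bound $|w| \leq 8\e|z|$. Since $w$ is purely imaginary,
\[|w| \leq \e c_1 + \e c_2 + 2\e|a| = \e(c_1+c_2) + 2\e|a|.\]
The proof then reduces to controlling both $\e(c_1+c_2)$ and $2\e|a|$ by a constant times $\e|z|$.

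The obvious a priori lower bounds on $|z|$ are $|z| \geq |a|$ (from the real part) and $|z| \geq |c_1+c_2+c_3 a|$ (from the imaginary part). The difficulty is that the imaginary part may cancel: if $a$ is large and has the opposite sign to $c_3$, the quantity $c_1+c_2+c_3 a$ could be small, so we cannot simultaneously use both $c_1+c_2 \leq 2|z|$ and $|a|\leq |z|$ unconditionally in a useful way. I will split into two cases according to the size of $|a|$ relative to $c_1+c_2$.

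Case 1: $|a| \leq (c_1+c_2)/6$. Since $|c_3| \leq 3$, we get $|c_3 a| \leq (c_1+c_2)/2$, hence by the reverse triangle inequality
\[|z| \geq |c_1+c_2+c_3 a| \geq (c_1+c_2)/2.\]
Then $\e(c_1+c_2) \leq 2\e|z|$ and $2\e|a| \leq (c_1+c_2)\e/3 \leq (2/3)\e|z|$, so $|w| \leq (8/3)\e|z|$.

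Case 2: $|a| > (c_1+c_2)/6$. Then $c_1+c_2 < 6|a|$, so $\e(c_1+c_2) + 2\e|a| < 8\e|a| \leq 8\e|z|$.

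In either case $|w| \leq 8\e|z|$, and the lemma follows. The only subtlety is picking the threshold $(c_1+c_2)/6$, chosen so that the bound $|c_3| \leq 3$ leaves room for the reverse triangle inequality on the imaginary part to produce a factor of $1/2$.
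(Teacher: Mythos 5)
Your proof is correct and follows essentially the same strategy as the paper: apply the triangle inequality to reduce to bounding the perturbation $|w|\leq \e(c_1+c_2)+2\e|a|$ by $8\e|z|$, then split into cases according to whether $|z|$ is controlled by its imaginary part (when $|a|$ is small enough that the $|c_3|\leq 3$ bound prevents cancellation in $c_1+c_2+c_3 a$) or by its real part $|a|$. The only difference is organizational — the paper splits on the sign of $ac_3$ first, yielding three cases, whereas your direct threshold $|a|\lessgtr (c_1+c_2)/6$ compresses this to two cases with the same worst-case constant $8\e$.
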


\begin{proof}
We consider three cases.

\emph{Case 1: $ac_3\geq 0$.} Then,
\begin{align*} 
 |a+ i(c_1+c_2+c_3a) &+ i(\d_1c_1+\d_2c_2+\d_3a)|\\ 
 &\leq |a+ i(c_1+c_2+c_3a)| + |\d_1|c_1+|\d_2|c_2+|\d_3||a|\\
 &\leq |a+ i(c_1+c_2+c_3a)| +(2\e)|i(c_1+c_2+c_3a)| + 2\e|a|\\
 &\leq (1+4\e)|a+ i(c_1+c_2+c_3a)|
\end{align*}

\emph{Case 2: $ac_3< 0$ and $c_1+c_2+c_3a\leq \frac12(c_1+c_2)$.} We have that 
\[ |a| > \frac1{2|c_3|}(c_1+c_2) \geq \frac16(c_1+c_2).\] 
Therefore,
\begin{align*}
|a+ i(c_1+c_2+c_3a) &+ i(\d_1c_1+\d_2c_2+\d_3a)|\\
&\leq |a+ i(c_1+c_2+c_3a)| + |\d_1|c_1+|\d_2|c_2+|\d_3||a|\\
&\leq |a+ i(c_1+c_2+c_3a)| + \e(c_1+c_2) + 2\e |a| \\
&\leq  |a+ i(c_1+c_2+c_3a)| + 8\e|a|\\
&\leq (1+8\e)|a+ i(c_1+c_2+c_3a)|.
\end{align*}

\emph{Case 3: $ac_3< 0$ and $c_1+c_2+c_3a> \frac12(c_1+c_2)$.} We have that 
\begin{align*}
|a+ i(c_1+c_2+c_3a) &+ i(\d_1c_1+\d_2c_2+\d_3a)|\\
&\leq |a+ i(c_1+c_2+c_3a)| + |\d_1|c_1+|\d_2|c_2+|\d_3||a|\\
&\leq |a+ i(c_1+c_2+c_3a)| + \e(c_1+c_2) + 2\e|a|\\
&\leq |a+ i(c_1+c_2+c_3a)| + 2\e|i(c_1+c_2+c_3a)| + 2\e|a|\\
&\leq (1+4\e)|a+ i(c_1+c_2+c_3a)|. \qedhere
\end{align*}
\end{proof}

Next, we prove an interpolation result on strips.

\begin{lemma}
\label{lemma:realpath}
Suppose that $F,G : [0,1] \to LIP(\R)$ be bi-Lipschitz paths with $F_0 (x)= G_0(x) = x$ for all $x\in \R$, $F_1(x) = G_1(x) = x+2\pi$ for all $x\in \R$, $F_t,G_t$ are $2\pi$-periodic for all $t\in [0,1]$ and $F_t(x), G_t(x)$ are both strictly increasing in $t$ for a fixed $x$. Let $M>0$ and let $S$ be the strip $S=\{ z\in \C: 0< \Re(z) < M\}$. Then there exists a bi-Lipschitz path $H:[0,1] \to LIP(\overline{S})$ which extends to $\partial S$ with $H_t(iy) = iF_t(y)$ and $H_t(M+iy) = M+iG_t(y)$ for $0\leq t\leq 1$ and $y\in \R$. Moreover, $H_0(z) = z$ and $H_1(z) = z+2\pi$ for all $z\in S$.
\end{lemma}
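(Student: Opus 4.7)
The plan is to define $H$ by a vertical linear interpolation across the strip:
$$H_t(x + iy) := x + i\Phi_{x,t}(y), \qquad \Phi_{x,t}(y) := \frac{M-x}{M}F_t(y) + \frac{x}{M}G_t(y).$$
Since $F_t, G_t$ are increasing bi-Lipschitz bijections of $\R$, so is $y \mapsto \Phi_{x,t}(y)$ for each $x \in [0,M]$. Thus $H_t$ preserves the real coordinate and is a bi-Lipschitz homeomorphism of $\overline{S}$. The required boundary and endpoint conditions $H_t(iy) = iF_t(y)$, $H_t(M+iy) = M+iG_t(y)$, $H_0 = \mathrm{id}$, and $H_1 = \mathrm{id}+2\pi$ follow immediately.

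Condition (i) of Definition \ref{bilippath} will be direct. Since $H_s \circ H_t^{-1}$ preserves the real part, for $z = x+iw$ with $y := \Phi_{x,t}^{-1}(w)$,
$$|H_s \circ H_t^{-1}(z) - z| = |\Phi_{x,s}(y) - \Phi_{x,t}(y)| \le |F_s(y) - F_t(y)| + |G_s(y) - G_t(y)|,$$
and the right side tends to $0$ uniformly in $y \in \R$ as $|s-t| \to 0$, by condition (i) applied to the paths $F$ and $G$.

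The main work is condition (ii), where I will apply Lemma \ref{lem:estimate}. For $z_1, z_2 \in \overline{S}$, set $x_j := \Re z_j$ and $y_j := \Phi_{x_j,t}^{-1}(\Im z_j)$, and assume WLOG that $y_2 \ge y_1$. Set
$$a := x_2 - x_1, \quad c_1 := \tfrac{M-x_1}{M}(F_t(y_2)-F_t(y_1)), \quad c_2 := \tfrac{x_1}{M}(G_t(y_2)-G_t(y_1)), \quad c_3 := \tfrac{1}{M}(G_t(y_2) - F_t(y_2)).$$
Then $c_1, c_2 \ge 0$, and a direct expansion of $\Phi_{x_2,t}(y_2) - \Phi_{x_1,t}(y_1)$ (splitting the difference at the intermediate point $\Phi_{x_1,t}(y_2)$) gives $z_2 - z_1 = a + i(c_1 + c_2 + c_3 a)$, and the analogous expansion at time $s$ yields
$$H_s \circ H_t^{-1}(z_2) - H_s \circ H_t^{-1}(z_1) = a + i\bigl[(c_1 + c_2 + c_3 a) + (\delta_1 c_1 + \delta_2 c_2 + \delta_3 a)\bigr],$$
where $\delta_j$ encode the change from $t$ to $s$.

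Two facts control the $\delta_j$. First, on $\R$ an orientation-preserving $(1+\eta)$-bi-Lipschitz map $U$ satisfies $|U(b) - U(a) - (b-a)| \le \eta(b-a)$ for $b \ge a$; applying this to $U = F_s \circ F_t^{-1}$ and $U = G_s \circ G_t^{-1}$ gives $|\delta_1|, |\delta_2| \le \eta$. Second, condition (i) for $F$ and $G$ gives $|\delta_3| = |(G_s - G_t)(y_2) - (F_s - F_t)(y_2)|/M \le 2\eta/M$. Moreover, the monotonicity of $F_t(y), G_t(y)$ in $t$ together with the endpoint conditions force $F_t(y) - y, G_t(y) - y \in [0, 2\pi]$, so $|c_3| \le 2\pi/M$ is bounded by a constant depending only on $M$. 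Lemma \ref{lem:estimate} (after possibly absorbing a factor depending on $M$ into the constant, since its proof extends verbatim to any bounded range of $c_3$) then gives
$$|H_s \circ H_t^{-1}(z_2) - H_s \circ H_t^{-1}(z_1)| \le (1 + C_M \eta)|z_2 - z_1|$$
for some $C_M$; choosing $|s-t|$ small forces $\eta$ small and yields the $(1+\epsilon)$-Lipschitz bound, and the inverse inequality is obtained symmetrically. The subtlety I anticipate is uniformity in the $y$-direction for all three $\delta_j$, which is ensured because $F_s - F_t$ and $G_s - G_t$ are $2\pi$-periodic so their suprema are controlled by condition (i) of $F$ and $G$ on one period.
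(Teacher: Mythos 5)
Your proposal is correct and follows the paper's proof: you use the same affine interpolation $H_t(x+iy) = x + i\Phi_{x,t}(y)$ across the strip, verify condition (i) directly from condition (i) of $F$ and $G$, and establish condition (ii) by the same algebraic decomposition of $H_s\circ H_t^{-1}(z_2) - H_s\circ H_t^{-1}(z_1)$ into $a + i(c_1+c_2+c_3 a)$ plus a small perturbation controlled by bi-Lipschitz-path estimates, then feed it into Lemma \ref{lem:estimate}. The only differences are cosmetic: you telescope through the intermediate point $\Phi_{x_1,t}(y_2)$ rather than $\Phi_{x,s}(y')$, and you explicitly flag that the range $[-3,3]$ for $c_3$ in Lemma \ref{lem:estimate} should be enlarged to $[-2\pi/M,2\pi/M]$ at the cost of a constant depending on $M$, a point the paper leaves implicit.
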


\begin{proof}
We define $H_t$ via the obvious convex interpolation in $S$. That is, we set
\[H_t(x+iy) = x + i \left ( G_t(y) + ( 1- x/M) ( F_t(y) - G_t(y) ) \right ) \]
for $0\leq t\leq 1$, $0\leq x\leq M$ and $y\in \R$. Clearly $H_0$ is the identity and $H_1$ is a translation by $2\pi i$. We need to show that $H_t$ is a bi-Lipschitz path.

We start by showing that each $H_t$ is a bi-Lipschitz map. Using Lemma \ref{lem:pathbound}, suppose that $F_t$ is $L$-bi-Lipschitz and $G_t$ is $\lambda$-bi-Lipschitz for all $t\in [0,1]$. 
Setting $z=x+iy$ and $w=x'+iy'$, we have
\begin{align*}
&|H_t(z) -H_t(w)|\\ 
&\leq |x-x'| + \left | \tfrac{x}{M} G_t(y) - \tfrac{x'}{M} G_t(y') + (1-\tfrac{x}{M} ) F_t(y) - (1-\tfrac{x'}{M} F_t(y') \right | \\
&\leq |x-x'| +(1-\tfrac{x}{M} ) |F_t(y) - F_t(y')| + \tfrac{x}{M} | G_t(y) - G_t(y') | + \frac{|x-x'|}{M} |G_t(y') - F_t(y')|\\
&\leq (1+2\pi) |x-x'| + \max\{ L , \lambda \} |y-y'| \\
&\leq \max\{ L,\lambda, 1+2\pi \} ( |x-x'| + |y-y'| )\\
&\leq 2\max\{ L,\lambda, 1+2\pi \} |z-w|.
\end{align*}
For the lower bound, we consider two cases. First, set \lefteqn{C = \min \left \{  \frac{M}{8\pi L} , \frac{M}{8\pi \lambda} , \frac12 \right \} .}

\emph{Case 1.} Suppose that $|x-x'| \geq C|y-y'|$. It follows that 
\begin{align*} 
|H_t(z) - H_t(w)| &\geq |x-x'| \geq \frac{C}{2} (|x-x'| + |y-y'|)\geq  \frac{C}{2} |z-w|.
\end{align*}

\emph{Case 2.} Suppose that $|x-x'| < C |y-y'|$. Without loss of generality, assume that $y' \leq y$. Then, 
\begin{align*}
|H_t&(z) - H_t(w)|\\
&\geq |(1-\tfrac{x}{M})(F_t(y)-F_t(y')) + \tfrac{x}{M}(G_t(y)-G_t(y')) + (G_t(y')-F_t(y'))(x-x')/M|\\
&\geq (1-\tfrac{x}{M})(F_t(y)-F_t(y')) + \tfrac{x}{M}(G_t(y)-G_t(y')) - |G_t(y')-F_t(y')||x-x'|/M\\
&\geq \min\{L^{-1},\lambda^{-1}\}|y-y'| - 2\pi |x-x'|/M \\
&\geq \left ( \min\{L^{-1},\lambda^{-1}\} - \frac{2\pi C}{M} \right ) |y-y'|\\
&\geq \frac{2\pi C}{M} |y-y'|\\
&\geq \frac{\pi C}{M} ( |x-x'| + |y-y'| )\\
&\geq \frac{\pi C}{M}|z-w|.
\end{align*}

Next, we show that $h_t$ satisfies condition (i) of Definition \ref{bilippath}. From Definition \ref{bilippath} (i), by setting $u=F_t^{-1}(x)$, it follows that given $\epsilon >0$, we may find $\delta> 0$ so that if $|s-t|<\delta$, then $|F_s(u) - F_t(u) | <\epsilon$ for all $u\in \R$. The same holds true for $G_t$. Now suppose $z\in S$ and $h_t^{-1}(z) = x+iy$. Then we have
\[ z = x + i\left ( G_t(y) + (1-\tfrac{x}{M}) (F_t(y ) - G_t(y) ) \right ) \]
and 
\[ H_s \circ H_t^{-1}(z) =x +  i\left ( G_s(y) + (1-\tfrac{x}{M}) (F_s(y ) - G_s(y) ) \right ).\]
Therefore, we obtain
\[ |H_s\circ H_t^{-1}(z) - z | = \left |x (G_t(y) - G_s(y)) + (1- \tfrac{x}{M}) (F_s(y) - F_t(y) ) \right | < \epsilon. \]
Hence condition (i) of Definition \ref{bilippath} is satisfied.

Finally, we show that $h_t$ satisfies condition (ii) of Definition \ref{bilippath}. Note that
\begin{align*} 
&H_s(z) - H_s(w)\\ 
&= (x-x') + i \left[ (1-\tfrac{x}{M})F_s(y) - (1-\tfrac{x'}{M})F_s(y') \right] + i \left[ xG_s(y) - x'G_s(y') \right]/M\\
&= (x-x') + i [ (1-\tfrac{x}{M})(F_s(y)-F_s(y')) + \tfrac{x}{M}(G_s(y)-G_s(y'))\\ 
&\quad+ (G_s(y')-F_s(y'))(x-x')/M].
\end{align*}
Fix $\e>0$. We know that there exists $\d>0$ such that if $|t-s| <\d$, then
\begin{align*}
|F_s(y)-F_s(y')| &\leq (1+\e)|F_t(y)-F_t(y')|,\\ 
|G_s(y)-G_s(y')| &\leq (1+\e)|G_t(y)-G_t(y')|
\end{align*}
and
\begin{align*}
|F_s(y)-F_t(y)| \leq \e, \quad |G_s(y)-G_t(y)| \leq \e.
\end{align*}

Therefore,
\begin{align*} 
H_s(z) &- H_s(w)\\ 
&= (x-x') + i [ (1-\tfrac{x}{M})(F_t(y)-F_t(y'))(1+\d_1) + \tfrac{x}{M}(G_t(y)-G_t(y'))(1+\d_2)\\ 
&\quad+ (G_t(y')-F_t(y') +\d_3)(x-x')/M ]\\
&= a + i(c_1 + c_2 + c_3a + \d_1 c_1 + \d_2c_2 + \d_3a)\\
&= a + i(c_1 + c_2 + c_3a) + (\d_1 c_1 + \d_2c_2 + \d_3a)i\\
&= H_t(z)-H_t(w) +  (\d_1 c_1 + \d_2c_2 + \d_3a)i
\end{align*}
where
\begin{align*}
a&= x-x'\\
c_1&= (1-\tfrac{x}{M})(F_t(y)-F_t(y'))\\
c_2&= x(G_t(y)-G_t(y'))/M\\
c_3&= (G_t(y')-F_t(y'))/M
\end{align*}
and $\d_1,\d_2,\d_3$ are functions of $x,y,x',y',s,t$ satisfying
\begin{align*}
|\d_1|<\e, \quad |\d_2|<\e, \quad |\d_3|< 2\e.
\end{align*}
Now, it follows from Lemma \ref{lem:estimate} that
\[ |H_s(z) - H_s(w)| \leq (1+8\e)|H_t(z)-H_t(w)|\]
and condition (ii) of Definition \ref{bilippath} is satisfied.
\end{proof}

We are now in a position to prove Proposition \ref{prop:interp}.

\begin{proof}[Proof of Proposition \ref{prop:interp}]
The idea is to lift via the exponential function and then use Lemma \ref{lemma:realpath}. To that end, define $\widetilde{P}$ and $\widetilde{Q}$ via the functional equations $P\circ \exp = \exp \circ \widetilde{P}$ and $Q\circ \exp = \exp \circ \widetilde{Q}$. 

Since the exponential function is conformal and has uniformly bounded derivative on the strip $S = \{ z : 0 \leq \Re(z) \leq \ln T \}$, we conclude via the same argument as in Proposition \ref{prop:path} that $\widetilde{P}$ and $\widetilde{Q}$ are bi-Lipschitz paths in the lines $\{z : \Re(z) = 0\}$ and $\{z : \Re(z) = \ln T \}$ respectively.

Applying Lemma \ref{lemma:realpath} to the strip $S = \{ z : 0 \leq \Re(z) \leq \ln T\}$ with boundary bi-Lipschitz path $\widetilde{P}$ and $\widetilde{Q}$, we obtain a bi-Lipschitz path $\widetilde{F}$ which extends the boundary bi-Lipschitz paths.

Since $\widetilde{F_t}$ is $2\pi$-periodic by construction, we obtain the required bi-Lipschitz path $F$ via $F \circ \exp = \exp \circ \widetilde{F}$, again using the fact that the exponential function has uniformly bounded derivative in $S$.
\end{proof}

\section{Bi-Lipschitz collapsing for sets of small Assouad dimension}\label{sec:collapse}


The goal in this section is to show that for a Cantor set $X\subset \R^2$ with $\dim_A X<1$, we can cover it by small topological disks that can then by collapsed via a bi-Lipschitz path into a small disk. This is the content of Proposition \ref{prop:1} below, see Figure \ref{fig:5} for a schematic.

\begin{figure}[h]
\begin{center}
\includegraphics[width=4in]{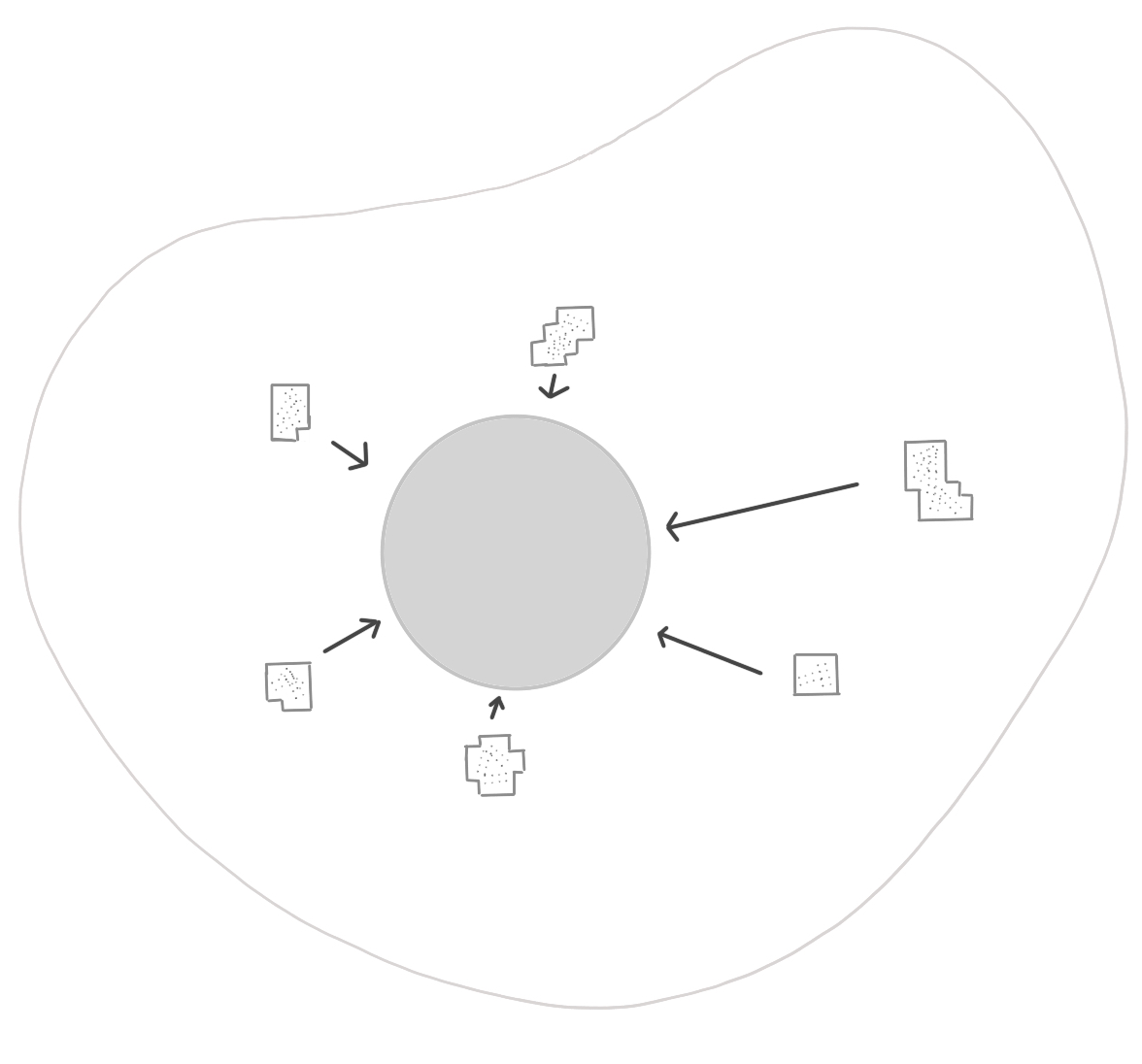}
\caption{The larger domain is $\Omega$, the shaded ball is $B$, the PL curves give the boundaries of the components of $\mathcal{T}_{\d}(X)$ and the arrows indicate that the bi-Lipschitz path $H_t$ constructed in Proposition \ref{prop:1} moves these components into $B$ in an isometric way.}
\label{fig:5}
\end{center}
\end{figure}

\begin{proposition}\label{prop:1}
Let $C>0$, $c\geq 1$, $s\in [0,1)$, $\eta \in (0,1)$ and let $\Omega \subset \R^2$ be a domain with $\diam \Omega =1$ such that for any $x,y \in \Omega$ there exists a path $\g_{x,y} : [0,1] \to \Omega$ such that $\g_{x,y}(0) = x$, $\g_{x,y}(1)=y$ and 
\begin{equation}\label{eq:cigar} 
\dist(\g_{x,y},\partial U) \geq (2c)^{-1}\min\{\dist(x,\partial \Omega), \dist(y,\partial \Omega)\}.
\end{equation}
Let $X\subset \Omega$ be $(C,s)$-homogeneous with $\dist(X,\partial\Omega) > \eta $.
There exists $\e >0$ so that if $z,w \in \Omega$ have distance at least $2\epsilon$ from $\partial \Omega$, then the disk $B(z,\e)$ can be deformed continuously and isometrically to $B(w,\e)$ in $\Omega$. There exists $\d >0$ so that if 
$B = B(z,\e) \subset \Omega$ is a disk of radius $\e$ with center $z$ satisfying $\dist (z,\partial \Omega)  \geq 2\e$, there exists a bi-Lipschitz path $H:[0,1] \to LIP(\Omega)$ such that
\begin{enumerate}
\item $H_1$ maps the closed neighborhood $\mathcal{T}_{\d}(X)$  of $X$ into $B$;
\item for each $t\in [0,1]$ and each component $D$ of $\mathcal{T}_{\d}(X)$, the map $H_t|D$ is an isometry. 
\end{enumerate}
\end{proposition}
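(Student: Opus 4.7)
For the first assertion, I would choose $\e > 0$ small enough (depending only on $c$ and $\eta$) that for any $z, w \in \Omega$ with $\dist(z, \partial \Omega), \dist(w, \partial \Omega) \geq 2\e$, the cigar path $\g_{z,w}$ from \eqref{eq:cigar} has its $\e$-tubular neighborhood contained in $\Omega$. The one-parameter family of translations $x \mapsto x + (\g_{z,w}(t) - z)$ then carries $B(z,\e)$ isometrically through $\Omega$ from $B(z,\e)$ onto $B(w,\e)$, and extends by a Lipschitz cutoff to a bi-Lipschitz path on $\Omega$ (using Proposition \ref{prop:remov} and Lemma \ref{lem:partition} to verify the path property).

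For the main assertion, Lemma \ref{lem:MM} ensures that for any $\d > 0$, each component of $\mathcal{T}_\d(X)$ is a closed polygonal domain of diameter $O(\d)$, and by $(C,s)$-homogeneity of $X$ there are at most $O(\d^{-s})$ such components, of total area $O(\d^{2-s})$. Since $s < 1$, choosing $\d$ sufficiently small makes this total area much less than the area of $B$, so the components can eventually be packed inside $B$ isometrically. Because $\dim_A X < 1$ implies (by \cite{Lu}) that $X$ is uniformly disconnected, Lemma \ref{lem:UDreldist} furnishes a binary tree $\{X_w\}_{w \in \mathcal{W}}$ of subsets of $X$ with a quantitative separation constant $\delta_0 > 0$.

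I would construct $H$ in stages indexed by the levels of this tree, processed from the leaves toward the root. Truncate the tree at the deepest level where $\diam X_w > \d$, and at each subsequent stage enclose every pair of sibling subsets $X_{w1}, X_{w2}$ in disjoint disks $D_{w1}, D_{w2}$ of radii comparable to their diameters, with $\dist(D_{w1},D_{w2})$ bounded below by $\delta_0$ times the larger radius. The first assertion, applied inside a subdomain of $\Omega$ associated to $X_w$, produces a bi-Lipschitz path that moves $D_{w1}$ and $D_{w2}$ rigidly into a single slightly larger disk $D_w$; the motion is an isometry on each component of $\mathcal{T}_\d(X)$ involved and is supported in a tube whose geometry is controlled by $\delta_0$. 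Because the enclosing disks $D_w$ at a given tree level are mutually disjoint (again by the tree's separation property), all moves at a single stage can be performed simultaneously using Lemma \ref{lem:partition}, with the identity in the complement. After reaching the root, the entire set $\mathcal{T}_\d(X)$ lies in a small disk, which is then transported into $B$ by one final application of the first assertion.

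The principal obstacle is to ensure that the bi-Lipschitz constants remain uniformly bounded across the $O(\log(1/\d))$ stages, so that their concatenation is genuinely a bi-Lipschitz path. This relies on two facts: first, the sibling-to-sibling relative separation is bounded below by $\delta_0$, so each individual stage's move has isometric distortion close to $1$ independently of the level; second, the Assouad hypothesis $s < 1$ is used precisely to guarantee that the cumulative number and total area of components are small enough to fit into $B$ without any non-isometric contraction of the components themselves. Combining these ingredients carefully with Proposition \ref{prop:remov} across $X$ and Lemma \ref{lem:partition} between adjacent tubes yields the desired bi-Lipschitz path $H$.
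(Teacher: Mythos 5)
Your handling of the first assertion (following a cigar curve and translating the disk) matches the paper, which dismisses this in one line. For the main assertion you take a genuinely different route from the paper: you invoke uniform disconnectedness (implied by $\dim_A X < 1$ via \cite{Lu}) to extract the binary tree from Lemma~\ref{lem:UDreldist}, and then merge siblings bottom-up. The paper instead works directly with the $(C,s)$-homogeneity of $X$: it applies Lemma~\ref{lem:convexsub} to the components of $\mathcal{T}_\d(X)$ to wrap them in mutually disjoint convex sets, then greedily merges these convex sets one at a time by moving one along a cigar curve (Lemma~\ref{lem:1}) until it first meets another, iterating until a single convex set remains, which is then carried into $B$. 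Both strategies are of the same general flavor, but the paper never needs the tree structure.

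There is, however, a genuine gap in your key quantitative step. You assert that Lemma~\ref{lem:MM} gives that \emph{each component} of $\mathcal{T}_\d(X)$ has diameter $O(\d)$, and you then argue via total area $O(\d^{2-s})$. Neither is correct. Lemma~\ref{lem:MM} only controls the distance from the boundary of the thickening to $X$; a single component of $\mathcal{T}_\d(X)$ can chain together many $\d$-net points of $X$ and therefore have diameter far larger than $\d$. The paper handles this by bounding $\diam D_i \leq 18\d\,\card(V \cap D_i)$ for a $\d$-net $V$, and the relevant quantity is the \emph{sum of diameters}: $\sum_i \diam D_i \leq 18\d\,\card(V) \leq 18C\d^{1-s}$. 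It is this exponent $1-s > 0$, not $2-s > 0$, that drives the whole construction. The sum-of-diameters bound is exactly what is preserved by the convex-hull merging in Lemma~\ref{lem:convexsub} (and exactly what controls the size of the enclosing disks accumulated up your tree), whereas the area bound $O(\d^{2-s})$ holds for all $s < 2$ and tells you nothing about whether the pieces can be moved into $B$ while acting isometrically on each one. Without the sum-of-diameters estimate, the final enclosing disk $D_\varepsilon$ in your scheme has no reason to have diameter below $\e$, and the argument collapses. The proposal would need to replace the diameter/area claims with the correct $\sum_i \diam D_i \lesssim \d^{1-s}$ estimate, and then verify that the disk radii accumulated up the tree are controlled by this sum.
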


A couple of remarks are in order.

\begin{remark}
First, condition (\ref{eq:cigar}) on $\Omega$ is inspired by, but slightly weaker than, the well-known $c$-John property. Second, for the rest of this section, we call curves $\g_{x,y}$ $c$-cigar curves. Finally, if $c'>c$, then there exists a piecewise linear (abbv. PL) $c'$-cigar curve $\sigma$ joining $x$ with $y$ in $\Omega$. In light of this observation, we will assume from now on that all cigar curves are PL.
\end{remark}

\subsection{Convex sets}

Given a set $E \subset \R^N$, we denote by $\Hull(E)$ the \emph{closed convex hull} of $E$, that is, the intersection of all closed convex sets that contain $E$. Such a set is itself convex and $\diam(\Hull(E)) = \diam(E)$. 

\begin{lemma}
Let $E\subset \R^N$ be a bounded set. If $x,y \in \Hull(E)$ and $|x-y| = \diam(\Hull(E))$, then $x,y \in \overline{E}$. 
\end{lemma}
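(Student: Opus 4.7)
The plan is to reduce the lemma to two standard facts about convex sets in $\R^N$: (a) any point of $\Hull(E)$ that realizes the diameter to a fixed second point must be an extreme point of $\Hull(E)$, and (b) every extreme point of $\Hull(E)$ lies in $\overline{E}$. Applied to $(x,y)$ and $(y,x)$ in turn, these two facts give the conclusion. The degenerate case $\diam(\Hull(E)) = 0$ is immediate, because then $\Hull(E)$ is a single point which coincides with $\overline{E}$ (note $\overline{E} \subset \Hull(E)$ since $\Hull(E)$ is closed and contains $E$), so I will assume $D := |x-y| > 0$.

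For step (a), I would fix $y$ and suppose $x = \lambda u + (1-\lambda) v$ with $u, v \in \Hull(E)$ and $\lambda \in (0,1)$. Writing
\[ x - y = \lambda (u-y) + (1-\lambda)(v-y),\]
the triangle inequality yields
\[ D = |x-y| \le \lambda |u-y| + (1-\lambda)|v-y| \le D,\]
so both inequalities are equalities. Hence $|u-y| = |v-y| = D$, and the equality case of the triangle inequality forces $u-y$ and $v-y$ to be nonnegative scalar multiples of one another; having equal positive length, they must be equal, so $u = v$. Thus $x$ is an extreme point of $\Hull(E)$, and the same argument shows $y$ is extreme.

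For step (b), since $E$ is bounded, $\overline{E} \subset \R^N$ is compact, and $\Hull(E) = \Hull(\overline{E})$. Let $z$ be any extreme point of $\Hull(E)$. By Carath\'eodory's theorem applied to $\overline{E}$, there are $e_1,\dots,e_{N+1} \in \overline{E}$ and $\lambda_1,\dots,\lambda_{N+1} \ge 0$ with $\sum_i \lambda_i = 1$ and $z = \sum_i \lambda_i e_i$. If some $\lambda_j \in (0,1)$, then $z = \lambda_j e_j + (1-\lambda_j) w$ with $w := (1-\lambda_j)^{-1} \sum_{i \ne j} \lambda_i e_i \in \Hull(E)$; extremality of $z$ gives $e_j = w = z$, so $z = e_j \in \overline{E}$. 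Otherwise exactly one $\lambda_i$ equals $1$, and again $z = e_i \in \overline{E}$.

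I do not expect any real obstacle here: step (a) is strict convexity of Euclidean balls, and step (b) is a routine Carath\'eodory argument. The only care needed is to separate out the trivial case $D = 0$ and to use boundedness of $E$ in step (b) so that $\overline{E}$ is compact and Carath\'eodory applies with points from $\overline{E}$ rather than arbitrary limits.
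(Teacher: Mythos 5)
Your proof is correct, and it takes a genuinely different route from the paper's. The paper argues by contradiction: assuming $x \notin \overline{E}$ with $r = \dist(x,\overline{E}) > 0$, it considers the hyperplane $P$ through $x$ orthogonal to $[x,y]$, observes that the diameter condition forces $\Hull(E)$ to lie in the closed half-space on the $y$-side of $P$, and then slices off the thin slab of $\Hull(E)$ within distance $\delta$ of $P$ (where $\delta$ comes from the geometry of the two balls $B(x,r)$ and $B(y,|x-y|)$) to produce a strictly smaller closed convex set that still contains $E$ --- contradicting the minimality in the definition of $\Hull$. You instead factor the lemma through two standard facts: (a) a point of a convex body that realizes the diameter to another fixed point must be an extreme point, via the equality case of the triangle inequality (strict convexity of Euclidean balls), and (b) every extreme point of the convex hull of a compact set belongs to that set, via Carath\'eodory. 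Your decomposition is more modular and ties the lemma to well-known convexity theory, whereas the paper's argument is elementary and self-contained, never mentioning extreme points or Carath\'eodory. One small point worth making explicit in your step (b): you apply Carath\'eodory to write an extreme point $z$ of $\Hull(E)$ as a convex combination of points of $\overline{E}$, which implicitly uses that $\mathrm{conv}(\overline{E})$ is already closed (hence equals $\Hull(E)$); this is true because $\overline{E}$ is compact and, by Carath\'eodory again, the ordinary convex hull of a compact subset of $\R^N$ is compact.
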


\begin{proof}
For a contradiction, assume that $x$ is not in $\overline{E}$. That is, $r:=\dist(x,\overline{E})>0$. Let $P \subset \R^N$ be the $(N-1)$-plane that contains $x$ and is orthogonal to the line segment $[x,y]$. Then, since $|x-y| = \diam(\Hull(E))$, it follows that $\Hull(E)$ lies on $\overline{H}$ where $H$ is one of the two components of $\R^N \setminus P$. Therefore, 
\[ E \subset (H \cap B(y,|x-y|) )\setminus B(x,r).\]
Then, setting $\d = \dist( \partial B(x,r) \cap \partial B(y,|x-y|), P)$ we have that the set
\[ \{z\in H : \dist(z,P)\geq \d\}\cap \text{Hull}(E)\]
is a convex set which contains $E$ and is a proper subset of $\Hull(E)$, which is a contradiction.
\end{proof}

\begin{lemma}\label{lem:convexsub}
Let $E_1,\dots,E_n$ be sets in $\R^N$. There exists $l\in\{1,\dots,n\}$ and there exist mutually disjoint convex closed sets $\D_1,\dots,\D_l$ in $\R^N$ 
such that each $E_i$ is contained in some $\D_j$ and
\[\sum_{j=1}^l \diam{\D_j} \leq \sum_{i=1}^n\diam{E_i}.\]
\end{lemma}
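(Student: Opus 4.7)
The plan is to start with the convex hulls of the $E_i$ and then iteratively merge any two that intersect, using the previous lemma to keep track of diameters.

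Concretely, I would initialize $\Delta_i^{(0)} := \Hull(E_i)$ for $i=1,\dots,n$. Each $\Delta_i^{(0)}$ is a closed convex set containing $E_i$ with $\diam \Delta_i^{(0)} = \diam E_i$, so at this stage $\sum_i \diam \Delta_i^{(0)} = \sum_i \diam E_i$. If the $\Delta_i^{(0)}$ are already mutually disjoint, take $l=n$ and we are done. Otherwise, pick any pair with $\Delta_i^{(0)} \cap \Delta_j^{(0)} \neq \emptyset$ and replace the two sets by $\Hull(\Delta_i^{(0)} \cup \Delta_j^{(0)})$. Iterate this procedure; since the number of sets strictly decreases at each step, the process terminates in at most $n-1$ steps with some number $l \in \{1,\dots,n\}$ of mutually disjoint closed convex sets $\Delta_1,\dots,\Delta_l$, each containing at least one of the original $\Hull(E_i)$, and hence each $E_i$ is contained in some $\Delta_j$.

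The heart of the argument is the diameter bookkeeping: I claim that whenever $A, B$ are closed bounded convex sets in $\R^N$ with $A \cap B \neq \emptyset$, one has $\diam(\Hull(A \cup B)) \leq \diam A + \diam B$. To see this, apply the previous lemma to the bounded set $A \cup B$: the diameter of $\Hull(A \cup B)$ is realized by two points $x, y \in \overline{A \cup B} = A \cup B$. If both points lie in $A$ (or both in $B$) the bound is immediate. Otherwise, say $x \in A$ and $y \in B$; fixing any $z \in A \cap B$, the triangle inequality gives
\[ |x-y| \leq |x-z| + |z-y| \leq \diam A + \diam B. \]
Applying this at each merge step shows that the sum of diameters of the current family never increases, so at termination
\[ \sum_{j=1}^l \diam \Delta_j \leq \sum_{i=1}^n \diam E_i, \]
which is the desired inequality.

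The one subtle point is ensuring the procedure is well-defined and terminates: closed bounded convex sets are compact, the convex hull of a union of two compact sets is compact (hence closed), and strict reduction in the count of sets at each step guarantees finite termination. I do not anticipate a real obstacle; the only genuine content is the diameter estimate for merged hulls, and that follows cleanly from the previous lemma.
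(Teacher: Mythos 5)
Your proof follows the paper's approach exactly: pass to convex hulls, then iteratively merge intersecting pairs, tracking diameters via the triangle inequality through a common point of the two merged sets. Two small remarks. First, the lemma does not assume the $E_i$ are bounded, yet you invoke compactness in the merge step; the paper dispatches the unbounded case at the outset by taking $l=1$ and $\Delta_1 = \R^N$, after which the rest of the argument applies to bounded sets as you describe, so you should record that reduction. Second, you reach the key estimate $\diam(\Hull(A\cup B)) \leq \diam A + \diam B$ by invoking the preceding lemma (that the diameter of $\Hull(A\cup B)$ is realized by points of $\overline{A\cup B}$), whereas the paper uses the slightly more direct fact, stated just before that lemma, that $\diam(\Hull(E))=\diam(E)$, and then applies the triangle inequality to $\diam(A\cup B)$ directly; both routes are correct and give the same bound.
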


\begin{proof}
If one of the sets $E_i$ is unbounded, then set $l=1$, $\D_1=\R^N$ and the claim is trivial.

Assume now that all sets $E_i$ are bounded. In this case, the construction of the convex sets $\D_j$ is in an inductive fashion.

\emph{Step 1.} For each $i\in\{1,\dots,n\}$, let $\D_i^{(1)} = \Hull(E_i)$. If the sets $\D_i^{(1)}$ are mutually disjoint, then set $\D_i = \D_i^{(1)}$ and the procedure terminates; if some intersect, proceed to the next step.

\emph{Inductive Step.}
Suppose that for some $k\in\{1,\dots,n-1\}$ we have defined closed convex sets $\D_1^{(k)},\dots,\D_{n-k+1}^{(k)}$ such that at least two of them intersect. In particular, let $1 \leq i_0 < j_0 \leq n-1$ be such that $\D_{i_0}^{(k)} \cap \D_{j_0}^{(k)} \neq \emptyset$.
 We now define $\D^{(k+1)}_i$ for $i\in \{1,\dots,n-k\}$ as follows:
\begin{itemize}
\item if $i < i_0$ or if $i_0<i<j_0$, then set $\D^{(k+1)}_i = \D^{(k)}_i$;
\item if $i=i_0$, then set $\D^{(k+1)}_{i_0}  = \Hull(\D^{(k)}_{i_0} \cup \D^{(k)}_{j_0})$;
\item if $j_0 \leq i \leq n-k$, then set $\D^{(k+1)}_i = \D^{(k)}_{i+1}$.
\end{itemize}
Note that
\[ \diam{\D^{(k+1)}_{i_0}} = \diam(\D^{(k)}_{i_0} \cup \D^{(k)}_{j_0}) \leq \diam{\D^{(k)}_{i_0}} + \diam{\D^{(k)}_{j_0}}.\]
If the sets $\D_i^{(k+1)}$ are mutually disjoint, then set $\D_i = \D_i^{(k+1)}$ and the procedure terminates; if some intersect, proceed to the next step.

It is clear that the procedure above will terminate in $m$ steps for some $m\in\{1,\dots,n\}$. The sets $\D_1,\dots,\D_{n-m+1}$ produced are convex, mutually disjoint, and each $E_i$ is contained in some $\D_j$. It remains to show that
\begin{equation}\label{eq:10}
\sum_{i=1}^{n-m+1} \diam{\D_i} \leq \sum_{i=1}^n\diam{E_i}.
\end{equation}
To prove (\ref{eq:10}), first note that for all $i\in\{1,\dots,n\}$, $\diam{E_i} = \diam{\D_i^{(1)}}$. Therefore, if $m=1$, then (\ref{eq:10}) follows. 

Suppose now that $m\geq 2$. Fix $k\in \{1,\dots,m-1\}$ and let $i_0,j_0 \in \{1,\dots,n-k+1\}$ be as in the construction of domains $\D_{i}^{(k+1)}$. Then,
\begin{align*}
\sum_{i=1}^{n-k+1} \diam{\D_i^{(k)}} &= \sum_{i \in\{1,\dots,n-k+1\}\setminus\{i_0,j_0\}} \diam{\D_i^{(k)}} + \diam{\D_{i_0}^{(k)}} + \diam{\D_{j_0}^{(k)}}\\
&\geq \sum_{i \in\{1,\dots,n-k\}\setminus\{i_0\}} \diam{\D_i^{(k+1)}} +  \diam{\D_{i_0}^{(k+1)}}\\
&= \sum_{i=1}^{n-k} \diam{\D_i^{(k+1)}}. 
\end{align*}
Now by induction, (\ref{eq:10}) follows.
\end{proof}

\begin{lemma}\label{lem:1}
Let $\Omega \subset\R^2$ be a domain with non-empty boundary, and let $\D \subset \Omega$ be a compact convex set with PL boundary. Let $\delta\in (0,1)$, let 
\[ 0 < r< (1-\delta) \operatorname{dist}(\D , \partial \Omega), \] 
and let $\g:[0,1] \to \Omega$ be a PL curve in $\Omega$ with $\g(0)\in \D$ and $|\g(t) - \g(0)| <r$ for all $t\in [0,1]$. Then there exists a bi-Lipschitz path $H:[0,1] \to LIP(\Omega)$ such that
\begin{enumerate}
\item for each $t\in[0,1]$, $H_t| \partial \Omega$ is the identity;
\item for each $t\in[0,1]$, $H_t| \D$ is a translation mapping with $H_t(\g(0))=\g(t)$.
\end{enumerate}
\end{lemma}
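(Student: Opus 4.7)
The plan is to construct $H_t$ explicitly as an identity-plus-cutoff perturbation of the form $H_t(x) = x + \phi(x) v_t$, where $v_t = \gamma(t) - \gamma(0)$ and $\phi$ is a Lipschitz cutoff equal to $1$ on $\Delta$ and $0$ on $\partial\Omega$. Setting $\alpha = \dist(\Delta,\partial\Omega)$, the hypothesis reads $r < (1-\delta)\alpha$, and $v_t$ is piecewise linear on $[0,1]$ with $\sup_t |v_t| < r$. Define
\[ \phi(x) = \max\!\left(0,\, 1 - \frac{\dist(x,\Delta)}{\alpha}\right), \]
which is $\alpha^{-1}$-Lipschitz on $\overline{\Omega}$, equals $1$ on $\Delta$, and vanishes on $\{x:\dist(x,\Delta)\geq\alpha\}\supset\partial\Omega$. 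With $H_t(x) = x + \phi(x)v_t$, properties (i) and (ii) of the lemma hold by construction and $H_t(\gamma(0)) = \gamma(t)$.

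The first routine task is to verify that each $H_t$ is a bi-Lipschitz homeomorphism of $\Omega$ with uniformly bounded constant. The perturbation $\phi v_t$ has Lipschitz constant at most $|v_t|/\alpha \leq 1-\delta < 1$, so $H_t$ is a bi-Lipschitz self-map of $\R^2$ with bi-Lipschitz constant at most $\max(2-\delta,\delta^{-1})$, equal to the identity outside a compact subset of $\Omega$. A triangle-inequality argument based on $\dist(x,\partial\Omega) \geq \alpha - \dist(x,\Delta)$ yields $|H_t(x) - x| \leq \phi(x) r < \dist(x,\partial\Omega)$ whenever $\phi(x) > 0$, so $H_t$ sends $\Omega$ into itself; bijectivity on $\Omega$ then follows from bijectivity on $\R^2$ together with the observation that any preimage outside $\Omega$ would lie where $\phi$ vanishes, so its image would also lie outside $\Omega$.

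For the bi-Lipschitz path conditions of Definition~\ref{bilippath}, a direct computation shows
\[ H_s \circ H_t^{-1}(z) = z + \Phi_t(z)(v_s - v_t), \qquad \Phi_t := \phi\circ H_t^{-1}, \]
where $\Phi_t$ is $(\alpha\delta)^{-1}$-Lipschitz as a composition of an $\alpha^{-1}$-Lipschitz map with a $\delta^{-1}$-Lipschitz one. Given $\epsilon>0$, continuity of $\gamma$ yields $\delta_0>0$ such that $|s-t|<\delta_0$ implies $|v_s - v_t| < \epsilon \min(1, \alpha\delta/2)$. Condition~(i) then follows from $|H_s\circ H_t^{-1}(z) - z| \leq |v_s - v_t| < \epsilon$, and condition~(ii) from the estimate that $H_s \circ H_t^{-1}$ has Lipschitz constant at most $1 + |v_s - v_t|/(\alpha\delta) < 1 + \epsilon/2$, with a symmetric lower bound.

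I do not expect a serious obstacle here: the argument is a standard identity-plus-cutoff construction, and the convexity and PL assumptions on $\Delta$ and on $\gamma$ play no essential role in the bi-Lipschitz path construction itself (they appear tailored to applications in Section~\ref{sec:collapse}, where the lemma will be invoked with $\Delta$ a convex hull of components of the thickening $\mathcal{T}_\delta(X)$ flowed along cigar curves). The one point requiring care is keeping the perturbation support inside $\Omega$ while still forcing $\phi = 1$ on $\Delta$ and $\phi = 0$ on $\partial\Omega$; the choice of $\alpha$ in the denominator of $\phi$ combined with the triangle-inequality step handles this, provided one uses the full margin $r < (1-\delta)\alpha$.
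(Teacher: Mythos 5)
Your proof is correct, and it takes a genuinely different and more elementary route than the paper's. The paper constructs $H_t$ piecewise: it builds a slightly larger convex polygon $Y$ around $\D$, triangulates the annular region $\overline{Y\setminus\D}$, applies the triangle-deformation paths of Proposition~\ref{prop:tripath} to each triangle, and then concatenates finitely many such small moves to follow the segments of the PL curve $\g$. Your cutoff construction $H_t(x) = x + \phi(x)v_t$ with $\phi = \max(0, 1 - \dist(\cdot,\D)/\alpha)$ sidesteps the triangulation entirely: the identity-plus-small-Lipschitz-perturbation format gives bi-Lipschitz invertibility on $\R^2$ immediately, the containment $H_t(\Omega)=\Omega$ follows from the triangle-inequality estimate $|H_t(x)-x| < \dist(x,\partial\Omega)$ together with the observation that $\phi$ vanishes off $\Omega$, and the clean formula $H_s\circ H_t^{-1}(z) = z + \Phi_t(z)(v_s - v_t)$ makes both conditions of Definition~\ref{bilippath} a one-line verification. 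Your approach is shorter, avoids any dependence on Proposition~\ref{prop:tripath}, and as you note actually proves a more general statement (neither convexity nor PL structure of $\D$, nor piecewise linearity of $\g$, is used); the paper presumably retains the triangulation approach because it keeps all the paths in this section within a common PL/affine framework and reuses machinery already built, but there is no mathematical advantage to it for this particular lemma.
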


\begin{proof}
Without loss of generality, we may assume that $\g$ is a straight line segment; in the general case of PL curves $\g$, concatenate the bi-Lipschitz paths from the various segments of $\g$ and re-parameterize if necessary. Assume then, that $\g:[0,1] \to \Omega$ with $\g(t) = \g(0) + tv$ for some $v \in \C$ with $|v|<r$. 

By the hypotheses, $\D$ is a convex polygon with vertices $v_1,\ldots, v_n$. 
Fix $z_0 \in \D$ and for $i \in \{1,\ldots, n \}$ let $w_i$ be the point on the ray from $z_0$ through $v_i$ that is distance $(1-\delta /2)\dist (\D,\partial \Omega)$ away from $v_i$ (and outside $\D$). Let $Y$ be the convex hull of $w_1,\ldots, w_n$ and set $d = \dist (\D, \partial Y)>0$.

Triangulate the PL ring domain $\overline{Y\setminus \D}$ via triangles $T_1,\ldots, T_m$ which have, alternately, one or two vertices contained in $\partial Y$.

Given a direction $e^{i\theta}$, we will construct a bi-Lipschitz path which moves $\D$ onto $\D_1 = \{z : z = z' + de^{i\theta}/2, z' \in \D \}$. For $z \in \D$ we just define
\[ H_t(z) = (z+de^{i\theta}/2 ) t + (1-t)z.\]
If $T_i$ has two vertices on $\partial Y$ and third vertex $\xi_1\in \partial \D$, then we apply the bi-Lipschitz path from Proposition \ref{prop:tripath} (conjugated by a suitable similarity) which fixes the two vertices in $\partial Y$ and moves $\xi_1$ to $\xi_1 + de^{i\theta}/2 \in Y$.

If $T_i$ has one vertex on $\partial Y$ and two vertices $\xi_1,\xi_2$ in $\partial X$, then we apply the bi-Lipschitz path from Proposition \ref{prop:tripath} (again conjugated by a suitable similarity) which fixes the vertex in $\partial Y$ and moves $\xi_j$ to $\xi_j + de^{i\theta}/2$ for $j=1,2$.

This piecewise construction yields a bi-Lipschitz path which moves $\D$ to $\D_1$ and fixes every point of $\partial Y$ and hence can be extended to fix every point of $\Omega \setminus Y$.
By concatenating a finite number of bi-Lipschitz paths, we may move $X$ along any PL path in $\Omega$, as long as we avoid $\partial \Omega$, such that the path acts as a translation on $X$. \end{proof}

For the rest of the paper, given a bounded set $X \subset \R^2$, a number $r>0$ and a curve $\g :[0,a] \to \R^2$ with $\g(0) \in X$, we denote
\[ \mathcal{N}(X,\g, r) := \bigcup_{t\in[0,a]} \left(\g(t)-\g(0) + N(X,r)\right).\]

\subsection{Proof of Proposition \ref{prop:1}}

The first claim about the existence of such an $\e$ follows by following a $c$-cigar curve from $z$ to $w$. Henceforth, fix $B = B(z_0,\e)$.

Suppose first that $\diam{X}=0$, that is $X=\{x_0\}$ for some $x_0\in \Omega$. Let $\g$ be a PL $c$-cigar path that joins $x_0$ with $x_0$ in $\Omega$. Let $\Delta$ be a compact convex set with PL boundary contained in $B(x_0,r)$ with $r < \min \{ \epsilon,  \tfrac13\dist(x_0,\partial\Omega) \}$. We then apply Lemma \ref{lem:1} to find the required bi-Lipschitz path $H:[0,1] \to LIP(\Omega)$ such that for any $t\in [0,1]$, $H_t(x_0) = \g(t)$.

Suppose now and for the rest of the proof of Proposition \ref{prop:1} that $\diam{X} > 0$.
Set 
\begin{equation}\label{eq:delta} 
\d = \left(\frac{\min\{\eta,\e\}}{216cC}\right)^{\frac1{1-s}}.
\end{equation}
We may assume that $C>1$, hence $\d$ is less than $1$. Then let $V$ be a $\d$-net of $X$ and let $D_1,\dots,D_n$ be the components of $\mathcal{T}_{\d}(X)$. 

Since $\d < \eta/20$, we have that 
\[ \dist(\mathcal{T}_{\d}(X),\partial\Omega) \geq \dist(X,\partial\Omega) - \dist_H(\mathcal{T}_{\d}(X),X) \geq \eta - 8\d \geq \eta/2\]
where $\dist_H$ denotes the Hausdorff distance.

Let $i\in\{1,\dots,n\}$. For each $x\in \partial D_i$ there exists $z\in X$ such that $|x-z|\leq 8\d$ and there exists $v\in V$ such that $|z-v|\leq \d$. Therefore, for every $x\in \partial D_i$, $\dist(x,V) \leq 9\d$ and it follows that
\begin{equation}\label{eq:3}
\diam{D_i} \leq 18\d\card(V\cap D_i).
\end{equation}
Therefore,
\begin{equation}\label{eq:1}
\sum_{i=1}^n\diam{D_i} \leq  18\d\card(V) \leq 18C\d^{1-s} = (12c)^{-1}\min\{\eta,\e\}.
\end{equation}

The construction of the bi-Lipschitz path $H$ consists of two parts. In the first part we construct at most $n-1$ many bi-Lipschitz paths that "gather the sets $D_i$ together" and in the second part we construct a bi-Lipschitz path that leads the cluster of gathered sets $D_i$ into the disk $B$.

\subsubsection{Part 1}

The construction in this part is in an inductive manner.

\emph{Step 0.} Apply Lemma \ref{lem:convexsub} for the sets $D_1,\dots,D_n$ and obtain closed mutually disjoint convex sets $\D_1^{(1)},\dots,\D_{k_0}^{(0)}$ for some positive integer $k_0\in \{1,\dots,n\}$. Note that 
\begin{align*} 
\sum_{i=1}^{k_0} \diam{\D^{(0)}_i} \leq \sum_{i=1}^{n} \diam{D_i} \leq (12c)^{-1}\min\{\eta,\e\}.
\end{align*}
Moreover, the sets $\D_1^{(0)},\dots,\D_{k_0}^{(0)}$ are contained in $\Omega$ and for each $i\in\{1,\dots,k_0\}$
\begin{align*} 
\dist(\D_i^{(0)},\partial\Omega) \geq \dist(\mathcal{T}_{\d}(X),\partial\Omega) - \diam{\D_i^{(0)}} \geq \eta/2 - (12c)^{-1}\eta >\tfrac13 \eta.
\end{align*}
If $k_0 = 1$, then the procedure terminates and we proceed to Part 2; otherwise proceed to the next step.

\emph{Inductive step.} Suppose that for some positive integer $m \in \{0,\dots,n-2\}$ we have defined disjoint closed convex sets $\D^{(m-1)}_{1},\dots, \D^{(m-1)}_{k_{m-1}} \subset \Omega$ such that $2 \leq k_{m-1} \leq n-m+1$ and the following three properties hold.
\begin{enumerate}
\item[(P1)] For each $i\in \{1,\dots,k_{m-1}\}$ there exists $j\in\{1,\dots,n\}$ with $D_{j} \subset \D^{(m-1)}_{i}$;
\item[(P2)] We have \[ \sum_{i=1}^{k_{m-1}} \diam{\D^{(m-1)}_{i}} < (6c)^{-1}\min\{\eta,\e\};\]
\item[(P3)]  For each $i\in \{1,\dots, k_{m-1}\}$, $\dist(\D_i^{(m-1)},\partial\Omega) > \eta/3$.
\end{enumerate}

Let $\g_{m}:[0,1] \to \Omega$ be a PL $c$-cigar curve with $\g_m(0)\in X\cap \D_{1}^{(m-1)}$ and $\g_m(1) \in X\cap \D^{(m-1)}_2$. By (\ref{eq:cigar}) and inductive assumption (P3), we have that for all $t\in [0,1]$, 
\begin{equation}\label{eq:gamma}
\dist(\g_m(t),\partial\Omega) \geq (2c)^{-1}\min\{\dist(\g_m(0),\partial\Omega),\dist(\g_m(1),\partial\Omega)\} 
\geq (2c)^{-1}\eta.
\end{equation}
Using inductive assumption (P2), we can find a number $r_m>0$ such that
\begin{enumerate}
\item $r_m < \tfrac13\dist(\D^{(m-1)}_{1},\D^{(m-1)}_i)$ for all $i\in\{2,\dots,k_{m-1}\}$
\item $r_m < (6c)^{-1}\min\{\eta,\e\} -  \sum_{i=1}^{k_{m-1}} \diam{\D^{(m-1)}_{i}}$.
\end{enumerate}

The second property of $r_m$ implies that 
\[ r_m <(2c)^{-1}\dist(\D^{(m-1)}_{1},\partial\Omega) - \diam{\D^{(m-1)}_{1}}\]
which, along with (\ref{eq:gamma}), implies that $\mathcal{N}(\D_{1}^{(m-1)},\g_m,2r_m) \subset \Omega$. 
Let
\[T_m = \sup\left \{ t \in [0,1] : \mathcal{N}(\D_{1}^{(m-1)},\g_m|_{[0,t]},r_m) \cap \bigcup_{j =2}^{k_m-1} \D_j^{(m-1)} = \emptyset \right \}. \]
Since $\dist(\D_1^{(m-1)},\D_i^{(m-1)}) \geq 3r_m$ for all $i\neq 1$, we have that $T_m>0$. Let $i_0 \in \{2,\dots,k_{m-1}\}$ be such that 
\[ \mathcal{N}(\D_{1}^{(m-1)},\g_m|_{[0,T_m]},r_m) \cap \D_{i_0}^{(m-1)} \neq \emptyset.\]
For simplicity, we may assume that $i_0=2$. Denote by $H^{(m)}$ the bi-Lipschitz path given from Lemma \ref{lem:1} for the curve $\g = \g_m|_{[0,T_m]}$. Consider now the disjoint closed sets  
\[ E_1 = H_1^{(m)}(\D^{(m-1)}_{1})\cup \D^{(m-1)}_{2}, \quad E_{2} = \D^{(m-1)}_{3}, \quad \dots, \quad E_{k_m-1} = \D^{(m-1)}_{k_{m-1}}\]
and apply Lemma \ref{lem:convexsub} to the sets $E_j$ to obtain mutually disjoint closed convex sets $\D^{(m)}_1,\dots,\D^{(m)}_{k_{m}}$ with $k_{m} \leq k_{m-1}-1$. We note that 
\begin{enumerate}
\item for each $i\in \{1,\dots,k_{m}\}$ there exists $j\in\{1,\dots,n\}$ with $D_{j} \subset \D^{(m)}_{i}$;
\item \[ \sum_{i=1}^{k_{m}} \diam{\D^{(m)}_{i}} \leq  \sum_{i=1}^{k_{m-1}} \diam{\D^{(m-1)}_{i}} + r_m < (6c)^{-1}\min\{\eta,\e\}.\]
\end{enumerate}
It follows that $\D_1^{(m)},\dots,\D_{k_{m}}^{(m)}$ are contained in $\Omega$ and in fact, for each $i\in\{1,\dots,k_{m}\}$
\begin{align*} 
\dist(\D_i^{(2)},\partial\Omega) \geq \dist(\mathcal{T}_{\d}(X),\partial\Omega) - \diam{\D_i^{(m)}} &\geq \eta/2 - (6c)^{-1} > \eta/3.
\end{align*}
Therefore, we have verified that inductive assumptions (P1)--(P3) hold for $m$. If $k_{m} = 1$ the procedure terminates and we proceed to Part 2; otherwise proceed to the next step.

\medskip

After $p$ steps, for some $p \in \{ 0,\dots,n-1\}$, we have $k_{p}=1$. By the choice of $\d$ and numbers $r_1,\dots,r_p$, the final convex set $\D^{(p)}_1$ satisfies properties (P1)--(P3); precisely, we have
\begin{enumerate}
\item $\diam{\D^{(p)}_1} < (6c)^{-1}\min\{\eta,\e\}$;
\item there exists $i\in\{1,\dots,n\}$ such that $D_i \subset \D^{(p)}_1$,
\item $\D^{(p)}_1 \subset\Omega$ and $\dist(\D^{(p)}_1,\partial\Omega) > \eta/3$.
\end{enumerate}


\subsubsection{Part 2} 
Let $z_0\in \Omega$ be the center of $B$ and let $\g_{p+1} : [0,1] \to \Omega$ be a PL $c$-cigar curve in $\Omega$ with $\g_{p+1}(0) \in X \cap \D^{(p)}_1$ and $\g_{p+1}(1) =z_0$. If $z_0 \in X \cap \D^{(p)}_1$, then we can choose $\g_{p+1}$ to be constant. By (\ref{eq:cigar}), we have that for all $t\in [0,1]$, 
\begin{align}\label{eq:gamma2} 
\dist(\g_{p+1}(t),\partial\Omega) &\geq (2c)^{-1}\min\{\dist(\g_{p+1}(0),\partial\Omega),\dist(\g_{p+1}(1),\partial\Omega)\}\\
&\geq (2c)^{-1}\min\{\e,\eta\}. \notag
\end{align}

Let $r_{m+1}$ be a positive number with $r_{m+1} < (6c)^{-1}\min\{\eta,\e\}$. Then (\ref{eq:gamma2}) implies that 
\[ \mathcal{N}(\D_{1}^{(p)},\g_{p+1},r_{p+1}) \subset \Omega.\] 
Let now $H^{(p+1)}$ be the bi-Lipschitz path given from Lemma \ref{lem:1} for $\g = \g_{p+1}$. If $p=0$, then we define $H: [0,1] \to LIP(\R^N)$ with $H = H^{(p+1)}$. If $p\geq 1$, we concatenate the bi-Lipschitz paths $H^{(1)},\dots,H^{(p+1)}$ and we obtain the desired bi-Lipschitz path $H$.

%

\section{A multitwist bi-Lipschitz map}\label{sec:multitwist}

In \textsection\ref{sec:proofprop} we prove Proposition \ref{prop:main} while in \textsection\ref{sec:f} we show that the multitwist map in Theorem \ref{thm:main} is bi-Lipschitz.

\subsection{Proof of Proposition \ref{prop:main}}\label{sec:proofprop}

In this subsection we prove Proposition \ref{prop:main}. To that end, we require the following ``egg-yolk principle" lemma which is a simple application of Koebe's Distortion Theorem. 

\begin{lemma}
\label{lem:koebe}
Given $\d>0$, there exists $L_0>1$ with the following property. If $U$ is a domain in $\R^2$, $K \subset U$ is a compact connected set with $\dist(K,\partial U) \geq \d \diam{K}$, $x_0\in K$ is a point, and $f:U \to \R^2$ is an injective conformal map, then for all $x,y \in K$,
\[ L_0^{-1}|f'(x_0)| |x-y|  \leq |f(x)-f(y)| \leq L_0|f'(x_0)| |x-y|.\]
\end{lemma}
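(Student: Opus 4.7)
The plan is to apply Koebe's distortion theorem locally at every point of $K$ and then chain the resulting estimates using the connectedness and compactness of $K$. Set $D = \diam K$. By hypothesis $B(x, \delta D) \subset U$ for every $x \in K$, so the normalized map $g_x(z) = (f(x + \delta D z) - f(x))/(\delta D \cdot f'(x))$ is univalent on $B(0,1)$ with $g_x(0) = 0$ and $g_x'(0) = 1$. Koebe's distortion theorem supplies universal constants $0 < c_1 < c_2$ and $0 < c_3 < c_4$ such that $c_1|z| \leq |g_x(z)| \leq c_2|z|$ and $c_3 \leq |g_x'(z)| \leq c_4$ for $|z| \leq 1/2$, while Koebe's $1/4$ theorem gives $g_x(B(0,1)) \supset B(0, 1/4)$. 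Translating back: for every $y$ with $|y-x| \leq \delta D /2$, both $|f(y) - f(x)| \asymp |f'(x)| |y-x|$ and $|f'(y)| \asymp |f'(x)|$ with constants depending only on the Koebe constants.

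Next, I would promote the comparability of $|f'|$ from nearby points to all of $K$ by chaining. Since $K$ is contained in a disk of radius $D$, it can be covered by $N = N(\delta) = O(\delta^{-2})$ balls of radius $\delta D / 4$ centered at points $y_1, \ldots, y_N \in K$. The graph on these centers whose edges record intersecting covering balls is connected because $K$ is connected, so any two centers are joined by a chain of length at most $N$ with consecutive centers at distance at most $\delta D / 2$. Applying the local derivative estimate along such a chain yields $|f'(y_i)| \asymp |f'(y_j)|$ with ratio at most $(c_4/c_3)^N$; since every $x \in K$ lies within $\delta D /4$ of some $y_i$, this gives $|f'(x)| \asymp |f'(x_0)|$ for every $x \in K$, with constant depending only on $\delta$.

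The required two-sided bound for $|f(x)-f(y)|$ then follows by splitting into two regimes. If $|x-y| \leq \delta D/2$, the local Koebe estimate at $x$ combined with $|f'(x)| \asymp |f'(x_0)|$ produces both inequalities at once. If instead $|x-y| > \delta D/2$, so that $D < 2|x-y|/\delta$, the upper bound follows by integrating $|f'|$ along the chain of balls joining $x$ to $y$: the total length is at most $N \delta D / 2 \leq N |x-y|$, while $|f'|$ is uniformly comparable to $|f'(x_0)|$ along the chain. For the lower bound in this regime, if $\delta D/2 < |x-y| \leq \delta D$ the Koebe distortion bound applied directly on $B(x, \delta D)$ gives $|f(y) - f(x)| \geq |f'(x)| \, |x-y|/(1 + |x-y|/(\delta D))^2 \geq |f'(x)| \, |x-y|/4$; if $|x-y| > \delta D$, then $y \notin B(x, \delta D)$, and by injectivity of $f$ combined with Koebe's $1/4$ theorem one has $|f(y) - f(x)| \geq \delta D |f'(x)|/4 \geq (\delta/4) |f'(x)| \, |x-y|$, using $|x-y| \leq D$.

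Nothing difficult is involved; all estimates are standard consequences of the Koebe theorems, and the only mild obstacle is the bookkeeping in the chaining step to guarantee that the final constant $L_0$ depends solely on $\delta$ (and not on $K$, $U$, $f$, or $x_0$). This is ultimately ensured by the planar volume bound $N(\delta) = O(\delta^{-2})$ on the cover, which caps the length of all chains used in the argument.
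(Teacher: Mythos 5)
Your proof is correct. The upper (Lipschitz) bound is obtained exactly as in the paper: both arguments cover $K$ by a bounded number $N(\delta)$ of balls of radius comparable to $\delta\diam K$ (the paper uses a maximal separated net, you use a cover, but this is immaterial), exploit connectedness of $K$ to chain local Koebe distortion estimates, and thereby conclude that $|f'|$ varies by at most a factor $A^{N(\delta)}$ over $K$ and that $f|K$ is $O(|f'(x_0)|)$-Lipschitz. Where you genuinely diverge is the lower bound. The paper gets it for free by a clever inversion: it uses the Koebe estimate on $\dist(f(z),\partial f(U))$ together with the just-proved Lipschitz bound to show that $f(K)\subset f(U)$ satisfies the same hypotheses (compact, connected, boundary distance bounded below by a fixed multiple of its diameter), and then applies the identical chaining argument to $f^{-1}$. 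You instead prove the lower bound directly with a case split: the growth theorem $|g(z)|\geq|z|/(1+|z|)^2$ handles $|x-y|\leq\delta\diam K$, and injectivity plus Koebe's $1/4$-theorem handles $|x-y|>\delta\diam K$ (since $f(y)$ must lie outside the disk $B(f(x),\delta\diam K\,|f'(x)|/4)\subset f(B(x,\delta\diam K))$). Your route is more self-contained and avoids re-checking hypotheses for $f^{-1}$, at the cost of an extra case analysis; the paper's is shorter once the Lipschitz bound is in hand. Both are sound, and both yield a constant depending only on $\delta$.
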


\begin{proof}
If $K$ is a single point, the claim is trivial. Assume for the rest that $\diam{K} = d>0$. Let $V$ be a maximal $(\d d/4)$-separated subset of $K$ containing $x_0$. By the doubling property of $\R^2$, there exists $N\in\N$ depending only on $\d$ such that $\card{V} \leq N$. 

By the Koebe Distortion Theorem (see for example \cite[Theorem I.4.5]{GM} and \cite[Theorem 1.3]{Pomm}), there exists a universal $A>1$ such that for any $z\in K$ and for any $w,w_1,w_2\in B(z,\frac12\d d)$ we have
\begin{equation}\label{Koebe1}
A^{-1}|f'(w)| |w_1-w_2| \leq |f(w_1)-f(w_2)| \leq A|f'(w)| |w_1-z_2|
\end{equation}
\begin{equation}\label{Koebe2}
A^{-1}|f'(z)| \leq |f'(w)| \leq A |f'(z)|
\end{equation}
\begin{equation}\label{Koebe3}
\dist(f(z),\partial f(U)) \geq A^{-1} \d d |f'(z)|.
\end{equation}
By (\ref{Koebe2}), we have that for all $x\in K$, 
\begin{equation}\label{Koebe4}
A^{-N}|f'(x_0)| \leq |f'(x)| \leq A^N |f'(x_0)|.
\end{equation}

We show that $f|K$ is $(L_1|f'(x_0)|)$-Lipschitz for some $L_1>0$ depending only on $\d$. Fix $x,y \in K$ and consider two cases. If $|x-y|<\d d/2$, then by (\ref{Koebe1}) and (\ref{Koebe4})
\[ A^{-N}|f'(x_0)| |x-y|  \leq |f(x)-f(y)| \leq A^{N}|f'(x_0)| |x-y|.\]
Suppose now that $|x-y|\geq \d d/2$. Then, there exist $z,z'\in V$ such that $x\in B(z,\d d/4)$ and $y\in B(z',\d d/4)$, and by connectedness of $K$, there exist distinct $z_1,\dots,z_l \in V$ such that $z_1=z$, $z_l =z'$, and for all $j\in\{1,\dots,l-1\}$, $|z_j-z_{j+1}| < \d d/2$. Therefore,
\begin{align*}
|f(x)-f(y)| &\leq |f(x)-f(z)| + \sum_{i=1}^{l-1}|f(z_{i+1})-f(z_i)| + |f(x')-f(z')|\\ 
&\leq |f'(x_0)|(N+1)A^N (\d d/2)\\ 
&\leq |f'(x_0)|(N+1)A^N |x-y|.
\end{align*}

By (\ref{Koebe3}) we have that $\dist(w,\partial f(U)) \geq A^{-N}\d d|f'(x_0)|$ for all $w\in f(K)$. On the other hand, since $f|K$ is $L_1$-Lipschitz, we have that $\diam{f(K)} \leq L_1|f'(x_0)| d$. Therefore, 
\[ \dist(f(K),\partial f(U)) \geq (L_1A^N)^{-1}\d\diam{f(K)}.\] 
Then, working as above, we can find $L_2>0$ depending only on $L_1$ and $N$ (hence only on $\d$) such that $f^{-1}|f(K)$ is $(L_2|(f^{-1})'(f(x_0))|)$-Lipschitz. Therefore, for all $x,y \in K$
\[ |x - y| \leq \frac{L_2}{|f'(x_0)|} |f(x)-f(y)| \]
and the proof is complete.
\end{proof}

We can now prove Proposition \ref{prop:main}.

\begin{proof}[{Proof of Proposition \ref{prop:main}}]
Let $X \subset \R^2$ be a $c$-uniformly disconnected set. By Theorem \ref{thm:UD-hyper} we know that there is a geodesic pants decomposition of the hyperbolic Riemann surface $S:= \S^2 \setminus X$ so that the cuffs $(\alpha_j)$ have uniformly bounded hyperbolic length. By Proposition \ref{prop:collar}, there exist mutually disjoint ring domains $(R_j')$ which are thickenings of $(\alpha_j)$ with a uniform upper bound $M_0$ on their moduli. 

For each $j$, denote by $V_j'$ and $U_j'$ the bounded and unbounded, respectively, components of $\R^2 \setminus R_j'$. Let $\zeta_j$ be a similarity of $\R^2$ such that $\diam{\zeta_j^{-1}(V_j')}=1$ and $0 \in \zeta_j^{-1}(V_j')$. By (\ref{eq:Loewner}), there exists $\e_0$ depending only on $M_0$ (hence only on $c$) such that $\dist(\partial\zeta_j^{-1}(U_j'),\partial\zeta_j^{-1}(V_j')) \geq \e_0$. By Lemma \ref{lem:MM}, there exists a polygonal Jordan curve $\g_j$ with edges in $\mathscr{G}_{\e_0/16}^1$ which encloses $\zeta_j^{-1}(V_j')$ and satisfies
\[ \e_0/16 \leq \dist(x,\zeta_j^{-1}(V_j')) \leq \e_0/2, \qquad\text{for all $x\in \g_j$}.\]
Applying Lemma \ref{lem:MM}, there exists a polygonal Jordan curve $\G_j$ with edges in $\mathscr{G}_{\e_0/32}^1$ which encloses $\g_j$ and satisfies
\[ \e_0/32 \leq \dist(x,\g_j) \leq \e_0/4, \qquad\text{for all $x\in \G_j$}.\]
The ring domain $R_j''$ bounded by $\g_j$ and $\G_j$ satisfies 
\begin{enumerate}
\item $\dist(\g_j,\G_j) \geq \e_0/32$,
\item $1\leq \diam{R_j''} \leq 1+ \frac32\e_0$ and 
\item $\dist(x, \partial\zeta_j(R_j')) \geq \e_0/16$, for all $x\in R_j''$.
\end{enumerate}
It follows that $R_j'' \subset [-1- \frac32\e_0,1+ \frac32\e_0]^2$ and since the boundary curves of $R_j''$ are made of edges in $\mathscr{G}_{\e_0/32}^1$, there are at most $k$ many different domains $R_j'' $, with $k$ depending only on $\e_0$, hence only on $c$. 

There exists $\d_0\in (0,1)$ depending only on $\e_0$ (hence only on $c$) and for each $j$ there exists $\d_j \in (0,1-\d_0)$, and there exists a conformal map 
\[ \psi_j : B(0,1) \setminus \overline{B}(0,\d_j) \to R_j'' .\] 
Setting
\[ K := \{ 1 - \tfrac34 \d_0 \leq |x| \leq 1 -\tfrac14 \d_0\} \subset  U:= B(0,1) \setminus \overline{B}(0,\d_j), \]
we have $\dist ( K, \partial U) \geq \d_0/4$ and $\diam{K} = 2-\d_0/2$. Hence by Lemma \ref{lem:koebe},
we have that $|\psi_j'(1 - \tfrac34 \d_0)|^{-1}\psi_j$ restricted on $K$ is a $L_0$-bi-Lipschitz, where $L_0$ depends only on $\d_0$ (hence only on $c$). Moreover,
\[ \frac1{L_0(2-\d_0/2)} \leq  \frac{\diam{\psi_j(K)}}{L_0\diam{K}} \leq  |\psi_j'(1 - \tfrac34 \d_0)| \leq L_0\frac{\diam{\psi_j(K)}}{\diam{K}} \leq L_0\frac{\sqrt{2}(2+3\e_0)}{2-\d_0/2}.\]

For each $j \in\N$, let $\lambda_{j} = \diam{\psi_j(\partial B(0, 1 - \frac34\d_0))} \in [1,1+3\e_0/2]$. It follows that the map 
\[ (\lambda_j)^{-1}\psi_j|K\] 
is $L_1$-bi-Lipschitz for some $L_1$ depending on $L_0,\e_0,\d_0$, hence only on $c$.

To complete the proof set 
\[ L = \max\left\{\frac{4-\d_0}{2\d_0}, \frac{4L_1}{4-\d_0} \right\},\]
define conformal maps
\[g_j: \overline{B(0,1)} \setminus B(0,1-1/L) \to \R^2 \qquad \text{with}\qquad g_j(x) = (\lambda_j)^{-1}\psi_j|K((1-\d_0/4)x),\]
and define similarities
\[ \phi_j : \R^2 \to \R^2 \qquad\text{with}\qquad \phi_j(x) = (\lambda_j)^{-1}\zeta_j^{-1}(x).\]
Since $L \geq \frac{4-\d_0}{2\d_0}$, we have that $(1-\d_0/4)x \in K$ for all $x \in \overline{B(0,1)} \setminus B(0,1-1/L)$. Moreover, since $L\geq  \frac{4L_1}{4-\d_0}$, we have that $g_j$ is $L$-bi-Lipschitz. Since there are at most $k$ many domains $R_j''$, there are at most $k$ many conformal maps $g_j$.
\end{proof}

Setting
\[ f_j = \phi_j \circ g_{i(j)} \qquad\text{and}\qquad R_j = f_j(\overline{B(0,1)} \setminus B(0,1-1/L))\]
where $\phi_j$ and $g_{i(j)}$ are as in the statement of Proposition \ref{prop:main}, and applying Lemma \ref{lem:koebe} to the ring 
\[ K' = \overline{B(0,1- \epsilon_0/8)} \setminus B(0,1-7\epsilon_0/8),\] 
we see that there exists $\xi >0$ so that
\begin{equation}
\label{eq:ringdist}
\dist ( \partial R_j' , R_j) \geq \dist ( \partial K_j , R_j) \geq \xi \diam{R_j}
\end{equation}
for all $j$.

\subsection{A multitwist bi-Lipschitz map}\label{sec:f}

For the rest of this section we fix a $c$-uniformly disconnected Cantor set $X \subset \R^2$. By Proposition \ref{prop:main}, we obtain $k\in\N$, $L>1$, a finite set $\{g_1,\dots,g_k\}$ of $L$-bi-Lipschitz conformal maps defined on $\overline{B}(0,1)\setminus B(0,1-\tfrac1{L})$, similarities $(\phi_j)_{j\in\N}$ and ring domains $R_j$ such that for each $j\in\N$ there exists $i(j)\in\{1,\dots,k\}$
\begin{equation} 
\label{eq:gj}
R_j = f_j( \overline{B}(0,1)\setminus B(0,1-\tfrac1{L}))\qquad \text{with}\qquad  f_j = \phi_j \circ g_{i(j)}.
\end{equation}
Let $f:\R^2 \to \R^2$ be a map such that $f$ is the identity outside of the union of $\overline{R_j}$, while for each $j\in\N$, $f|R_j = f_j \circ \mathfrak{D}\circ f_j^{-1}$ with
\[ \mathfrak{D}(r,\theta) = (r, \theta + 2\pi L(1-r)).\]

\begin{lemma}\label{lem:BLmap}
The map $f$ is $L_0$-bi-Lipschitz with $L_0$ depending only on $c$.
\end{lemma}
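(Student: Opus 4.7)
The plan is to bound the bi-Lipschitz constant of the model Dehn twist $\mathfrak{D}$, use the uniform bi-Lipschitz control of the $g_i$ from Proposition \ref{prop:main} to deduce a uniform bi-Lipschitz constant for $f$ on each ring $\overline{R_j}$, and finally glue these local estimates by exploiting the fact that $f$ is the identity on every ring boundary.

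First I would show that $\mathfrak{D}$ is $(1+2\pi L)$-bi-Lipschitz. Writing $\mathfrak{D}(z) = z e^{2\pi i L(1-|z|)}$ on $A := \overline{B}(0,1)\setminus B(0,1-1/L)$, an add-and-subtract computation with $w e^{2\pi i L(1-|z|)}$, together with the elementary inequalities $|e^{ia}-e^{ib}|\leq |a-b|$, $\bigl||z|-|w|\bigr|\leq |z-w|$, and $|w|\leq 1$, yields
\[ |\mathfrak{D}(z)-\mathfrak{D}(w)| \leq |z-w| + |w|\bigl|e^{2\pi i L(1-|z|)} - e^{2\pi i L(1-|w|)}\bigr| \leq (1+2\pi L)|z-w|. \]
The inverse $\mathfrak{D}^{-1}(z) = z e^{-2\pi i L(1-|z|)}$ has the identical form, so the same estimate gives the matching lower bi-Lipschitz bound. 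The crucial bookkeeping point is that $\mathfrak{D}$ is the identity on both boundary circles $\{|z|=1\}$ and $\{|z|=1-1/L\}$.

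Since $\phi_j$ is a similarity (and similarities do not affect bi-Lipschitz constants under conjugation) and $g_{i(j)}$ is $L$-bi-Lipschitz, the map $f|\overline{R_j} = \phi_j\circ g_{i(j)} \circ \mathfrak{D} \circ g_{i(j)}^{-1} \circ \phi_j^{-1}$ is $M_1$-bi-Lipschitz with $M_1 := L^2(1+2\pi L)$, uniformly in $j$. Because $\mathfrak{D}$ fixes the boundary circles, $f$ is the identity on every $\partial R_j$, and combined with $f|(\R^2 \setminus \bigcup_j R_j) = \mathrm{id}$ this makes $f$ continuous on $\R^2$ and fixes $\R^2 \setminus \bigcup_j R_j^\circ$ pointwise.

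The main step, and the one requiring care, is to pass from these local bi-Lipschitz estimates to a global one, since a straight segment $[x,y]$ can cross infinitely many rings $R_j$. The plan is to circumvent this by exploiting the boundary fixed-point property. If $x,y$ both lie in a common $\overline{R_j}$, the bi-Lipschitz bound on $\overline{R_j}$ is immediate. Otherwise, parametrize $\gamma(t) = x+t(y-x)$ and define $z_1 := \gamma(\inf\{t : \gamma(t)\notin R_{j_1}^\circ\})\in \partial R_{j_1}$ when $x\in R_{j_1}^\circ$, and $z_1 := x$ otherwise; symmetrically let $z_2 := \gamma(\sup\{t : \gamma(t)\notin R_{j_2}^\circ\})\in \partial R_{j_2}$ when $y\in R_{j_2}^\circ$, and $z_2 := y$ otherwise. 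Since the $R_j$ are mutually disjoint, one checks $z_1, z_2$ lie on $[x,y]$ in order (if $t_{z_1}> t_{z_2}$, then $z_1\in \partial R_{j_1}\cap R_{j_2}^\circ$, contradicting disjointness). Using $f(z_1) = z_1$, $f(z_2) = z_2$, and the bi-Lipschitz bound on $\overline{R_{j_1}}$ and $\overline{R_{j_2}}$,
\[ |f(x)-f(y)| \leq |f(x)-f(z_1)| + |z_1-z_2| + |f(z_2)-f(y)| \leq M_1|x-z_1| + |z_1-z_2| + M_1|z_2-y| \leq M_1|x-y|. \]
The key observation is that the middle term $|z_1-z_2|$ requires \emph{no} knowledge of $f$ along $[z_1,z_2]$, so the possibility of infinitely many further ring crossings between $z_1$ and $z_2$ is harmless. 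The same argument applied to $f^{-1}$ (which has identical structure with $\mathfrak{D}^{-1}$ in place of $\mathfrak{D}$) yields $|x-y|\leq M_1|f(x)-f(y)|$, and so $L_0 := M_1 = L^2(1+2\pi L)$ works. Since $L$ depends only on $c$ by Proposition \ref{prop:main}, so does $L_0$.
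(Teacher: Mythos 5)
Your proof is correct, but it takes a genuinely different route from the paper's. The paper also first shows that the model twist $\mathfrak{D}$ is $L_1$-bi-Lipschitz and that $f|\overline{R_j}$ is uniformly bi-Lipschitz, but then packages the local information as a \emph{bounded length distortion} (BLD) estimate — for every rectifiable curve $\gamma$, $L_2^{-1}\ell(\gamma)\leq\ell(f(\gamma))\leq L_2\ell(\gamma)$ — and invokes the known fact that a BLD homeomorphism of a quasiconvex space such as $\R^2$ is bi-Lipschitz. You instead give an explicit, self-contained gluing argument: pick the last point $z_1$ of $[x,y]$ in $\overline{R_{j_1}}$ (the ring containing $x$) and the first point $z_2$ in $\overline{R_{j_2}}$ (the ring containing $y$), use $f(z_1)=z_1$ and $f(z_2)=z_2$ so that the middle term $|z_1-z_2|$ is untouched, and absorb the two outer terms with the ring-wise constant. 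This sidesteps the BLD literature entirely and makes the dependence on disjointness of the rings and on $f$ fixing every $\partial R_j$ completely explicit; the paper's route is shorter on the page but relies on an external theorem. Your constant $L_0 = L^2(1+2\pi L)$ is also explicit, which the BLD route does not immediately give. Both arguments tacitly use that $f$ is a homeomorphism with $f(\overline{R_j})=\overline{R_j}$; worth noting, but not a gap, since $\mathfrak{D}$ is a self-homeomorphism of the annulus fixing both boundary circles.
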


\begin{proof}
It is fairly elementary to see that $\mathfrak{D}$ is $L_1$-bi-Lipschitz for some $L_1>1$ depending only on $L$ (hence only on $c$).
It follows that for each $j\in\N$, $f|R_j$ is $L^2L_1$-bi-Lipschitz. Since $f$ is the identity outside of the union of $\overline{R_j}$ (and hence bi-Lipschitz), we get that $f$ is an $L_2$-bounded length distortion map for some $L_2>1$ depending only on $L$. That is, 
\[ L_2^{-1}\ell(\g) \leq \ell(f(\g)) \leq L_2\ell(\g)\]
for any rectifiable curve $\g$, with $\ell$ denoting length. The proof is completed by recalling that every bounded length distortion homeomorphism of $\R^2$ (or any quasiconvex space) is bi-Lipschitz quantitatively.
\end{proof}

\section{Decomposion and proof of Theorem \ref{thm:main}}\label{sec:proof}

In this section we will prove the following result, which immediately implies Theorem \ref{thm:main1}.

\begin{theorem}\label{thm:main}
Suppose the Assouad dimension of $X$ is less than $1$ and $f$ is the bi-Lipschitz map from \textsection\ref{sec:f}. Then there exists a bi-Lipschitz path $H: [0,1] \to LIP(\R^2)$ such that $H_0 = f$ and $H_1$ is the identity.
\end{theorem}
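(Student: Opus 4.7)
The plan is to build a bi-Lipschitz path $H:[0,1]\to LIP(\R^2)$ from $H_0=f$ to $H_1=\mathrm{Id}$ by unwinding all the Dehn twists in a controlled way, then partition $[0,1]$ finely enough that each piece has isometric distortion at most $1+\epsilon$. Following the strategy outlined in the introduction, I would first observe that the collection of nested regions $V_j'$ (bounded components of $\R^2\setminus R_j'$) inherits from the pants decomposition a tree structure, and by Proposition \ref{prop:main} and the proof of Proposition \ref{prop:UDsuff} this tree has bounded branching. Using the hypothesis $\dim_A X < 1$ together with Proposition \ref{prop:1}, I would fix a level depth $N$ large enough that, for every $j$, the Cantor set piece $X\cap V_j$ may be covered by the components of a thickening $\mathcal{T}_\d(X\cap V_j)$ that can all be collapsed isometrically into a small round ball $B_j\subset V_j$ via a bi-Lipschitz path $C^{(j)}$ supplied by Proposition \ref{prop:1}. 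The crucial gain is that in $B_j$ the images of the next $N$ generations of rings sit inside a round disk, where they can be rotated rigidly.

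With this in hand I would construct the unwinding path on each $V_j$ as a concatenation $C^{(j)}\cdot R^{(j)}\cdot (C^{(j)})^{-1}$, where $R^{(j)}$ is a path of rotations in $B_j$ chosen so that after conjugation by $C^{(j)}$ its net effect at the boundary $\partial V_j$ is rotation by the total twist accumulated from generation $0$ to generation $N$ through the cuff $\alpha_j$. On the ring $R_j$ itself I would use the Dehn twist path $D_t$ from Lemma \ref{lem:dehntwistpath}, conjugated by $f_j=\phi_j\circ g_{i(j)}$ via Proposition \ref{prop:path}, interpolating between the rotation inherited from the neighboring outer $V$ and the identity on the inner boundary; Proposition \ref{prop:interp} guarantees such an interpolation exists in the (non-round) ring with prescribed boundary values. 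Doing this simultaneously on the rings belonging to a fixed residue class mod $N$ yields, by Proposition \ref{prop:main}'s finiteness conclusion together with the uniform-family calculus (Lemmas \ref{lem:pathcomp} and \ref{lem:pathdil}), a single bi-Lipschitz path on $\R^2\setminus X$; Proposition \ref{prop:remov} then extends it across $X$. Finally, concatenating the $N$ resulting paths (one per residue class) produces $H$.

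The main obstacle, and where the bulk of the work lies, is verifying that these infinitely many simultaneous unwindings really assemble into a bi-Lipschitz path on all of $\R^2$, rather than just a pointwise family of bi-Lipschitz maps. The enemy is the phenomenon of Example \ref{ex:1}: conjugation and composition of bi-Lipschitz paths need not remain bi-Lipschitz paths when the scales involved are unbounded. To handle this I would lean on the uniformity built into Proposition \ref{prop:main}: every ring $R_j$ is a similarity-copy of one of only finitely many model rings, so the conjugated Dehn-twist paths on $\{R_j\}_j$ form a uniform family in the sense of Definition \ref{def:unifpath}, and likewise the collapsing paths $C^{(j)}$ can be chosen uniformly thanks to the quantitative dependence in Proposition \ref{prop:1}. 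Once I check that the uniform-family $\e$-$\d$ condition holds level-by-level, Lemma \ref{lem:partition} lets me glue the contributions from the different $V_j$ and their exteriors into a single bi-Lipschitz path on $\R^2\setminus X$, and the rest follows as above. The final decomposition $f=f_N\circ\cdots\circ f_1$ of Theorem \ref{thm:main1} is then obtained by subdividing $[0,1]$ at points $0=t_0<t_1<\cdots<t_N=1$ with $|t_i-t_{i-1}|<\d(\epsilon)$ and setting $f_i=H_{t_{i-1}}\circ H_{t_i}^{-1}$.
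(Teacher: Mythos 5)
Your proposal matches the paper's proof essentially step for step: relabel the rings into a binary tree, use Proposition \ref{prop:1} to collapse $\mathcal{T}_\delta(X_{l,w})$ into a ball, build the local unwinding on $V_{l,w}$ as collapse--rotate--uncollapse matched via Proposition \ref{prop:interp} to the conjugated Dehn-twist path of Lemma \ref{lem:dehntwistpath}/Proposition \ref{prop:path} on $R_{l,w}$, invoke the finiteness from Proposition \ref{prop:main} together with Lemmas \ref{lem:pathcomp} and \ref{lem:pathdil} to get a uniform family, compose along residue classes mod a fixed depth $p$, and pass to the limit on $\R^2\setminus X$ before applying Proposition \ref{prop:remov}. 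One small inaccuracy in your description: the rotation path $R^{(j)}$ in $B_j$ should match only the single $2\pi$ twist being unwound on $R_j$ at its outer boundary (the accumulation over levels is achieved automatically by the iterated composition $F^k_j$), not ``the total twist accumulated from generation $0$ to $N$''; but this does not affect the overall architecture, which is the same as the paper's.
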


The proof comprises of 4 steps. In the first step we relabel the ring domains $R_j$ obtained from Proposition \ref{prop:main}. In the second step we use Proposition \ref{prop:1} to unwind the Dehn twists in each $R_j$ without changing small neighborhoods of $X$. In the third step we compose the bi-Lipschitz paths from the second step to perform unwindings arbitrarily close to $X$. Finally, in the fourth step, we use the uniformity of our maps to take a limit in the sequence of bi-Lipschitz paths obtained from the third step and recover the desired bi-Lipschitz path.

For the rest, we denote by $(R_j)_{j\in\N}$, $(\phi_j)_{j\in\N}$, $\{g_1,\dots,g_k\}$, and 
\[ (f_j)_{j\in\N} = (\phi_j\circ g_{i(j)})_{j\in\N}\] 
the ring domains, similarities, and conformal maps, respectively, from Proposition \ref{prop:main}.


\subsection{Step 1: Relabelling the ring domains $R_j$.} This step is similar to the proof of Proposition \ref{prop:UDsuff}.

For each $j\in \N$ let $V_j$ and $U_j$ be the bounded and unbounded, respectively, components of $\R^2 \setminus \overline{R_j}$. 

 Let $\varepsilon$ be the empty word. There exist three distinct $l_1,l_2,l_3 \in \N$ such that 
\begin{enumerate}
\item for all $j\in \N$, there exists $i\in\{1,2,3\}$ such that $R_j \subset V_{l_i}$ and
\item for all $j\in\N$ and all $i\in\{1,2,3\}$, $R_{l_i} \cap V_j = \emptyset$.
\end{enumerate}
For each $l\in\{1,2,3\}$, we denote $R_{l,\varepsilon}= R_{i_l}$ where $\varepsilon$ denotes the empty word. 

Inductively, suppose that for some $l\in\{1,2,3\}$ and for some finite word $w\in\{1,2\}^*$ we have labelled $R_{l,w} = R_{j_0}$ where $j_0 \in \N$. Then there exist exactly two distinct $j_1,j_2 \in \N$ such that
\begin{enumerate}
\item $R_{j_1},R_{j_2} \subset V_{j_0}$ and
\item for all $j\in\N\setminus \{j_1,j_2\}$ with $R_j \subset V_{j_0}$, either $R_j \subset V_{j_1}$, or $R_j \subset V_{j_2}$. 
\end{enumerate}
We denote $R_{l,w1}=R_{j_1}$ and $R_{l,w2} = R_{j_2}$.

Thus, we have that $\{R_j:j\in\N\} = \{R_{l,w} : l\in\{1,2,3\}, w\in\{1,2\}^*\}$. Given $ l\in\{1,2,3\}$ and $w\in\{1,2\}^*$ we denote by $V_{l,w}$ and $U_{l,w}$ the bounded and unbounded, respectively, components of $\R^2 \setminus \overline{R_{l,w}}$.
Further, denote by $X_{l,w}$ the intersection $X_{l,w} = X \cap V_{l,w}$.  

Moreover, if $R_j = R_{l,w}$ we set $\phi_{l,w} = \phi_j$ and $f_{l,w} = f_{j}$. In particular, $f_{l,w} = \phi_{l,w} \circ g_{i(l,w)}$.

By Proposition \ref{prop:main} we have that for all $ l\in\{1,2,3\}$, $w\in\{1,2\}^*$ and $i\in\{1,2\}$
\begin{equation}
\label{eq:chain} 
\frac{\diam{R_{l,wi}}}{\diam{R_{l,w}}} \leq \frac{\diam{V_{l,w}}}{\diam{R_{l,w}}} \leq \frac{\diam{R_{l,w}}-2\dist(V_{l,w},U_{l,w})}{\diam{R_{l,w}}} .
\end{equation}

Suppose $\dist ( V_{l,w},U_{l,w})$ is realized by $|x-y|$. Then since $x,y \in \partial R_{l,w}$ and $f$ is the identity there, we have by \eqref{eq:gj} that for some $j\in \N$,
\begin{align}\label{eq:ratio}
\dist(V_{l,w},U_{l,w}) &= |f(x) - f(y)| \geq \frac{ \diam {R_{l,w}} }{L} |(f_{l,w}^{-1})'(x)-(f_{l,w}^{-1})'(y)| \geq  \frac{ \diam{R_{l,w}}}{L^2}.
\end{align}
We conclude via \eqref{eq:chain} that 
\begin{equation}
\label{eq:shrink} 
\frac{\diam{R_{l,wi}}}{\diam{R_{l,w}}}\leq 1 - \frac{1}{L^2} .
\end{equation}

\subsection{Step 2: Unwinding the Dehn twist in $R_{l,w}$ while acting as isometries on neighbourhoods of $X_{l,w}$.}

For each $l\in\{1,2,3\}$ and $w\in \{1,2\}^*$ we define a bi-Lipschitz path $H_{l,w}:[0,1] \to LIP(\R^2)$ as follows. 

First, set $H_{l,w}| U_{l,w}$ to be the identity. Second, define $H_{l,w} | R_{l,w}$ so that for each $t\in[0,1]$
\[ (H_{l,w} | R_{l,w})_t =  f_{l,w} \circ D_{1-t} \circ (f_{l,w})^{-1}\]
recalling $D_t$ from Lemma \ref{lem:dehntwistpath}. 

\begin{lemma}
\label{lem:gjtwists}
The family of bi-Lipschitz paths 
\[ \mathcal{F} := \left\{ H_{l,w} | R_{l,w} : l\in\{1,2,3\}, w\in\{1,2\}^*\right\},\] 
which unwinds the Dehn twist in each $R_{l,w}$, is a uniform family of bi-Lipschitz paths.
\end{lemma}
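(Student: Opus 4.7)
The plan is to unpack the definition $H_{l,w}|R_{l,w} = f_{l,w} \circ D_{1-t} \circ f_{l,w}^{-1}$ with $f_{l,w} = \phi_{l,w} \circ g_{i(l,w)}$, exploiting two key features: there are only finitely many distinct conformal maps $g_i$, and the similarities $(\phi_{l,w})$ have uniformly bounded scaling factor.

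First, I would invoke Lemma \ref{lem:dehntwistpath} to conclude that the reversed Dehn twist path $t \mapsto D_{1-t}$ is a bi-Lipschitz path on the closed round annulus $B := \overline{B}(0,1) \setminus B(0, 1 - 1/L)$, connecting $D_1 = \mathfrak{D}$ to $D_0 = \text{id}$. Next, for each of the $k$ distinct conformal maps $g_i : B \to g_i(B)$, I would apply Proposition \ref{prop:path} to obtain that $G_i := g_i \circ D_{1-t} \circ g_i^{-1}$ is a bi-Lipschitz path on the ring domain $g_i(B)$. This application requires that $\partial g_i(B)$ be smooth; this follows from the construction in the proof of Proposition \ref{prop:main}, where $g_i$ arises as a rescaled restriction of a Riemann map $\psi_j$ defined on a strictly larger annulus, so $g_i$ extends conformally beyond $\overline{B}$ and both boundary circles are mapped to analytic Jordan curves. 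Since there are only $k$ such paths, the collection $\{G_i\}_{i=1}^{k}$ is trivially a uniform family.

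Second, I would verify that the scaling factors of the similarities $\phi_{l,w}$ are uniformly bounded above. By the normalization in Proposition \ref{prop:main}, the bounded component of $\phi_{l,w}^{-1}(R_{l,w})$ has diameter $1$, so the scaling factor of $\phi_{l,w}$ is comparable to $\diam V_{l,w}$. The shrinking estimate \eqref{eq:shrink} gives geometric decay of $\diam R_{l,w}$ along any branch, and the top-level rings $R_{l,\varepsilon}$ for $l\in\{1,2,3\}$ have bounded diameter since $X \subset \R^2$ is bounded. Hence there is a uniform upper bound $c_0$ on the scaling factors of all the $\phi_{l,w}$. I would then apply Lemma \ref{lem:pathdil}: for each fixed $i$, conjugation of $G_i$ by the similarities $\{\phi_{l,w} : i(l,w) = i\}$ yields a uniform family on the corresponding ring domains $R_{l,w}$. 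The union over the finitely many $i \in \{1,\dots,k\}$ preserves uniformity, so $\mathcal{F}$ is a uniform family. The uniform bound on the isometric distortion of $(H_{l,w}|R_{l,w})_0 = f_{l,w} \circ \mathfrak{D} \circ f_{l,w}^{-1}$ required by Definition \ref{def:unifpath}(i) follows from the $L$-bi-Lipschitz bound on each $g_i$ and the bi-Lipschitz constant of $\mathfrak{D}$, matching the analysis of Lemma \ref{lem:BLmap}.

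The main obstacle I anticipate is the subtlety that the members of $\mathcal{F}$ are paths on genuinely distinct ring domains $R_{l,w}$; the common metric space demanded by Definition \ref{def:unifpath} should be interpreted as the disjoint union $\bigsqcup_{l,w} \overline{R_{l,w}}$ embedded in $\R^2$, with each path extended by identity on the other rings. A second delicate point is verifying the smoothness of $\partial g_i(B)$ needed to invoke Proposition \ref{prop:path}, which is why I emphasized the conformal extendability of $g_i$ to a strictly larger annulus.
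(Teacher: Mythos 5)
Your proposal follows essentially the same route as the paper's proof: invoke Lemma \ref{lem:dehntwistpath} for the core Dehn-twist path, use Proposition \ref{prop:path} to conjugate by the finitely many $g_i$, establish a uniform upper bound on the scaling factors of $\phi_{l,w}$, apply Lemma \ref{lem:pathdil}, and conclude by taking a finite union of uniform families. Your remarks on verifying smoothness of $\partial g_i(B)$ and on the metric-space bookkeeping are correct extra details that the paper leaves implicit, but they do not change the argument.
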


\begin{proof}
For each $i \in\{1,\dots,k\}$ and each $t\in [0,1]$ set $H^i_t = g_i\circ D_{1-t} \circ g_i^{-1}$. By Proposition \ref{prop:path}, each $H^i$ is a bi-Lipschitz path. Now for each $i \in\{1,\dots,k\}$ let
\[ \mathcal{G}^i = \left\{\phi_{l,w} \circ H^i \circ (\phi_{l,w})^{-1} :l\in\{1,2,3\}, w\in\{1,2\}^*\right\}.\]
Since $X$ is bounded, there exists $c$ depending on the diameter of $X$ such that each $\phi_{l,w}$ has a scaling factor at most $c$. Therefore, by Lemma \ref{lem:pathdil}, $\mathcal{G}^i$ is a uniform family of bi-Lipschitz paths. Note that $\mathcal{F} \subset \bigcup_{i=1}^k \mathcal{G}^i$ so $\mathcal{F}$ is a uniform family of bi-Lipschitz paths as a finite union of uniform families of bi-Lipschitz paths.
\end{proof}

Before defining $H_{l,w}|V_{l,w}$ we make some remarks. 

First, there exist $C>0$ and $s \in (0,1)$ such that for any $l\in\{1,2,3\}$ and $w\in\{1,2\}^*$ the set $\phi_{l,w}^{-1}(X_{l,w})$ is $(C,s)$-homogeneous.

Second, since $\{\phi^{-1}_{l,w}(V_{l,w})\}_{l,w}$ is a finite collection of Jordan domains with smooth boundary, there exists $c>1$ such that for all $ l\in\{1,2,3\}$ and $w \in\{1,2\}^*$, the domain $\phi^{-1}_{l,w}(V_{l,w})$ satisfies (\ref{eq:cigar}) with constant $c$. 

Third, by the bi-Lipschitz Schoenflies Theorem \cite[Theorem A]{TukiaBL}, there exists $L'>1$ depending only on $L$ such that every $g_i$ extends to be an $L'$-bi-Lipschitz map on $\overline{B}(0,1)$. Therefore, for each  $ l\in\{1,2,3\}$ and $w \in\{1,2\}^*$ there exists a disk $B_{l,w} \subset \phi^{-1}_{l,w}(V_{l,w})$ such that 
\[ \text{radius}(B_{l,w}) \geq \e \quad\text{and}\quad \dist(B_{l,w} , \partial \phi_{l,w}^{-1}(V_{l,w})) \geq \e \]
with $\e:= (2L')^{-1}(1-L^{-1})$.

Fourth, by \eqref{eq:ringdist}, there exists $\eta>0$ such that for all $l\in\{1,2,3\}$ and $w\in\{1,2\}^*$
\[ \dist (\phi_{l,w}^{-1}(X_{l,w}) , \partial \phi_{l,w}^{-1}(V_{l,w})) \geq  \eta \diam{ \phi_{l,w}^{-1}(V_{l,w}) } \geq \eta .\]

Let $\d$ be the constant given in (\ref{eq:delta}) depending only on $C,s,c,\eta,\e$ above.
Recall from the proof of Proposition \ref{prop:main} that for all $ l\in\{1,2,3\}$ and $w \in\{1,2\}^*$, $\phi_{l,w}^{-1}(V_{l,w})\subset [-1-\frac32\e_0,1+\frac32\e_0]^2$. Therefore, there exist at most $k_1$ different configurations for $\mathcal{T}_{\d}(\phi^{-1}_{l,w}(X_{l,w}))$ inside $\phi^{-1}_{l,w}(V_{l,w})$. Applying Proposition \ref{prop:1} for each of these finitely many cases we obtain bi-Lipschitz paths $\{H_1,\dots,H_{k_2}\}$ such that for each $l\in\{1,2,3\}$ and $w\in\{1,2\}^*$, there exists $j(l,w) \in\{1,\dots,k_2\}$ for which
\begin{enumerate}
\item $H_{j(l,w)}:[0,1] \to LIP(\phi_{l,w}^{-1}(V_{l,w}))$,
\item $H_{j(l,w)}$ is an isometry on each component of $\mathcal{T}_{\d}(\phi^{-1}_{l,w}(X_{l,w}))$,
\item $(H_{j(l,w)})_1$ maps $\mathcal{T}_{\d}(\phi^{-1}_{l,w}(X_{l,w}))$ onto $B_{l,w}$.
\end{enumerate}

By \eqref{eq:shrink}  there exists $p\in \N$, so that if $u\in \{1,2 \}^p$ then
\begin{equation*}
 R_{l,wu} \subset \phi_{l,w}\left(\mathcal{T}_{\d}(\phi^{-1}_{l,w}(X_{l,w})) \right).
\end{equation*}

We define $H_{l,w}|V_{l,w}$ as follows.
\begin{enumerate}[(a)]
\item For $0\leq t\leq 1/3$, we set 
\[ (H_{l,w}|V_{l,w})_t = \phi_{l,w}\circ (H_{i(l,w)})_{3t} \circ \phi^{-1}_{l,w}\] 
to be the path which moves $\phi_{l,w}\left(\mathcal{T}_{\d}(\phi^{-1}_{l,w}(X_{l,w}))\right)$ into the disk $\phi_{l,w}(B_{l,w})$.
\item For $2/3 \leq t \leq 1$, we set 
\[ (H_{l,w}|V_{l,w})_t = (H^l_w|V_{l,w})_{1-t}.\]
\item For $1/3 \leq t \leq 2/3$, we define $H_{l,w}|V_{l,w}$ as a path of rotations. Fix $l,w$ and suppose that $B_{l,w} = B(z_0,r)$. 
Find a conformal map 
\[ \psi_{l,w} : \phi^{-1}_{l,w}(V_{l,w}) \setminus \overline{ B_{l,w}} \to \{z : 1 < |z| < \rho_{l,w} \}\] 
for some $\rho_{l,w} >1$. Since the boundary $\phi^{-1}_{l,w}(V_{l,w})$ is smooth, $\psi_{l,w}$ extends smoothly on $\partial \phi^{-1}_{l,w}(V_{l,w})$. We apply Proposition \ref{prop:interp} with $P,Q$ given by
\begin{align*}
Q_t(z) &= \psi_{l,w}\circ \phi^{-1}_{l,w}\circ (H_{l,w}|\partial V_{l,w})(z)\\
P_t(z) &= \psi_{l,w}(z_0 + (z-z_0)e^{2\pi  (1-3t)i)}).
\end{align*}
Here $H_{l,w}|\partial V_{l,w}$ agrees with $H_{l,w}$ on the inner boundary component of $R_{l,w}$, recalling the construction in Lemma \ref{lem:gjtwists}.
This yields a bi-Lipschitz path 
\[P_{l,w}:[0,1] \to LIP(\{z : 1 \leq  |z| \leq \rho_{l,w} \}).\] 
By Proposition \ref{prop:path}, $G_{l,w} := (\psi^l_w)^{-1} \circ P^l_w \circ \psi^l_w$ is a bi-Lipschitz path. Since there are finitely many different pairs $(\phi^{-1}_{l,w}(V_{l,w}), B_{l,w})$, the set 
\[ \{G_{l,w} : l\in\{1,2,3\},w\in\{1,2\}^*\}\] 
is finite. Set now for $1/3 \leq t \leq 2/3$,
\[ (H_{l,w}|V_{l,w})_{t} = \phi_{l,w} \circ (G_{l,w})_{3t-1}\circ \phi^{-1}_{l,w}.\]
\end{enumerate}

By the finiteness of the family $\{G_{l,w}\}_{l,w}$, and working as in Lemma \ref{lem:gjtwists}, we see that $\{H_{l,w}|V_{l,w} : l\in\{1,2,3\},w\in\{1,2\}^*\}$ is a uniform family of bi-Lipschitz paths. 

By Lemma \ref{lem:partition}, $\{H_{l,w} : l\in\{1,2,3\},w\in\{1,2\}^*\}$ is a uniform family of bi-Lipschitz paths.
The key point in the construction of $H_{l,w}$ is that it unwinds the Dehn twist in $R_{l,w}$ and acts as an isometry on $R_{l,{wu}}$ for any $u\in \{1,2\}^p$.

\subsection{Step 3: Composing unwindings in a controlled way.}
The next step is to combine the paths $H_{l,w}$ defined above. Let $k \in \{0,1,\ldots, p-1 \}$. Define 
\begin{equation*}
(F^k_0)_t(z) = 
\begin{cases}
(H_{l,w})_t(z) & \text{$z\in  R_{l,w} \cup V_{l,w}$ for $|w| = k$, $l \in \{1,2,3\}$,} \\
z & \text{otherwise.} \end{cases}
\end{equation*}
This is a bi-Lipschitz path.
For example, for $k=0$, this path unwinds the Dehn twists in the three outermost rings $R_{1,\epsilon}, R_{2,\epsilon}, R_{3,\epsilon}$. Then for $j\in \N$, suppose that $F^k_{j-1}$ has been defined. We then define
\begin{equation}
\label{eq:fkj}
(F^k_j)_t(z) = 
\begin{cases}
(F^k_{j-1})_t \circ ( H_{l,w})_t (z), & \text{$z\in R_{l,w}\cup V_{l,w}$, $|w| = k+jp$, $l\in \{1,2,3\}$,}\\
(F^k_{j-1})_t(z), & \text{otherwise.} \end{cases}
\end{equation}
If $|w| = k+jp$ and $l \in \{1,2,3\}$, then $(F^k_{j-1})_t$ acts as an isometry on $R_{l,w} \cup V_{l,w}$. 
Hence Lemma \ref{lem:pathcomp} implies that the composition in \eqref{eq:fkj} gives a bi-Lipschitz path, and we conclude that $F^k_j$ is a bi-Lipschitz path which unwinds the Dehn twists in $R_{l,w}$ for $|w| = k, k+p, k+2p,\ldots, k+jp$ and $l=1,2,3$.

\subsection{Step 4: Taking a limit.}
Set $F^k$ by $(F^k)_t = \lim_{j\to \infty} (F^k_j)_t$ for all $t\in [0,1]$. We claim that $F^k$ is a bi-Lipschitz path. To that end, first consider, for $n\in \N$, the domain
\[ \mathcal{U}_n := \bigcup_{\substack{|w| = k+np\\ l \in \{1,2,3\}}} U_{l,w}.\]
By construction, on this set we have $F^k|\mathcal{U}_n = F^k_n|\mathcal{U}_n$, and hence $F^k|\mathcal{U}_n$ is a bi-Lipschitz path. 

Next, note from \eqref{eq:fkj} that $F^k_j$ is obtained from $F^k_{j-1}$ by modifications from a uniform family of bi-Lipschitz paths (namely, the family $\{H_{l,w}\}_{l,w}$) on a region where $F^k_{j-1}$ acts as a family of isometries in a uniform way. By Lemma \ref{lem:pathcomp}, it follows that the family 
\[ \{ F^k_j|\bigcup_{n\in \N} \mathcal{U}_n : j\in \N \}\] 
is a uniform family of bi-Lipschitz paths. Hence $F^k|\bigcup_{n\in \N } \mathcal{U}_n$ is a bi-Lipschitz path.

Since $\bigcup_{n\in \N } \mathcal{U}_n = \R^2 \setminus X$, an application of Proposition \ref{prop:remov} shows that $F^k$ is in fact a bi-Lipschitz path on all of $\R^2$ which unwinds the Dehn twists in $R_{l,w}$ for $|w| \in k+p\N$, $l = 1,2,3$. Hence the concatenation of the finitely many paths $F^0, F^1, \cdots, F^{p-1}$ yields a bi-Lipschitz path which connects $f$ to the identity.

\section{A decomposable multitwist with singular set of large Assouad dimension}\label{sec:example}

Let $D_{\varepsilon}$ be the rectangle $[-\sqrt{2},\sqrt{2}]\times[-1,1]$, let $\alpha \in (0,1)$ and let
\begin{align*}
D_1 &= [-\sqrt{2}(1-\tfrac12\a), -\sqrt{2}\tfrac12\a] \times [\a-1,1-\a]\\
D_2 &= [\sqrt{2}\tfrac12\a, \sqrt{2}(1-\tfrac12\a)] \times [\a-1,1-\a].
\end{align*}
as in Figure \ref{fig:6}. Here $\varepsilon$ denotes the empty word. For each $i\in\{1,2\}$ let $\phi_i$ be the similarity of $\R^2$ mapping $D_{\varepsilon}$ onto $D_i$ with scaling factor $\frac1{\sqrt{2}}(1-\alpha)$. Let $X$ be the Cantor set attractor of the iterated function system $\{\phi_1,\phi_2\}$. 

\begin{figure}[h]
\begin{center}
\includegraphics[scale=6]{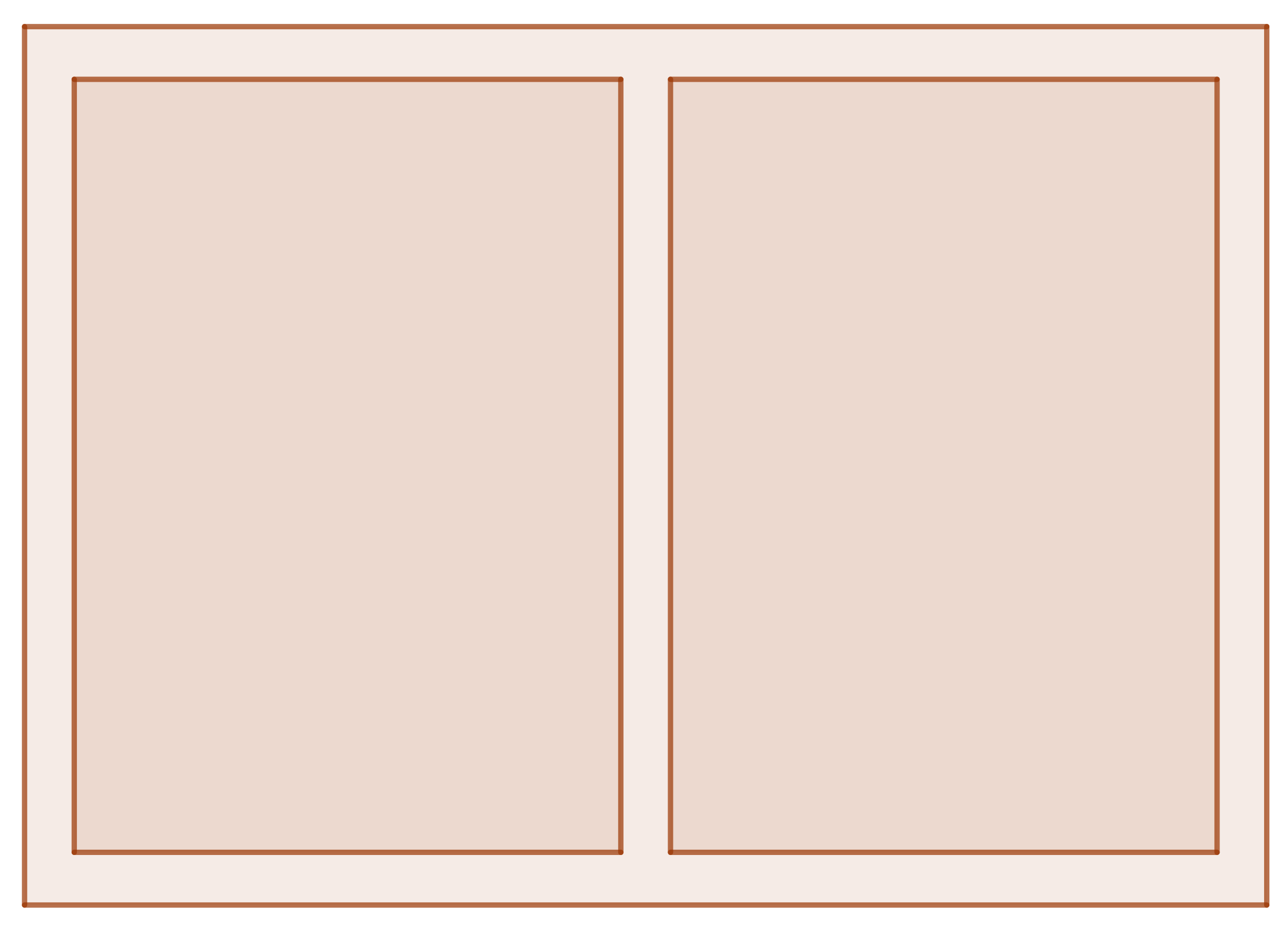}
\caption{The first two steps in the construction of $X$.}
\label{fig:6}
\end{center}
\end{figure}

By self-similarity, $X$ is uniformly disconnected and its Assouad dimension is 
\[ \dim_A(X) = \frac{\log{2}}{\log{\sqrt{2}}-\log(1-\alpha)}\]
which is greater than 1 when $\alpha$ is sufficiently small. Moreover, there exists a multitwist bi-Lipschitz map $f$ as in Section \ref{sec:f}, and by self-similarity, the set of maps $\{g_j\}$ in Proposition \ref{prop:main} contains one single element.

We claim that the map $f$ is decomposable. To prove the claim, we follow the arguments in Section \ref{sec:proof}. We may assume that the domains $\{V_{l,w}\}$ are exactly the interiors of the rectangles $\{D_w\}$. For simplicity, we drop the index $l$. The only step in the proof that we need to check (and the only one that requires the assumption on the Assouad dimension) is the existence of bi-Lipschitz paths $H_{j(w)}$. Since the collection $\{g_j\}$ contains only one element, we only need to construct for each $\e>0$ a ``collapsing'' bi-Lipschitz path $H: [0,1] \to D_{\varepsilon}$ which, for some small $\delta>0$, is an isometry on each component of $\mathcal{T}_{\delta}(X)$ and maps $\mathcal{T}_{\delta}(X)$ into a ball $B$ in $D_{\varepsilon}$ of radius $\e$.

We give a rough sketch of the construction of $H$ and leave the details to the reader. Fix $\e>0$. Choose $\beta\in (0,1)$ such that $(1-\a)(1+\beta)<1$ and choose $n\in\N$ such that 
\[  (1+\beta)^{n}(1-\a)^n< \tfrac14\e.\] 
The bi-Lipschitz path $H$ is a concatenation of $n$ bi-Lipschitz paths $H_1,\dots,H_n$. Let $H_1$ be the bi-Lipschitz path that is identity outside of $\bigcup_{w\in \{1,2\}^{n-1}}D_w$ and for each $w\in\{1,2\}^{n-1}$, it moves $D_{w1}$ towards $D_{w2}$ so that they both end up in a rectangle $D_w'$ with sides parallel to the axes and side-lengths 
\[ 4(1+\beta)(\tfrac1{\sqrt{2}}(1-\a))^{n}, \quad 2\sqrt{2}(1+\beta)(\tfrac1{\sqrt{2}}(1-\a))^{n}.\] 
The choice of $\beta$ ensures that $D_w'$ is contained in $D_w$. Moreover, $H_1$ acts as an isometry on $D_{wi}$ for all $wi\in\{1,2\}^{n}$.

Assume now that for some $m\in \{1,\dots,n-1\}$ we have defined the paths $H_1,\dots,H_m$ and assume that 
\begin{enumerate}
\item the concatenation of these paths  is the identity outside of $\bigcup_{w\in \{1,2\}^{n-m}} D_w$,
\item for each $w\in  \{1,2\}^{n-m}$, the concatenation has moved $X\cap D_w$ inside a rectangle $D_{w}' \subset D_w$ with sides parallel to the axes and side-lengths 
\[ 2\sqrt{2}(\sqrt{2})^{m}(1+\beta)^{m}(\tfrac1{\sqrt{2}}(1-\a))^{n}, \qquad 2(\sqrt{2})^{m}(1+\beta)^{m}(\tfrac1{\sqrt{2}}(1-\a))^{n},\]
\item for each $u\in\{1,2\}^n$  the concatenation of these paths acts as an isometry on $D_u$.
\end{enumerate}
Let $H_{m+1}$ be the bi-Lipschitz path that is identity outside of $\bigcup_{w\in \{1,2\}^{n-m-1}}D_w$ and for each $w\in\{1,2\}^{n-m-1}$, it moves $D_{w1}'$ towards $D_{w2}'$ so that they both end up in a rectangle $D_w'$ with sides parallel to the axes and side-lengths 
\[ 2\sqrt{2}(\sqrt{2})^{m+1}(1+\beta)^{m+1}(\tfrac1{\sqrt{2}}(1-\a))^{n}, \qquad 2(\sqrt{2})^{m+1}(1+\beta)^{m+1}(\tfrac1{\sqrt{2}}(1-\a))^{n}.\] 
Note that $H_{m+1}$ acts as an isometry on $D_{u}$ for all $u\in\{1,2\}^{n}$.

Finally, the concatenation $H$ of paths $H_1,\dots,H_n$ is the identity outside of $D$, acts as an isometry on $D_{u}$ for all $u\in\{1,2\}^{n}$, and $H(X)$ is contained in a rectangle $D' \subset D$ with side-lengths 
\[ 2\sqrt{2}(1+\beta)^{n}(1-\a)^n, \quad 2(1+\beta)^{n}(1-\a)^n.\] 
By the choice of $n$, the rectangle $D'$ has diameter less than $\e$ and the proof  is complete.

\bibliography{UD-bib}

\newcommand{\etalchar}[1]{$^{#1}$}
\providecommand{\bysame}{\leavevmode\hbox to3em{\hrulefill}\thinspace}
\providecommand{\MR}{\relax\ifhmode\unskip\space\fi MR }
\providecommand{\MRhref}[2]{%
  \href{http://www.ams.org/mathscinet-getitem?mr=#1}{#2}
}
\providecommand{\href}[2]{#2}
\begin{thebibliography}{ALP{\etalchar{+}}11}

\bibitem[AIPS15]{AIPS}
K.~Astala, T.~Iwaniec, I.~Prause, and E.~Saksman, \emph{Bilipschitz and
  quasiconformal rotation, stretching and multifractal spectra}, Pub. Math.
  IH\'ES \textbf{121} (2015), 113--154.

\bibitem[ALP{\etalchar{+}}11]{ALPSS}
Daniele Alessandrini, Lixin Liu, Athanase Papadopoulos, Weixu Su, and Zongliang
  Sun, \emph{On {F}enchel-{N}ielsen coordinates on {T}eichm\"{u}ller spaces of
  surfaces of infinite type}, Ann. Acad. Sci. Fenn. Math. \textbf{36} (2011),
  no.~2, 621--659. \MR{2865518}

\bibitem[BK87]{BeKr}
Steven~R. Bell and Steven~G. Krantz, \emph{Smoothness to the boundary of
  conformal maps}, Rocky Mount. J. Math. \textbf{17} (1987), no.~1, 23--40.

\bibitem[BM07]{BM}
A.~F. Beardon and D.~Minda, \emph{The hyperbolic metric and geometric function
  theory}, Quasiconformal mappings and their applications, Narosa, New Delhi,
  2007, pp.~9--56. \MR{2492498}

\bibitem[DS97]{DSbook}
Guy David and Stephen Semmes, \emph{Fractured fractals and broken dreams},
  Oxford Lecture Series in Mathematics and its Applications, vol.~7, The
  Clarendon Press, Oxford University Press, New York, 1997, Self-similar
  geometry through metric and measure. \MR{1616732}

\bibitem[FH88]{FrHe}
M.~Freedman and Z.-X. He, \emph{Factoring the logarithmic spiral}, Invent.
  Math. \textbf{92} (1988), no.~1, 129--138.

\bibitem[FM12]{FM}
Alastair Fletcher and Vladimir Markovic, \emph{Decomposing diffeomorphisms of
  the sphere}, Bull. Lond. Math. Soc. \textbf{44} (2012), no.~3, 599--609.
  \MR{2967005}

\bibitem[GM01]{GuMa}
V.~Gutlyanskii and O.~Martio, \emph{Rotation estimates and spirals}, Conform.
  Geom. Dyn. \textbf{5} (2001), 6--20.

\bibitem[GM05]{GM}
John~B. Garnett and Donald~E. Marshall, \emph{Harmonic measure}, New
  Mathematical Monographs, vol.~2, Cambridge University Press, Cambridge, 2005.
  \MR{2150803}

\bibitem[Leh87]{Lehto}
Olli Lehto, \emph{Univalent functions and {T}eichm\"{u}ller spaces}, Graduate
  Texts in Mathematics, vol. 109, Springer-Verlag, New York, 1987. \MR{867407}

\bibitem[Loe59]{Loewner}
Charles Loewner, \emph{On the conformal capacity in space}, J. Math. Mech.
  \textbf{8} (1959), 411--414. \MR{0104785}

\bibitem[Luu98]{Lu}
Jouni Luukkainen, \emph{Assouad dimension : antifractal metrization, porous
  sets, and homogeneous measures}, J. Kor. Math. Soc. \textbf{35} (1998),
  no.~1, 23--76.

\bibitem[Mac99]{MM2}
Paul MacManus, \emph{Catching sets with quasicircles}, Rev. Mat. Iberoamericana
  \textbf{15} (1999), no.~2, 267--277. \MR{1715408}

\bibitem[Mas85]{Ma85}
B.~Maskit, \emph{Comparison of hyperbolic and extremal lengths}, Ann. Acad.
  Sci. Fenn. \textbf{10} (1985), 381--386.

\bibitem[Pom79]{Po79}
C.~Pommerenke, \emph{Uniformly perfect sets and the poincar\'e metric}, Arch.
  Math. \textbf{32} (1979), 192--199.

\bibitem[Pom92]{Pomm}
Ch. Pommerenke, \emph{Boundary behaviour of conformal maps}, Grundlehren der
  Mathematischen Wissenschaften [Fundamental Principles of Mathematical
  Sciences], vol. 299, Springer-Verlag, Berlin, 1992. \MR{1217706}

\bibitem[Shi18]{Shiga}
H.~Shiga, \emph{On the quasiconformal equivalence of dynamical cantor sets},
  arXiv:1812.07785 (2018).

\bibitem[Tuk80]{TukiaBL}
Pekka Tukia, \emph{The planar {S}ch\"{o}nflies theorem for {L}ipschitz maps},
  Ann. Acad. Sci. Fenn. Ser. A I Math. \textbf{5} (1980), no.~1, 49--72.
  \MR{595177}

\bibitem[Vel21]{Vext}
Vyron Vellis, \emph{Uniformization of {C}antor sets with bounded geometry},
  Conform. Geom. Dyn. \textbf{25} (2021), 88--103. \MR{4298216}

\bibitem[Wol81]{Wolpert}
Scott Wolpert, \emph{The length spectrum as moduli for compact {R}iemann
  surfaces}, Riemann surfaces and related topics: {P}roceedings of the 1978
  {S}tony {B}rook {C}onference ({S}tate {U}niv. {N}ew {Y}ork, {S}tony {B}rook,
  {N}.{Y}., 1978), Ann. of Math. Stud., vol.~97, Princeton Univ. Press,
  Princeton, N.J., 1981, pp.~515--517. \MR{624836}

\end{thebibliography}
\bibliographystyle{amsbeta}

\end{document}